\documentclass[12pt,a4paper]{amsart}
\usepackage{amssymb}

\textwidth=36pc
\calclayout

\pagestyle{plain}
\raggedbottom

\emergencystretch=2em

\newcommand{\+}{\protect\nobreakdash-}

\renewcommand{\:}{\colon}

\newcommand{\rarrow}{\longrightarrow}

\newcommand{\ot}{\otimes}

\newcommand{\lrarrow}{\mskip.5\thinmuskip\relbar\joinrel\relbar\joinrel
 \rightarrow\mskip.5\thinmuskip\relax}

\newcommand{\bu}{{\text{\smaller\smaller$\scriptstyle\bullet$}}}

\DeclareMathOperator{\Hom}{Hom}
\DeclareMathOperator{\Ext}{Ext}
\DeclareMathOperator{\Tor}{Tor}

\DeclareMathOperator{\id}{id}

\DeclareMathOperator{\cone}{cone}

\newcommand{\Modl}{{\operatorname{\mathsf{--Mod}}}}
\newcommand{\Modr}{{\operatorname{\mathsf{Mod--}}}}
\newcommand{\Modrfl}{{\operatorname{\mathsf{Mod_{fl}--}}}}
\newcommand{\Modrproj}{{\operatorname{\mathsf{Mod_{proj}--}}}}
\newcommand{\Modrprojfp}{{\operatorname{\mathsf{Mod_{proj}^{fp}--}}}}

\newcommand{\Sets}{\mathsf{Sets}}
\newcommand{\Ab}{\mathsf{Ab}}
\newcommand{\Funct}{\mathsf{Funct}}
\newcommand{\Hot}{\mathsf{Hot}}
\newcommand{\Ac}{\mathsf{Ac}}
\newcommand{\Fil}{\mathsf{Fil}}
\newcommand{\PInj}{\mathsf{PInj}}
\newcommand{\PProj}{\mathsf{PProj}}
\newcommand{\Inj}{\mathsf{Inj}}
\newcommand{\Proj}{\mathsf{Proj}}
\newcommand{\Cot}{\mathsf{Cot}}
\newcommand{\FpProj}{\mathsf{FpProj}}
\newcommand{\inj}{\mathsf{inj}}
\newcommand{\proj}{\mathsf{proj}}

\newcommand{\fp}{\mathsf{fp}}
\newcommand{\sfp}{{\operatorname{\mathsf{s-fp}}}}
\newcommand{\wfp}{{\operatorname{\mathsf{w-fp}}}}
\newcommand{\pur}{\mathsf{pur}}
\newcommand{\ad}{\mathsf{ad}}

\newcommand{\sop}{{\mathsf{op}}}

\newcommand{\cL}{\mathcal L}
\newcommand{\cR}{\mathcal R}

\newcommand{\sA}{\mathsf A}
\newcommand{\sB}{\mathsf B} 
\newcommand{\sC}{\mathsf C}
\newcommand{\sD}{\mathsf D}
\newcommand{\sE}{\mathsf E}
\newcommand{\sF}{\mathsf F}

\newcommand{\sK}{\mathsf K}

\newcommand{\sP}{\mathsf P}

\newcommand{\sS}{\mathsf S}
\newcommand{\sT}{\mathsf T}

\newcommand{\boZ}{\mathbb Z}
\newcommand{\boQ}{\mathbb Q}

\newcommand{\Section}[1]{\bigskip\section{#1}\medskip}
\setcounter{tocdepth}{1}

\theoremstyle{plain}
\newtheorem{thm}{Theorem}[section]

\newtheorem{lem}[thm]{Lemma}
\newtheorem{prop}[thm]{Proposition}
\newtheorem{cor}[thm]{Corollary}
\theoremstyle{definition}
\newtheorem{ex}[thm]{Example}

\newtheorem{rem}[thm]{Remark}

\begin{document}

\title{Fp-projective periodicity}

\author[S.~Bazzoni]{Silvana Bazzoni}

\address[Silvana Bazzoni]{%
Dipartimento di Matematica ``Tullio Levi-Civita'' \\
Universit\`a di Padova \\
Via Trieste 63, 35121 Padova \\ Italy}

\email{bazzoni@math.unipd.it}

\author[M.~Hrbek]{Michal Hrbek}

\address[Michal Hrbek]{Institute of Mathematics, Czech Academy
of Sciences, \v Zitn\'a 25, 115\,67 Prague~1, Czech Republic}

\email{hrbek@math.cas.cz}

\author[L.~Positselski]{Leonid Positselski}

\address[Leonid Positselski]{%
Institute of Mathematics, Czech Academy of Sciences \\
\v Zitn\'a~25, 115~67 Praha~1, Czech Republic}

\email{positselski@math.cas.cz}

\begin{abstract}
 The phenomenon of periodicity, discovered by Benson and Goodearl,
is linked to the behavior of the objects of cocycles in
acyclic complexes.
 It is known that any flat $\Proj$\+periodic module is projective,
any fp\+injective $\Inj$\+periodic module is injective, and any
$\Cot$\+periodic module is cotorsion.
 It is also known that any pure $\PProj$\+periodic module is
pure-projective and any pure $\PInj$\+periodic module is pure-injective.
 Generalizing a result of \v Saroch and \v St\!'ov\'\i\v cek, we show
that every $\FpProj$\+periodic module is weakly fp\+projective.
 The proof is quite elementary, using only a strong form of
the pure-projective periodicity and the Hill lemma.
 More generally, we prove that, in a locally finitely presentable
Grothendieck category, every $\FpProj$\+periodic object is weakly
fp\+projective.
 In a locally coherent category, all weakly fp\+projective objects are
fp\+projective.
 We also present counterexamples showing that a non-pure
$\PProj$\+periodic module over a regular finitely generated commutative
algebra (or a hereditary finite-dimensional associative algebra)
over a field need not be pure-projective.
\end{abstract}

\maketitle

\tableofcontents

\section*{Introduction}
\medskip

\setcounter{subsection}{-1}
\subsection{{}} \label{introd-periodic-defined-subsecn}
 Any acyclic bounded above complex of projective modules is
contractible.
 The same applies to acyclic bounded below complexes of injective
modules.
 For unbounded\- complexes, these assertions no longer hold.

 Here is the classical counterexample.
 Let $k$~be a field, and let $\Lambda=k[\epsilon]/(\epsilon^2)$ be
the ring of dual numbers, or which is the same, the exterior algebra
with one generator over~$k$.
 Consider the acyclic, unbounded complex of projective-injective
$\Lambda$\+modules
\begin{equation} \label{unbounded-acyclic-classical}
 \dotsb\lrarrow \Lambda\overset{\epsilon*}\lrarrow\Lambda
 \overset{\epsilon*}\lrarrow\Lambda \lrarrow\dotsb
\end{equation}
 Here every term of the complex~\eqref{unbounded-acyclic-classical}
is the free $\Lambda$\+module with one generator; all the differentials
in~\eqref{unbounded-acyclic-classical} are the operators of
multiplication with~$\epsilon$.
 One can easily see that the complex~\eqref{unbounded-acyclic-classical}
is not contractible.

 From the homological algebra perspective, \emph{periodicity theorems}
constitute one of the several presently known technical approaches
designed to overcome the difficulties represented by complexes such
as~\eqref{unbounded-acyclic-classical}.
 Other approaches include such concepts as
the \emph{homotopy projective} and \emph{homotopy injective}
complexes~\cite{Spal}, the \emph{coderived} and
\emph{contraderived categories}~\cite[Section~7]{Pksurv}, etc.

 Let $\sK$ be an abelian category, and let $\sA\subset\sK$ be a class
of objects.
 An object $M\in\sK$ is said to be \emph{$\sA$\+periodic} if there
exists a short exact sequence $0\rarrow M\rarrow A\rarrow M\rarrow0$
in $\sK$ with $A\in\sA$.
 For example, the acyclic complex~\eqref{unbounded-acyclic-classical}
demonstrates the fact that the irreducible $\Lambda$\+module~$k$ is
periodic with respect to the class of all projective-injective
$\Lambda$\+modules, while \emph{not} belonging to this class.

 More generally, one can let $\sK$ be an exact category (in Quillen's
sense), and consider conflations instead of the short exact sequences.
 A typical \emph{periodicity theorem} has the form: for some two
classes of objects $\sA\subset\sE\subset\sK$, any $\sA$\+periodic
object belonging to $\sE$ belongs to~$\sA$.

\subsection{{}} \label{introd-module-periodicity-theorems-subsecn}
 The subject of periodicity originates from the seminal paper of
Benson and Goodearl~\cite{BG}, where the following theorem was proved.

\begin{thm}[Benson and Goodearl~{\cite[Theorem~2.5]{BG}}]
\label{benson-goodearl-flat-projective}
 Let $R$ be a ring; denote by\/ $\Proj$ the class of all projective
$R$\+modules.
 Then any flat\/ $\Proj$\+periodic $R$\+module is projective.
\end{thm}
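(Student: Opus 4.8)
The plan is to reduce the statement to the already known \emph{pure}-projective periodicity, using the classical fact that a flat pure-projective module is projective. So let $M$ be a flat $R$\+module and fix a short exact sequence $0\rarrow M\overset{\alpha}{\rarrow}P\overset{\beta}{\rarrow}M\rarrow0$ with $P$ projective. The first point is that this sequence is automatically \emph{pure} exact: for every right $R$\+module $N$, tensoring with $N$ keeps it exact, since the only possible obstruction lies in $\Tor_1^R(N,M)$, which vanishes because $M$ is flat. Equivalently, splicing copies of this sequence yields an acyclic complex $\dotsb\rarrow P\rarrow P\rarrow P\rarrow\dotsb$ of projective modules all of whose modules of cocycles are isomorphic to the flat module $M$; as each cocycle short exact sequence has flat cokernel, this is a pure acyclic complex of projectives.

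Now I would invoke pure-projective periodicity. The module $P$ is projective, hence pure-projective, and $0\rarrow M\rarrow P\rarrow M\rarrow0$ is pure exact, so $M$ is pure $\PProj$\+periodic; by the known pure-projective periodicity theorem, $M$ is pure-projective. (This amounts to saying that the pure acyclic complex of projectives above is contractible.) Finally, a module that is simultaneously flat and pure-projective is projective: choosing a free module $F$ together with an epimorphism $\pi\colon F\rarrow M$, the short exact sequence $0\rarrow\ker\pi\rarrow F\rarrow M\rarrow0$ is pure because $M$ is flat, and it splits because $M$ is pure-projective; thus $M$ is a direct summand of a free module, hence projective. Combining these three steps gives the theorem.

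The step I expect to carry the real weight is pure-projective periodicity itself --- the implication ``$P$ pure-projective and $0\rarrow M\rarrow P\rarrow M\rarrow0$ pure exact $\Rightarrow$ $M$ pure-projective'' --- which is not elementary and is really what the above deduction rests on. If instead one wants an argument that does not cite it, the alternative is the original, more hands-on route: after the usual reduction to the case of a free $P$, one tries to build a splitting $M\rarrow P$ directly, writing $M$ as a direct limit of finitely generated free modules $F_i$ (Lazard) and lifting the structure maps $F_i\rarrow M$ along $\beta$. Each lift exists since $F_i$ is projective, but the lifts need not be compatible with the transition maps; assembling them into a single map $M\rarrow P$, using flatness to control the discrepancies, is the delicate heart of that approach.
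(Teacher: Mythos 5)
Your reduction is correct and is exactly the one the paper itself sketches in Section~0.3 of the introduction: a short exact sequence ending in a flat module is pure, a projective module is pure-projective, and a flat pure-projective module is projective, so flat $\Proj$\+periodicity follows from pure $\PProj$\+periodicity (Simson's theorem). The one caveat worth flagging is logical rather than mathematical: in the paper's own chain of deductions, pure-projective periodicity is obtained \emph{from} the Benson--Goodearl/Neeman theorem by passing to the functor category of finitely presented modules, so your argument is a genuine proof only if one takes as input an independent proof of pure-projective periodicity (as in Simson's paper); you rightly acknowledge that this is where all the weight sits, and that otherwise one must fall back on the original hands-on lifting argument of Benson and Goodearl.
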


 The theorem of Benson and Goodearl had module-theoretic flavor.
 It was rediscovered and strengthened by Neeman~\cite{Neem} in the form
of the following theorem in homological algebra.

\begin{thm}[Neeman~{\cite[Remark~2.15 and Theorem~8.6]{Neem}};
see also Theorem~\ref{neeman-theorem} below]
\label{neeman-flat-projective}
 Let $R$ be a ring.  Then \par
\textup{(a)} any acyclic complex of projective $R$\+modules with flat
modules of cocycles is contractible; \par
\textup{(b)} moreover, if $P^\bu$ is a complex of projective
$R$\+modules and $F^\bu$ is an acyclic complex of flat $R$\+modules
with flat modules of cocycles, then any morphism of complexes
$P^\bu\rarrow F^\bu$ is homotopic to zero.
\end{thm}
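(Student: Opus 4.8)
The plan is to derive both parts from the Benson--Goodearl theorem (Theorem~\ref{benson-goodearl-flat-projective}), reducing the homological assertions to a statement about modules.

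\emph{Part~\textup{(a)}.}
 Let $P^\bu$ be an acyclic complex of projective $R$\+modules, and write $Z^n=Z^n(P^\bu)$ for its modules of cocycles, which are flat by hypothesis; thus there is a short exact sequence $0\rarrow Z^n\rarrow P^n\rarrow Z^{n+1}\rarrow0$ for every $n\in\boZ$.
 Put $W=\bigoplus_{n\in\boZ}Z^n$.
 Summing these sequences over all~$n$ and using the reindexing isomorphism $\bigoplus_n Z^{n+1}\cong W$, one obtains a short exact sequence $0\rarrow W\rarrow\bigoplus_n P^n\rarrow W\rarrow0$ with projective middle term; hence $W$ is a flat $\Proj$\+periodic module, so $W$ is projective by Theorem~\ref{benson-goodearl-flat-projective}.
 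Then each $Z^n$ is a direct summand of~$W$, hence projective, so each sequence $0\rarrow Z^n\rarrow P^n\rarrow Z^{n+1}\rarrow0$ splits; choosing compatible splittings $P^n\cong Z^n\oplus Z^{n+1}$ exhibits $P^\bu$ as a direct sum of two-term contractible complexes (equivalently, gives an explicit contracting homotopy), which proves~\textup{(a)}.

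\emph{Part~\textup{(b)}.}
 Given a complex $P^\bu$ of projective modules, an acyclic complex $F^\bu$ of flat modules with flat cocycles, and a chain map $f\colon P^\bu\rarrow F^\bu$, the plan is to bootstrap from~\textup{(a)}.
 First cover $F^\bu$ by a contractible complex of projectives: choosing surjections $Q^n\twoheadrightarrow Z^n(F^\bu)$ from projective modules, the complex $\widehat Q^\bu$ with terms $\widehat Q^n=Q^n\oplus Q^{n+1}$ and differential $(a,b)\mapsto(b,0)$ admits a degreewise surjective chain map $\rho\colon\widehat Q^\bu\rarrow F^\bu$ assembled from these surjections and from liftings through the epimorphisms $F^n\twoheadrightarrow Z^{n+1}(F^\bu)$.
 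A short $\Tor$\+computation, which is where flatness of the cocycles of $F^\bu$ is used, shows that $K^\bu:=\ker\rho$ is again an acyclic complex of flat modules with flat cocycles.
 Since the $P^n$ are projective, $f$ lifts degreewise through~$\rho$ to maps $g^n\colon P^n\rarrow\widehat Q^n$, and the maps $\phi^n=d_{\widehat Q}g^n-g^{n+1}d_P\colon P^n\rarrow K^{n+1}$ then constitute a chain map $\phi\colon P^\bu\rarrow K^\bu[1]$.
 Chasing the long exact cohomology sequence of the short exact sequence of complexes $0\rarrow\Hom^\bu(P^\bu,K^\bu)\rarrow\Hom^\bu(P^\bu,\widehat Q^\bu)\rarrow\Hom^\bu(P^\bu,F^\bu)\rarrow0$ --- which is exact because the $P^n$ are projective, and whose middle term is acyclic because $\widehat Q^\bu$ is contractible --- shows that $f$ is null-homotopic if and only if $\phi$ is.
 Thus the problem for the pair $(P^\bu,F^\bu)$ is reduced to the same problem for $(P^\bu,K^\bu[1])$.

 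The hard part is to close this loop.
 Iterating the construction produces a resolution $\dotsb\rarrow\widehat Q_2^\bu\rarrow\widehat Q_1^\bu\rarrow F^\bu\rarrow0$ of $F^\bu$ by contractible complexes of projectives, whose successive syzygies $K_i^\bu$ are again acyclic complexes of flats with flat cocycles, together with, for each~$i$, an isomorphism identifying the homotopy class of~$f$ with that of a chain map $P^\bu\rarrow K_i^\bu[i]$.
 Since each $K_i^\bu$ is no simpler than $F^\bu$, one cannot merely pass to a limit; instead I would assemble the cocycle modules of all the complexes $P^\bu$, $\widehat Q_i^\bu$ and $K_i^\bu$, together with the shifts induced by the internal and the resolution differentials, into a single flat $\Proj$\+periodic module, and apply Theorem~\ref{benson-goodearl-flat-projective} once more to split the relevant short exact sequences and manufacture the homotopy --- exactly as in part~\textup{(a)}.
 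Organizing this bookkeeping into a genuine periodic presentation is the delicate step, and it is precisely the point at which the flatness of the cocycles of $F^\bu$ is indispensable --- it is also what the naive argument (totalizing the projective resolution of $F^\bu$ by a spectral sequence) fails to provide for unbounded complexes.
 Alternatively one could argue directly inside the homotopy category of flat modules, as Neeman does; routing everything through Benson--Goodearl keeps part~\textup{(b)} parallel to part~\textup{(a)}.
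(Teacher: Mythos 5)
Your part~(a) is correct and is exactly the standard equivalence the paper alludes to: summing the short exact sequences $0\rarrow Z^n\rarrow P^n\rarrow Z^{n+1}\rarrow 0$ over $n\in\boZ$ produces a flat $\Proj$\+periodic module $\bigoplus_n Z^n$, Benson--Goodearl makes it projective, and the resulting splittings contract the complex. (The paper does not write this out; it cites the elementary observation from \cite{CH,EFI} and, for the theorem as a whole, simply quotes Neeman.)

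Part~(b), however, is not proved. Your reduction step is fine as far as it goes: covering $F^\bu$ by a contractible complex of projectives $\widehat Q^\bu$, checking via $\Tor$ that the kernel $K^\bu$ is again an acyclic complex of flats with flat cocycles, and using the long exact sequence of $\Hom$\+complexes to identify the homotopy class of $f$ with that of a map $P^\bu\rarrow K^\bu[1]$ --- all of this is correct. But, as you yourself note, it replaces the problem by an isomorphic copy of itself and never terminates. The proposed way of ``closing the loop'' --- assembling ``the cocycle modules of all the complexes $P^\bu$, $\widehat Q_i^\bu$ and $K_i^\bu$'' into a single flat $\Proj$\+periodic module --- is not carried out, and it cannot work as described: $P^\bu$ is an \emph{arbitrary} complex of projectives, not an acyclic one, so it has no distinguished flat ``modules of cocycles'' to feed into a periodicity argument, and no candidate short exact sequence $0\rarrow W\rarrow A\rarrow W\rarrow 0$ is ever exhibited. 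This is not a minor bookkeeping issue. Part~(b) is genuinely stronger than part~(a) (the paper asserts only the trivial implication (b)$\Rightarrow$(a)), and no elementary deduction of (b) from Benson--Goodearl is known; Neeman's proof of \cite[Theorem~8.6]{Neem}, to which the paper defers, and the alternative arguments of \v St\!'ov\'\i\v cek rest on substantially more machinery (the structure of the homotopy category of flat modules, respectively deconstructibility/filtration techniques) than the periodicity theorem plus a diagram chase. As it stands, your write-up establishes (a) but only reformulates (b).
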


 Part~(a) of Theorem~\ref{neeman-flat-projective} is obviously
a particular case of part~(b) (take $F^\bu=P^\bu$ and consider
the identity morphism $P^\bu\rarrow F^\bu$).
 Theorem~\ref{neeman-flat-projective}(a) is an equivalent restatement
of Theorem~\ref{benson-goodearl-flat-projective}.

 Indeed, for any abelian category $\sK$ with exact functor of countable
coproduct (respectively, product), any class of objects $\sA\subset
\sK$ closed under countable coproducts (resp., products), and any
class of objects $\sB\subset\sK$ closed under direct summands,
the following two conditions are equivalent:
\begin{itemize}
\item every $\sA$\+periodic object in $\sK$ belongs to~$\sB$;
\item in any acyclic complex in $\sK$ with the terms belonging to
$\sA$, the objects of cocycles belong to~$\sB$.
\end{itemize}
 This elementary observation can be found in~\cite[proof of
Proposition~7.6]{CH} or~\cite[Propositions~1 and~2]{EFI}.

 An $R$\+module $J$ is said to be \emph{fp\+injective} if
$\Ext^1_R(T,J)=0$ for all finitely presented $R$\+modules~$T$.
 Fp\+injective modules are also known as \emph{absolutely pure} modules.  
 They are often considered as dual analogues of flat modules.
 Thus the following theorem due to \v St\!'ov\'\i\v cek~\cite{Sto} is
dual-analogous to Theorem~\ref{benson-goodearl-flat-projective}.
 Another proof can be found in the paper of Bazzoni, Cort\'es-Izurdiaga,
and Estrada~\cite{BCE}.

\begin{thm}[essentially
\v St\!'ov\'\i\v cek~{\cite[Corollary~5.5]{Sto}};
see also~{\cite[Theorem~1.2(1) or Proposition~4.8(1)]{BCE}}]
\label{stovicek-fp-injective-inj-periodic-module}
 Let $R$ be a ring; denote by\/ $\Inj$ the class of all injective
$R$\+modules.
 Then any fp\+injective\/ $\Inj$\+periodic $R$\+module is injective.
\end{thm}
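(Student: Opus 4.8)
The plan is to bootstrap from the already-known pure periodicity theorem for pure-injective modules, using the observation that the hypothesis that $M$ be fp\+injective forces the periodicity short exact sequence to be pure.

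Fix a short exact sequence $0\rarrow M\overset{\alpha}\rarrow J\rarrow M\rarrow 0$ with $J$ injective, witnessing that $M$ is $\Inj$\+periodic. The first step is the standard observation that an fp\+injective module is a \emph{pure} submodule of every module containing it. Indeed, let $M\subseteq N$ and let $F$ be a finitely presented module; for any homomorphism $g\:F\rarrow N/M$, the pullback of $0\rarrow M\rarrow N\rarrow N/M\rarrow 0$ along~$g$ is a short exact sequence $0\rarrow M\rarrow P\rarrow F\rarrow 0$, which splits because $\Ext^1_R(F,M)=0$; a splitting provides a lift $F\rarrow N$ of~$g$. Thus $\Hom_R(F,N)\rarrow\Hom_R(F,N/M)$ is surjective for every finitely presented $F$, which is exactly the statement that $M$ is pure in~$N$. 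Applying this with $N=J$ shows that $\alpha$ is a pure monomorphism, so $0\rarrow M\overset{\alpha}\rarrow J\rarrow M\rarrow 0$ is a pure exact sequence.

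Since $J$ is injective it is in particular pure-injective, so this pure exact sequence exhibits $M$ as a pure $\PInj$\+periodic module. By the quoted result that every pure $\PInj$\+periodic module is pure-injective, $M$ is pure-injective. But then the pure monomorphism $\alpha\:M\rarrow J$ splits (pure-injectivity is injectivity relative to pure monomorphisms), so $M$ is a direct summand of the injective module $J$, and hence $M$ is injective.

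I do not expect a genuine obstacle: this is a short two-step reduction, and its only slightly delicate point is to check that the cited pure-injective periodicity theorem really applies here --- that is, that its hypothesis is a pure exact sequence $0\rarrow M\rarrow P\rarrow M\rarrow 0$ with $P$ pure-injective, which is precisely what the purity of~$\alpha$ provides. If one prefers not to invoke that theorem, one can instead splice countably many copies of the pure exact sequence into a pure exact complex of injective modules with all cocycles equal to~$M$ and run the same Hill-lemma bookkeeping used later in this paper for the $\FpProj$\+periodic case; but the reduction above is cleaner.
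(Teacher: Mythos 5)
Your argument is correct and coincides with the paper's own treatment: the paper does not prove this quoted theorem in detail but reduces it, in Section~0.2, to the pure $\PInj$\+periodicity theorem via exactly the observations you make (fp\+injective modules are pure in any extension, injective modules are pure-injective, and a pure-injective fp\+injective module is a direct summand of its injective envelope). The purity step you spell out is the ``only if'' direction of Lemma~\ref{absolutely-pure}, so nothing is missing.
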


 The following homological algebra theorem provides a reformulation
and a stronger version of
Theorem~\ref{stovicek-fp-injective-inj-periodic-module}.

\begin{thm}[essentially \v St\!'ov\'\i\v cek~{\cite[Theorem~5.4 and
Corollary~5.5]{Sto}}; see also~{\cite[Theorem~5.1(1)]{BCE}}
for part~(a)]
\label{stovicek-fp-injective-injective-modules}
 Let $R$ be a ring.  Then \par
\textup{(a)} any acyclic complex of injective $R$\+modules with
fp\+injective modules of cocycles is contractible; \par
\textup{(b)} moreover, if $J^\bu$ is a complex of injective $R$\+modules
and $I^\bu$ is an acyclic complex of fp\+injective $R$\+modules with
fp\+injective modules of cocycles, then any morphism of complexes
$I^\bu\rarrow J^\bu$ is homotopic to zero.
\end{thm}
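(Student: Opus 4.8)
The plan is to obtain part~(a) directly from Theorem~\ref{stovicek-fp-injective-inj-periodic-module} by a product trick, and to treat part~(b) --- which is the exact dual of Theorem~\ref{neeman-flat-projective}(b), i.e. \v St\!'ov\'\i\v cek's Theorem~5.4 of~\cite{Sto} --- by first reducing it to part~(a) and then invoking the cotorsion\+theoretic core of \v St\!'ov\'\i\v cek's argument. The main obstacle is concentrated entirely in that last, non\+formal step.

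For part~(a), write $Z^n$ for the $n$\+th module of cocycles of the given acyclic complex $J^\bu$ of injective modules. Once all $Z^n$ are injective the complex is contractible, since each short exact sequence $0\rarrow Z^n\rarrow J^n\rarrow Z^{n+1}\rarrow0$ splits and the splittings assemble into a contracting homotopy; so it suffices to prove each $Z^n$ injective. I would apply $\prod_{n\in\boZ}$ to the family of these short exact sequences: since $\prod_{n\in\boZ}Z^{n+1}\cong\prod_{n\in\boZ}Z^n=:N$, this yields a short exact sequence
\[
 0\rarrow N\rarrow \prod_{n\in\boZ}J^n\rarrow N\rarrow0 .
\]
Here $\prod_n J^n$ is injective, and $N$ is fp\+injective (a product of fp\+injective modules is fp\+injective, as $\Ext^1_R(T,{-})$ commutes with products for finitely presented~$T$). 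Thus $N$ is an fp\+injective $\Inj$\+periodic module, hence injective by Theorem~\ref{stovicek-fp-injective-inj-periodic-module}, and each $Z^n$, being a direct summand of $N$, is injective.

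For part~(b), the first step is to reduce the target to an \emph{acyclic} complex. Fix a degree\+wise split monomorphism $J^\bu\rarrow\widetilde J^\bu$ into a homotopy injective complex $\widetilde J^\bu$ of injective modules; its cokernel $D^\bu$ is then a complex of injective modules (a degree\+wise direct summand of $\widetilde J^\bu$) and is acyclic. Since every morphism from the acyclic $I^\bu$ to the homotopy injective $\widetilde J^\bu$ is null\+homotopic, the distinguished triangle $J^\bu\rarrow\widetilde J^\bu\rarrow D^\bu\rarrow J^\bu[1]$ shows that $\Hom_{\Hot}(I^\bu,J^\bu)$ is a quotient of $\Hom_{\Hot}(I^\bu,D^\bu[-1])$; so it is enough to prove $\Hom_{\Hot}(I^\bu,D^\bu)=0$ for \emph{every} acyclic complex $D^\bu$ of injective modules. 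One may moreover reduce $I^\bu$ to be bounded above, by presenting it as the mapping telescope of the increasing chain of subcomplexes $\tau_{\le m}I^\bu$ (each a bounded\+above acyclic complex of fp\+injective modules with fp\+injective cocycles) and using that $\Hom_{\Hot}(\bigoplus_m\tau_{\le m}I^\bu,D^\bu)\cong\prod_m\Hom_{\Hot}(\tau_{\le m}I^\bu,D^\bu)$, and likewise for $D^\bu[-1]$.

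This leaves the core statement: a chain map $g\colon I^\bu\rarrow D^\bu$, with $I^\bu$ a (bounded above) acyclic complex of fp\+injective modules with fp\+injective cocycles and $D^\bu$ an acyclic complex of injective modules, is null\+homotopic. The hard part is that this cannot be done degree by degree, nor by an induction started from one end: the complex~\eqref{unbounded-acyclic-classical} already shows that an acyclic complex of injective modules with non\+fp\+injective cocycles need not be contractible, so the cocycle hypothesis on $I^\bu$ has to enter globally, and both complexes are unbounded. Here I would follow \v St\!'ov\'\i\v cek~\cite{Sto}: the fp\+injective modules form the right\+hand class of a complete cotorsion pair whose left\+hand class is deconstructible, and by the Hill lemma the induced cotorsion pair on the category of complexes is well behaved enough that a transfinite filtration argument produces the contracting homotopy for~$g$, circumventing the unavailable induction. (Alternatively one simply cites~\cite[Theorem~5.4]{Sto}.) This cotorsion\+theoretic step is the genuine obstacle; the reductions above are routine.
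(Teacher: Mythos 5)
Your proposal is correct, but note that the paper does not give its own proof of this theorem: it is a cited background result (\v St\!'ov\'\i\v cek~\cite{Sto}), and the paper only records that part~(a) is a special case of part~(b), that part~(a) is an equivalent restatement of Theorem~\ref{stovicek-fp-injective-inj-periodic-module} via exactly the countable-product observation you carry out, and that both parts follow from the pure-injective versions. Your part~(a) is complete and matches the paper's indicated route. For part~(b), your reductions (degreewise split embedding into a homotopy injective complex of injectives, then a mapping telescope over the canonical truncations $\tau_{\le m}I^\bu$) are valid but considerably longer than the deduction the paper sketches: by Lemma~\ref{absolutely-pure}, every monomorphism out of an fp\+injective module is pure, so an acyclic complex of fp\+injective modules with fp\+injective cocycles is \emph{pure acyclic}; every injective module is pure-injective; hence Theorem~\ref{stovicek-pure-injective-modules}(b) applies verbatim to give the null-homotopy, with no resolution or truncation needed. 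Both you and the paper ultimately defer the genuine content to~\cite{Sto}. One small inaccuracy: you describe the core of \v St\!'ov\'\i\v cek's argument as a cotorsion-pair/Hill-lemma filtration producing the contracting homotopy, whereas \cite{Sto} actually obtains it by translating to the functor category over the finitely presented left modules (the ``ring with many objects'' $\cL$ of Section~\ref{introd-pure-periodicity}) and invoking Neeman's Theorem~\ref{neeman-flat-projective}(b) there; since you also offer to cite \cite[Theorem~5.4]{Sto} directly, this does not affect correctness.
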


 Part~(a) of Theorem~\ref{stovicek-fp-injective-injective-modules} is
obviously a particular case of part~(b).
 Theorem~\ref{stovicek-fp-injective-injective-modules}(a) is
an equivalent restatement of
Theorem~\ref{stovicek-fp-injective-inj-periodic-module}.

 An $R$\+module $C$ is said to be \emph{cotorsion} if $\Ext^1_R(F,C)=0$
for all flat $R$\+modules~$F$.
 Thus the following theorem due to Bazzoni, Cort\'es-Izurdiaga,
and Estrada~\cite{BCE} can be viewed as complementing
Theorem~\ref{benson-goodearl-flat-projective}.

\begin{thm}[Bazzoni, Cort\'es-Izurdiaga, and
Estrada~{\cite[Theorem~1.2(2) or Proposition~4.8(2)]{BCE}}]
\label{bazzoni-cortes-estrada-cotorsion-periodic}
 Let $R$ be a ring; denote by\/ $\Cot$ the class of all cotorsion
$R$\+modules.
 Then any\/ $\Cot$\+periodic $R$\+module is cotorsion.
\end{thm}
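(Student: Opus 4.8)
My plan is to reduce the assertion to the case of flat modules of projective dimension at most one, and then to conclude by a short dimension-shifting argument with the acyclic complex obtained by splicing the periodicity extension. Two standard facts about the class $\mathsf{Flat}$ of flat $R$-modules enter the reduction. First, every flat module is filtered by countably presented flat modules; this deconstructibility of $\mathsf{Flat}$ is a standard fact underlying the flat cover theorem of Bican, El Bashir and Enochs, and it implies, via Eklof's lemma — for any $R$-module $N$, the class of $R$-modules $M'$ with $\Ext^1_R(M',N)=0$ is closed under transfinite extensions — that an $R$-module $M$ is cotorsion if and only if $\Ext^1_R(F,M)=0$ for every countably presented flat module $F$. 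Second, a countably presented flat module $F$ has projective dimension at most one: realizing $F$, by Lazard's theorem, as a direct limit of a countable chain $P_0\rarrow P_1\rarrow P_2\rarrow\dotsb$ of finitely generated free modules and using the telescope presentation $0\rarrow\bigoplus_n P_n\rarrow\bigoplus_n P_n\rarrow F\rarrow0$, whose left and middle terms are free, produces a length-one projective resolution of $F$. Thus it suffices to fix a short exact sequence $0\rarrow M\rarrow C\rarrow M\rarrow0$ with $C$ cotorsion together with a countably presented flat module $F$, and to show that $\Ext^1_R(F,M)=0$.

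To do this I would splice copies of the periodicity sequence into the acyclic complex $C^\bu$ all of whose terms equal $C$ and whose differential is the composite $C\rarrow M\rarrow C$; its modules of cocycles are all isomorphic to $M$. Choosing a projective resolution $0\rarrow P_1\rarrow P_0\rarrow F\rarrow0$ and applying the functor $\Hom_R(-,C^\bu)$ to it — which is exact on this sequence because $\Ext^1_R(F,C)=0$, i.e.\ because $C$ is cotorsion — yields a short exact sequence of complexes
\[ 0\rarrow\Hom_R(F,C^\bu)\rarrow\Hom_R(P_0,C^\bu)\rarrow\Hom_R(P_1,C^\bu)\rarrow0. \]
Its two right-hand terms are acyclic, since $\Hom_R(P_i,-)$ is exact (the $P_i$ being projective) and hence carries the acyclic complex $C^\bu$ to an acyclic complex; consequently the long exact cohomology sequence forces $\Hom_R(F,C^\bu)$ to be acyclic as well. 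On the other hand, the long exact sequence obtained by applying $\Hom_R(F,-)$ to $0\rarrow M\rarrow C\rarrow M\rarrow0$, together with $\Ext^1_R(F,C)=0$, identifies the cohomology module $H^m\bigl(\Hom_R(F,C^\bu)\bigr)$ with $\Ext^1_R(F,M)$ for every $m$. Hence $\Ext^1_R(F,M)=0$, as required.

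The only non-formal point — and the one I expect to be the main obstacle — is the reduction in the first paragraph. Since a flat module may have infinite projective dimension, a dimension-shifting argument applied to an arbitrary flat module would not terminate, and it is precisely the passage to countably presented, hence projective-dimension-at-most-one, flat modules, by way of the deconstructibility of $\mathsf{Flat}$ and Eklof's lemma, that makes the final computation go through. Everything after that reduction is routine bookkeeping with complexes, available only because splicing the periodicity extension produces an honest acyclic complex of cotorsion modules.
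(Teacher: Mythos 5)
Your proof is correct. The paper itself gives no proof of this statement --- it is quoted from Bazzoni, Cort\'es-Izurdiaga, and Estrada --- and your argument is essentially the standard one from that source: reduce to countably presented flat modules of projective dimension at most one via deconstructibility of the flat class and the Eklof lemma, then exploit $\Ext^{\ge 1}_R(F,C)=0$. The only remark worth making is that your final acyclic-complex computation can be compressed to a one-line dimension shift: the periodicity sequence together with $\Ext^1_R(F,C)=0=\Ext^2_R(F,C)$ gives $\Ext^1_R(F,M)\simeq\Ext^2_R(F,M)$, and the latter vanishes because $F$ has projective dimension at most one.
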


 The following homological algebra theorem is a reformulation of
Theorem~\ref{bazzoni-cortes-estrada-cotorsion-periodic}.

\begin{thm}[Bazzoni, Cort\'es-Izurdiaga, and
Estrada~{\cite[Theorems~5.1(2) and~5.3]{BCE}}]
\label{bazzoni-cortes-estrada-cotorsion-flat-modules}
 Let $R$ be a ring.  Then \par
\textup{(a)} any acyclic complex of cotorsion $R$\+modules has cotorsion
modules of cocycles; \par
\textup{(b)} if $C^\bu$ is a complex of cotorsion $R$\+modules
and $F^\bu$ is an acyclic complex of flat $R$\+modules with flat
modules of cocycles, then any morphism of complexes
$F^\bu\rarrow C^\bu$ is homotopic to zero.
\end{thm}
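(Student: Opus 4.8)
The plan for part~(a) is to derive it from Theorem~\ref{bazzoni-cortes-estrada-cotorsion-periodic} via the elementary observation recalled in the introduction. The class $\Cot$ of cotorsion $R$\+modules is closed under arbitrary products, since $\Ext^1_R(F,\prod_i C_i)\cong\prod_i\Ext^1_R(F,C_i)$, and it is closed under direct summands; taking in that observation the class of terms $\sA=\Cot$ (closed under countable products) and the class $\sB=\Cot$ (closed under summands), the statement of~(a) that every acyclic complex of cotorsion modules has cotorsion modules of cocycles becomes equivalent to the statement that every $\Cot$\+periodic module is cotorsion, which is exactly Theorem~\ref{bazzoni-cortes-estrada-cotorsion-periodic}.

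The real content is part~(b). The approach is to show that the total $\Hom$ complex $\Hom_R^\bu(F^\bu,C^\bu)$ is acyclic; its cohomology in degree~$0$ is the group of chain maps $F^\bu\rarrow C^\bu$ modulo homotopy, and the cohomology in other degrees is obtained by shifting~$C^\bu$. Write $Z^n=Z^n(F^\bu)$ for the cocycles; by hypothesis they are flat, and since $(\mathrm{flat},\mathrm{cotorsion})$ is a cotorsion pair we have $\Ext^1_R(Z^n,C^m)=0$ for all $n$ and~$m$. The first step would be to show that, for each fixed~$q$, the row $\dotsb\rarrow\Hom_R(F^{p+1},C^q)\rarrow\Hom_R(F^p,C^q)\rarrow\Hom_R(F^{p-1},C^q)\rarrow\dotsb$ of the double complex $\Hom_R(F^\bu,C^\bu)$ is exact. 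For this I would use that each short exact sequence $0\rarrow Z^p\rarrow F^p\rarrow Z^{p+1}\rarrow0$ is pure\+exact, its cokernel being flat; together with the vanishing of the groups $\Ext^1_R(Z^k,C^q)$ this implies both that $\Hom_R(-,C^q)$ carries that sequence to a short exact sequence and that every homomorphism $Z^{p+1}\rarrow C^q$ extends along $Z^{p+1}\hookrightarrow F^{p+1}$, and a short diagram chase then identifies the coboundaries with the cocycles of the row at every term. The same reasoning applies with $F^\bu$ replaced by any good truncation $\tau_{\le n}F^\bu$, whose cocycles are again flat.

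The main obstacle is to pass from exactness of the rows to acyclicity of the product\+totalization $\Hom_R^\bu(F^\bu,C^\bu)$, the difficulty being that the double complex is unbounded in both directions. To handle it, I would first reduce to the case that $F^\bu$ is bounded above. Since $F^\bu=\varinjlim_n\tau_{\le n}F^\bu$, one has $\Hom_R^\bu(F^\bu,C^\bu)=\varprojlim_n\Hom_R^\bu(\tau_{\le n}F^\bu,C^\bu)$, and the transition maps of this tower are surjective in every degree: the only nontrivial component is the restriction map $\Hom_R(F^n,C^m)\rarrow\Hom_R(Z^n,C^m)$, which is surjective because $\Ext^1_R(Z^{n+1},C^m)=0$. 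Hence $\varprojlim^1$ of the tower vanishes, and the Milnor exact sequence reduces the claim to showing that $\Hom_R^\bu(G^\bu,C^\bu)$ is acyclic when $G^\bu$ is a bounded\+above acyclic complex of flat modules with flat cocycles. For such a $G^\bu$ the double complex $\Hom_R(G^p,C^q)$ is supported in a half\+plane (the index $p$ is bounded above) and has exact rows by the first step, so its product\+totalization is acyclic by the acyclic assembly lemma; concretely, truncating the exact rows at $p\ge j$ produces a tower of double complexes that are bounded in the $p$\+direction and still have exact rows, hence have acyclic totalizations, and this tower has surjective transition maps with inverse limit $\Hom_R^\bu(G^\bu,C^\bu)$, so one further application of the Milnor exact sequence concludes. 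Combining the two reductions yields part~(b).
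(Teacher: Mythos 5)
The paper does not prove Theorem~\ref{bazzoni-cortes-estrada-cotorsion-flat-modules} at all: it is quoted from~[BCE] as background, with only the remark that parts~(a) and~(b) are easily deduced from each other. So your argument has to stand on its own. Part~(a) is correct: $\Cot$ is closed under products and direct summands, and the elementary observation recalled in Section~\ref{introd-module-periodicity-theorems-subsecn} reduces~(a) to Theorem~\ref{bazzoni-cortes-estrada-cotorsion-periodic}. In part~(b), Step~1 (exactness of the rows $\Hom_R(F^\bu,C^q)$, obtained by splicing the sequences $0\to\Hom(Z^{p+1},C^q)\to\Hom(F^p,C^q)\to\Hom(Z^p,C^q)\to0$) and the reduction to a bounded-above complex via the tower of good truncations $\tau_{\le n}F^\bu$ with degreewise surjective transition maps are also correct.

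The gap is in the final step. After your reduction, the double complex $\Hom_R(G^p,C^q)$ is half-plane in the $p$\+direction, i.e.\ bounded \emph{along} its exact rows and unbounded transversally to them. The acyclic assembly lemma gives acyclicity of the \emph{product} totalization only when the half-plane boundedness is \emph{transverse} to the exact rows; for a complex bounded along its exact rows it only yields acyclicity of the direct-sum totalization, and the product totalization genuinely need not be acyclic (the standard $\boZ/4$\+staircase examples show this). Your proposed repair does not close the gap: stupid truncations of $G^\bu$ at $p\ge j$ destroy exactness of the rows at the cut, while good truncations $\tau_{\ge j}G^\bu$ are \emph{quotient} complexes of $G^\bu$, so the complexes $\Hom_R^\bu(\tau_{\ge j}G^\bu,C^\bu)$ form a \emph{direct} system of subcomplexes of $\Hom_R^\bu(G^\bu,C^\bu)$ whose union consists only of the families with bounded-below support; there is no inverse tower and no Milnor sequence. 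Moreover, the step cannot be rescued by any double-complex formality: take $R=\Lambda=k[\epsilon]/(\epsilon^2)$, let $C^\bu$ be the complex~\eqref{unbounded-acyclic-classical} and $G^\bu=\tau_{\le 0}C^\bu$. Then $G^\bu$ is a bounded-above acyclic complex, $\Ext^n_\Lambda(Z^p,C^q)=\Ext^n_\Lambda(k,\Lambda)=0$ for all $n\ge1$, so every row of $\Hom_\Lambda(G^p,C^q)$ is exact exactly as in your Step~1 --- yet the inclusion $G^\bu\to C^\bu$ is not null-homotopic, since a contracting homotopy would require a $\Lambda$\+map $h^0\:\epsilon\Lambda\to\Lambda$ with $\epsilon h^0$ equal to the inclusion $\epsilon\Lambda\subset\Lambda$, and every such $h^0$ lands in $\epsilon\Lambda$, forcing $\epsilon h^0=0$. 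Hence any correct proof of~(b) must use flatness of the cocycles and cotorsionness of the $C^q$ beyond the vanishing of these Ext groups; the missing input in~[BCE] (and in the parallel Theorem~\ref{lfp-category-hot-hom-fp-proj-fp-inj-thm} of this paper) is a deconstruction/filtration argument resting ultimately on Theorem~\ref{neeman-theorem}, not the acyclic assembly lemma.
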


 Parts~(a) and~(b) of
Theorem~\ref{bazzoni-cortes-estrada-cotorsion-flat-modules} can be
easily deduced from each other.
 In the language of~\cite[Definition~3.3]{Gil},
Theorem~\ref{bazzoni-cortes-estrada-cotorsion-flat-modules}(b) tells
that any complex of cotorsion modules is dg\+cotorsion.

 The pair of classes (all flat modules, all cotorsion modules) is
a thematic example of what people call a \emph{hereditary complete
cotorsion pair} in the category of $R$\+modules~\cite[Section~5.2
and Chapter~6]{GT}.
 In this sense, it is tempting to try and deduce
Theorem~\ref{bazzoni-cortes-estrada-cotorsion-periodic} from
Theorem~\ref{benson-goodearl-flat-projective}, or
Theorem~\ref{bazzoni-cortes-estrada-cotorsion-flat-modules}
from Theorem~\ref{neeman-flat-projective}, or vice versa.
 However, this does not seem to work well.
 The theorem on $\Proj$\+periodic flat modules and the theorem on
$\Cot$\+periodic modules appear to be two quite different results
nicely complementing each other.

 The aim of this paper is to obtain a similarly complementary result
to Theorems~\ref{stovicek-fp-injective-inj-periodic-module}
and~\ref{stovicek-fp-injective-injective-modules}.
 An $R$\+module $P$ is said to be
\emph{fp\+projective}~\cite[Definition~3.3 and Theorem~3.4(2)]{Trl},
\cite{MD} if $\Ext^1_R(P,J)=0$ for all fp\+injective $R$\+modules~$J$.
 The pair of classes (fp\+projective modules, fp\+injective modules)
is a complete cotorsion pair; this cotorsion pair in the category
of right $R$\+modules is hereditary if and only if the ring $R$
is right coherent.
 The potential importance of fp\+injective and fp\+projective modules
in the context of semi-infinite homological algebra and algebraic
geometry over coherent rings or schemes was emphasized
in the paper~\cite{Pfp}.

 To remedy the failure of the (fp\+projective, fp\+injective) cotorsion
pair to be hereditary over a noncoherent ring, we follow
the suggestion of~\cite[Section~4]{EK} and consider \emph{strongly
fp\+injective modules}.
 An $R$\+module $J$ is said to be \emph{strongly fp\+injective} if
$\Ext^n_R(T,J)=0$ for all finitely presented $R$\+modules $T$ and
$n\ge1$.
 We say that an $R$\+module $P$ is \emph{weakly fp\+projective} if
$\Ext^1_R(P,J)=0$ for all strongly fp\+injective $R$\+modules~$J$. 
 The pair of classes (weakly fp\+projective modules, strongly
fp\+injective modules) is a hereditary complete cotorsion pair over
any ring~$R$.
 The classes of fp\+projective and weakly fp\+projective right
$R$\+modules coincide if and only if the ring $R$ is right coherent.

 The following theorem is the main result of this paper formulated in
the module-theoretic language.

\begin{thm}[Corollary~\ref{modules-weakly-fp-periodicity-cor} below]
\label{fp-proj-periodic-module}
 Let $R$ be a ring; denote by\/ $\FpProj$ the class of all
fp\+projective $R$\+modules.
 Then any $\FpProj$\+periodic $R$\+module is weakly fp\+projective.
\end{thm}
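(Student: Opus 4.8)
The plan is to deduce Theorem~\ref{fp-proj-periodic-module} from the general structure of the cotorsion pair formed by the weakly fp\+projective and the strongly fp\+injective modules, together with the Hill lemma and the strong form of the pure\+projective periodicity theorem. Write $\mathcal W$ for the class of weakly fp\+projective modules and $\mathcal J$ for the class of strongly fp\+injective ones. Recall that $(\mathcal W,\mathcal J)$ is a hereditary complete cotorsion pair over an arbitrary ring~$R$, that $\FpProj\subseteq\mathcal W$ since $\mathcal J$ consists of fp\+injective modules, and that $(\mathcal W,\mathcal J)$ is cogenerated by a \emph{set} of modules of presentation at most $\mu:=|R|+\aleph_0$ (e.g.\ the finitely presented modules together with all submodules of finitely generated free modules). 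By Eklof's lemma $\mathcal W$ is closed under transfinite extensions, so to prove a module lies in $\mathcal W$ it suffices to filter it by modules of $\mathcal W$ of presentation $<\kappa$ for any fixed regular $\kappa>\mu$. Two remarks will be used repeatedly. First, for any module $N$ and any $J\in\mathcal J$ one has $\Ext^1_R(N,J)=\operatorname{Pext}^1_R(N,J)$: a pure\+projective resolution of $N$ has all terms in $\PProj\subseteq\mathcal W$, hence $\Ext$\+orthogonal to $J$ in all positive degrees by heredity, so it computes both $\Ext^\bu_R(N,J)$ and $\operatorname{Pext}^\bu_R(N,J)$; thus ``$N\in\mathcal W$'' amounts to ``every \emph{pure}-exact sequence $0\rarrow J\rarrow E\rarrow N\rarrow0$ with $J\in\mathcal J$ splits''. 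Second, every fp\+projective module is $\FpProj$\+periodic, so we may freely enlarge $M$ by fp\+projective summands.

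Let $0\rarrow M\overset i\rarrow A\overset p\rarrow M\rarrow0$ be exact with $A$ fp\+projective. Put $d=i\circ p$, so $d^2=0$ and $\ker d=\operatorname{im}d=i(M)$; identify $M$ with $i(M)\subseteq A$ and view $A$ as the underlying module of the acyclic two\+periodic complex $A^\bu=(\dotsb\rarrow A\overset d\rarrow A\overset d\rarrow A\rarrow\dotsb)$, whose module of cocycles is $M$. Since $A$ is fp\+projective, fix $A'$ with $A\oplus A'=F$ a module filtered by finitely presented modules, fix a regular $\kappa>\mu$, apply the Hill lemma to this filtration of $F$, and intersect the resulting family with $A$. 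This gives a family $\mathcal L$ of submodules of $A$, closed under arbitrary sums and intersections, containing $0$ and $A$, such that every subquotient $N'/N$ (for $N\subseteq N'$ in $\mathcal L$) is again filtered by finitely presented modules, with the smallness\+absorption property: for $N\in\mathcal L$ and $X\subseteq A$ with $|X|<\kappa$ there is $P\in\mathcal L$, $N\cup X\subseteq P$, with $P/N$ of presentation $<\kappa$.

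Using the closure of $\mathcal L$ under sums, the identity $d^2=0$, and the surjectivity of $d\colon A\rarrow i(M)$, build a continuous chain $(B_\alpha)_{\alpha\le\tau}$ in $\mathcal L$ with $B_0=0$, $B_\tau=A$, each $B_{\alpha+1}/B_\alpha$ of presentation $<\kappa$, and with $d(B_\alpha)\subseteq B_\alpha$ and $B_\alpha\cap i(M)=d(B_\alpha)$ for every $\alpha$. At a successor step one adjoins a new generator of $A$ and then alternately closes up under $d$ and adjoins $d$\+preimages of the portion of $i(M)$ reached so far --- which, modulo the already achieved $d(B_\alpha)=B_\alpha\cap i(M)$, is $<\kappa$\+generated --- iterating $\omega$ times and staying inside $\mathcal L$ at each stage by absorption; limit steps are continuity. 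Then each $B_\alpha^\bu$ is an acyclic subcomplex of $A^\bu$ with cocycle module $Z_\alpha:=B_\alpha\cap i(M)$; the quotient $(B_{\alpha+1}/B_\alpha)^\bu$ is an acyclic two\+periodic complex all of whose terms equal the $<\kappa$\+presented fp\+projective module $\overline A_\alpha:=B_{\alpha+1}/B_\alpha$, and its module of cocycles is $S_\alpha:=Z_{\alpha+1}/Z_\alpha$, of presentation $<\kappa$. As $(Z_\alpha)_{\alpha\le\tau}$ is a continuous filtration of $M=i(M)$ with consecutive quotients $S_\alpha$, it remains to prove each $S_\alpha\in\mathcal W$.

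This is the technical heart, where the strong pure\+projective periodicity enters. Each $S_\alpha$ is $\FpProj$\+periodic with a witness $\overline A_\alpha$ that is now $<\kappa$\+presented; fixing a pure epimorphism $Q\rarrow\overline A_\alpha$ with $Q$ pure\+projective and of presentation $<\kappa$ and resolving $\overline A_\alpha^\bu$, as a two\+periodic complex, by complexes of pure\+projective modules, one transports $\overline A_\alpha^\bu$ to an acyclic complex of pure\+projective modules; combined with the first preliminary remark (it suffices to split pure\+exact extensions of $S_\alpha$ by modules of $\mathcal J$), this reduces the claim $S_\alpha\in\mathcal W$ to a genuinely pure\+exact periodicity statement, to which the strong form of the pure\+projective periodicity theorem --- a pure\+acyclic complex of pure\+projective modules is contractible, hence its cocycles are direct summands of its terms, so pure\+projective, hence fp\+projective, hence in $\mathcal W$ --- applies. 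I expect this passage to be the main obstacle: the periodic datum we start from is only fp\+projective, not pure\+projective, and the sequence it sits in need not be pure, so one must manufacture a pure\+exact situation from it; the smallness supplied by the Hill lemma is precisely what makes the auxiliary pure\+projective resolution and the ensuing bookkeeping manageable, and it is the morphism\+level (``strong'') form of the pure\+projective periodicity, not its bare statement, that is needed. Granting $S_\alpha\in\mathcal W$ for all $\alpha$, the filtration $(Z_\alpha)$ exhibits $M$ as a transfinite extension of modules in $\mathcal W$, so $M\in\mathcal W$; that is, $M$ is weakly fp\+projective.
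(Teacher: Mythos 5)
There is a genuine gap, and it sits exactly where you say you expect ``the main obstacle'': the passage from the non-pure periodicity datum to a situation where pure-projective periodicity applies is never actually carried out, and the form in which you sketch its conclusion cannot be correct. You propose to filter $M$ by cocycle modules $S_\alpha$ of small two-periodic acyclic complexes of fp\+projectives and then to show each $S_\alpha$ is a cocycle of a \emph{pure} acyclic complex of pure-projectives, ``hence pure-projective, hence fp\+projective, hence in $\mathcal W$.'' But if every $S_\alpha$ were fp\+projective, then $M\in\Fil(\FpProj)=\FpProj$ by the Eklof lemma (since $\FpProj={}^{\perp_1}(\text{fp\+injectives})$), and this is false over non-coherent rings: Example~\ref{noncoherent-counterex} exhibits a $\Proj$\+periodic module that is not fp\+projective, and Corollary~\ref{modules-cocycles-fp-proj-iff-coherent-cor} shows this failure is equivalent to non-coherence. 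So no refinement of your filtration can have pure-projective (or even fp\+projective) consecutive quotients in general; the ``transport to an acyclic complex of pure-projectives'' step, besides being unexplained (resolving the two-periodic complex by pure-projectives yields an acyclic but not pure-acyclic total complex, whose cocycles are not the $S_\alpha$), is aimed at a conclusion that is provably too strong. Secondary issues: intersecting the Hill family of $F=A\oplus A'$ with the summand $A$ does not obviously preserve closure under sums or the fp\+filtered-subquotient property, so the family $\mathcal L$ and the $d$\+invariant chain $(B_\alpha)$ need a genuine construction, not just an appeal to the Hill lemma.

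The paper resolves the difficulty by putting the purity on the \emph{other} side of the Hom. One tests $Z^n$ against a strongly fp\+injective $Y$ and reduces $\Ext^1_\sK(Z^n,Y)=0$ to the acyclicity of $\Hom_\sK(P^\bu,Y)$, which is compared with $\Hom_\sK(P^\bu,I^\bu)$ for an injective coresolution $I^\bu$ of $Y$; the point is then Theorem~\ref{lfp-category-hot-hom-fp-proj-fp-inj-thm}: every morphism from a complex of fp\+projectives to an acyclic complex $J^\bu$ with fp\+injective cocycles is null-homotopic. There the Hill lemma is applied to the \emph{complex} $P^\bu$ (Lemma~\ref{complexes-deconstructed}), deconstructing it into complexes of finitely presentable, hence termwise pure-projective, objects, while $J^\bu$ is automatically \emph{pure} acyclic because its cocycles are fp\+injective (Lemma~\ref{absolutely-pure}); then the morphism-level pure-projective periodicity theorem applies directly. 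Your preliminary remark that $\Ext^1_R(N,J)=\operatorname{Pext}^1_R(N,J)$ for strongly fp\+injective $J$ is correct and is the module-level shadow of this mechanism, but by itself it does not supply the missing reduction.
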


 The next theorem provides a formulation of our main result in
the language of homological algebra of complexes of modules.

\begin{thm}[Corollaries~\ref{modules-cycles-in-fp-projective-cor}
and~\ref{modules-hot-hom-fp-proj-fp-inj-cor} below]
\label{fp-proj-fp-inj-modules}
 Let $R$ be a ring.  Then \par
\textup{(a)} any acyclic complex of fp\+projective $R$\+modules has
weakly fp\+projective modules of cocycles; \par
\textup{(b)} if $P^\bu$ is a complex of fp\+projective $R$\+modules
and $J^\bu$ is an acyclic complex of fp\+injective $R$\+modules with
fp\+injective modules of cocycles, then any morphism of complexes
$P^\bu\rarrow J^\bu$ is homotopic to zero.
\end{thm}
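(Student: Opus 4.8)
\smallskip

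Part~(b) follows quickly from properties of cotorsion pairs that are already at our disposal; the substance of the theorem is part~(a), which I would prove by reducing the $\FpProj$\+periodicity statement --- via the Hill lemma --- to the pure\+projective periodicity, the bounded base case being supplied by the strong form of the latter.

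For part~(b): recall that $(\text{weakly fp-projective},\ \text{strongly fp-injective})$ is a hereditary complete cotorsion pair, that every fp\+projective module is weakly fp\+projective (because $\Ext^1_R(\text{fp-proj},\,\text{strongly fp-inj})\subseteq\Ext^1_R(\text{fp-proj},\,\text{fp-inj})=0$), and that an acyclic complex $J^\bu$ of fp\+injective modules whose modules of cocycles are fp\+injective has in fact \emph{strongly} fp\+injective modules of cocycles --- if $C$ is such a cocycle then, for a finitely presented module $T$ and $n\ge1$, dimension shifting along the short exact sequences of $J^\bu$ identifies $\Ext^n_R(T,C)$ with a group $\Ext^1_R(T,-)$ of an fp\+injective module, hence it vanishes. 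Granting this, for each term $P^k$ of $P^\bu$ the complex $\Hom_R(P^k,J^\bu)$ is acyclic: using that $J^\bu$ is acyclic, its $m$\+th cohomology embeds into $\Ext^1_R\bigl(P^k,\,Z^{m-1}(J^\bu)\bigr)=0$. Consequently the total Hom\+complex $\Hom^\bu_R(P^\bu,J^\bu)$ is acyclic, by the usual reduction of an unbounded complex $P^\bu$ to its bounded truncations, and every morphism $P^\bu\rarrow J^\bu$ is null\+homotopic.

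For part~(a), the elementary equivalence recalled in the introduction ($\FpProj$ closed under countable coproducts, weakly fp\+projective modules closed under summands) reduces the statement to showing that every $\FpProj$\+periodic module is weakly fp\+projective. Fix a short exact sequence $0\rarrow M\rarrow P\rarrow M\rarrow0$ with $P$ fp\+projective, the maps being $i$ and $p$. Choose $P'$ with $Q:=P\oplus P'$ filtered by finitely presented modules and put $\widetilde Q:=Q^{(\omega)}$; then $\widetilde Q$ is again filtered by finitely presented modules and $P\oplus\widetilde Q\cong\widetilde Q\cong\widetilde Q\oplus\widetilde Q$, so adding to the given sequence the two split sequences $0\rarrow\widetilde Q\rarrow\widetilde Q\rarrow0\rarrow0$ and $0\rarrow0\rarrow\widetilde Q\rarrow\widetilde Q\rarrow0$ produces a short exact sequence $0\rarrow M\oplus\widetilde Q\rarrow\widetilde Q\rarrow M\oplus\widetilde Q\rarrow0$ (its middle term being $P\oplus\widetilde Q\oplus\widetilde Q\cong\widetilde Q$). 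Since the weakly fp\+projective modules are closed under direct summands, we may therefore assume $P$ itself carries a filtration by finitely presented modules. Writing $\overline M:=i(M)\le P$ and $q:=ip\:P\rarrow P$ --- a map with image and kernel both equal to $\overline M$ and with $q^2=0$, so that $P/\overline M\cong\overline M$ --- the problem becomes: \emph{if $P$ is filtered by finitely presented modules and $\overline M\le P$ satisfies $P/\overline M\cong\overline M$ via $q$, then $\overline M$ is weakly fp\+projective.}

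Now apply the Hill lemma to the chosen filtration of $P$, getting the Hill family $\mathcal H$ of submodules of $P$: closed under arbitrary sums and intersections, containing the filtration, with $N'/N$ filtered by finitely presented modules whenever $N\subseteq N'$ lie in $\mathcal H$, and with the smallness property that every subset of $P$ of cardinality $\le\lambda_0:=|R|+\aleph_0$ is contained in a member of $\mathcal H$ which, over any smaller member of $\mathcal H$, is filtered by $\le\lambda_0$ finitely presented modules. A routine closure argument (using the closure properties of $\mathcal H$, the surjectivity of $p$, and the injectivity of $i$) then yields a continuous chain $0=N_0\subseteq N_1\subseteq\dots\subseteq N_\mu=P$ of members of $\mathcal H$ such that, for every $\beta$, $q(N_\beta)=N_\beta\cap\overline M$ (in particular $q(N_\beta)\subseteq N_\beta$) and $N_{\beta+1}/N_\beta$ is filtered by $\le\lambda_0$ finitely presented modules. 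Put $A_\beta:=N_\beta\cap\overline M$; then $q$ induces, compatibly in $\beta$, short exact sequences $0\rarrow A_\beta\rarrow N_\beta\rarrow A_\beta\rarrow0$, and passing to successive quotients gives
\[
 0\rarrow A_{\beta+1}/A_\beta\rarrow N_{\beta+1}/N_\beta\rarrow A_{\beta+1}/A_\beta\rarrow0 .
\]
So $(A_\beta)_\beta$ is a continuous filtration of $\overline M$, and --- the weakly fp\+projective modules being closed under transfinite extensions --- it suffices to prove each $A_{\beta+1}/A_\beta$ weakly fp\+projective. But $A_{\beta+1}/A_\beta$ is again $\FpProj$\+periodic, now with middle term $N_{\beta+1}/N_\beta$ still filtered by finitely presented modules but of cardinality $\le\lambda_0$; inducting on the cardinality $|P|$ of the middle term (which remains filtered by finitely presented modules at every stage of the induction) reduces the whole problem to the bounded case $|P|\le\lambda_0$, which is settled by the strong form of the pure\+projective periodicity of \v Saroch and \v St\!'ov\'\i\v cek. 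I expect the main obstacles to be exactly these last two points: arranging the Hill chain so that the compatibility conditions $q(N_\beta)=N_\beta\cap\overline M$ hold --- these are precisely what make the ``sub\+periodicities'' $0\rarrow A_\beta\rarrow N_\beta\rarrow A_\beta\rarrow0$ exact --- and handling the bounded base case through pure\+projectivity; the rest is bookkeeping with filtrations and cotorsion pairs.
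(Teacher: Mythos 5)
Your proposal contains two fatal gaps, one in each part.

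For part~(b): after correctly observing that each column $\Hom_R(P^k,J^\bu)$ is acyclic (because $\Ext^1_R(P^k,Z^m(J^\bu))=0$), you conclude that the total Hom-complex $\Hom^\bu_R(P^\bu,J^\bu)$ is acyclic ``by the usual reduction of an unbounded complex $P^\bu$ to its bounded truncations.'' No such reduction exists: for unbounded $P^\bu$ the total Hom-complex is a product-totalization of a double complex, and columnwise acyclicity does not imply acyclicity of the totalization. Your argument uses nothing beyond $\Ext^1(P^i,Z^j(J^\bu))=0$, so if it were valid it would show that any morphism from any complex $A^\bu$ to any acyclic complex $B^\bu$ with $\Ext^1(A^i,Z^j(B^\bu))=0$ is null-homotopic; applied to the identity map of the standard acyclic complex of projective-injective modules over $\Lambda=k[\epsilon]/(\epsilon^2)$ (equation~\eqref{unbounded-acyclic-classical}), this would make that complex contractible, which it is not. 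Part~(b) is in fact the hard core of the theorem: the paper's proof reduces (after a direct-sum trick) to $P^\bu$ with $\Fil(\sK_\fp)$-terms, uses the Hill lemma \emph{for complexes} (Lemma~\ref{complexes-deconstructed}) to filter $P^\bu$ inside $\sC(\sK)$ by bounded-below complexes of finitely presented modules, applies the Eklof lemma together with the identification $\Hom_{\Hot}(P^\bu,J^\bu)\simeq\Ext^1_{\sC(\sK)}(P^\bu,J^\bu[-1])$, and finishes with \v St\!'ov\'\i\v cek's pure version of Neeman's theorem, noting that finitely presented modules are pure-projective and that $J^\bu$ is pure acyclic by Lemma~\ref{absolutely-pure}. (A side error: your claim that the cocycles of $J^\bu$ are automatically \emph{strongly} fp-injective is false over noncoherent rings --- that is precisely condition~(3) of Corollary~\ref{two-fp-cotors-pairs-agree-equiv-loc-coherent} --- though you do not actually use it.)

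For part~(a): the Hill-family filtration of $P$ compatible with $q=ip$, the snake-lemma passage to quotient periodicity sequences $0\to A_{\beta+1}/A_\beta\to N_{\beta+1}/N_\beta\to A_{\beta+1}/A_\beta\to0$, and the Eklof-lemma reduction to small successive quotients are all plausible bookkeeping. But the base case is not settled, and cannot be settled the way you indicate. The sequence $0\to A\to N\to A\to0$ obtained at the bottom of the induction is \emph{not} pure, so the pure-projective periodicity theorem (Simson, \v St\!'ov\'\i\v cek, or any ``strong form'' thereof) does not apply to it; and no cardinality bound rescues this, since the counterexamples of Section~\ref{counterexamples-secn} (e.g.\ Example~\ref{example-Kronecker-algebra}) exhibit countably generated non-pure $\PProj$-periodic, even $\Proj$-periodic, modules that are not pure-projective. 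What is actually true in the base case --- weak fp-projectivity --- is exactly the statement being proved, so the induction has no independent anchor. The paper instead deduces~(a) \emph{from}~(b): given a strongly fp-injective $Y$ with augmented injective resolution $J^\bu=(Y\to I^\bu)$, hereditariness of $(\sK_\proj^\wfp,\sK_\inj^\sfp)$ makes $J^\bu$ an acyclic complex of fp-injectives with fp-injective cocycles, whence $\Hom_R(P^\bu,J^\bu)$ and $\Hom_R(P^\bu,I^\bu)$ are both acyclic and therefore so is $\Hom_R(P^\bu,Y)$, giving $\Ext^1_R(Z^n,Y)=0$. Without a correct proof of~(b), your argument for~(a) has no foundation either.
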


 In the preceding paper of \v Saroch and
\v St\!'ov\'\i\v cek~\cite[Example~4.3]{SarSt}, the results of our
Theorems~\ref{fp-proj-periodic-module}\+-\ref{fp-proj-fp-inj-modules}
were obtained for right modules over a right coherent ring $R$
using complicated set-theoretic techniques.
 Our proof in the present paper is both more elementary and provides
a more general version of fp\+projective periodicity, in that we
do not assume coherence.
 On the other hand, it is easy to produce a counterexample showing
that, over any ring that is not right coherent, a non-fp-projective
(but weakly fp\+projective!) $\FpProj$\+periodic right module exists.

 To the extent that fp\+injective modules can be viewed as dual
analogues of flat modules, one can also view fp\+projective modules
as dual analogues of cotorsion modules.
 Thus our Theorems~\ref{fp-proj-periodic-module}
and~\ref{fp-proj-fp-inj-modules} are dual-analogous to
Theorems~\ref{bazzoni-cortes-estrada-cotorsion-periodic}
and~\ref{bazzoni-cortes-estrada-cotorsion-flat-modules} of
Bazzoni, Cort\'es-Izurdiaga, and Estrada.

 Theorem~\ref{fp-proj-fp-inj-modules}(a) is
an equivalent restatement
of Theorem~\ref{fp-proj-periodic-module}.
 In the language of~\cite[Definition~3.3]{Gil},
Theorem~\ref{fp-proj-fp-inj-modules}(b) tells that any
complex of fp\+projective modules is dg\+fp\+projective.
 Assuming the ring to be coherent, \v Saroch and \v St\!'ov\'\i\v cek
in~\cite[Example~4.3]{SarSt} deduced what is stated above as
Theorem~\ref{fp-proj-fp-inj-modules}(b) from
Theorem~\ref{fp-proj-fp-inj-modules}(a).
 Our proof of Theorem~\ref{fp-proj-fp-inj-modules}(a) deduces it from
Theorem~\ref{fp-proj-fp-inj-modules}(b), while the latter, in turn,
is obtained by reducing the problem to a \emph{pure}
version of Theorem~\ref{neeman-flat-projective}(b).

 As pure periodicity theorems play an important role in our proofs and
are also interesting on their own, we will formulate them below in this
introduction, firstly in the context of module categories and then
more generally.

\subsection{{}} \label{introd-pure-periodicity}
 Recall that a short exact sequence of modules is called \emph{pure} if
its exactness is preserved by taking the tensor product with any module,
or equivalently, by taking the Hom from any finitely presented
module~\cite[Definition~2.6 and Lemma~2.19]{GT}.
 The class of all pure exact sequences defines an exact category
structure on the category of $R$\+modules, called the \emph{pure
exact structure}.
 The acyclic complexes, projective objects, and injective objects with
respect to this exact structure are called \emph{pure acyclic},
\emph{pure-projective}, and \emph{pure-injective}, respectively.
 Given a class of $R$\+modules $\sA$, an $R$\+module $M$ is said to be
\emph{pure\/ $\sA$\+periodic} if there exists a pure short exact
sequence $0\rarrow M\rarrow A\rarrow M\rarrow0$ with $A\in\sA$.

 The following theorem due to Simson~\cite{Sim} is a pure version of
Theorem~\ref{benson-goodearl-flat-projective}.

\begin{thm}[Simson~{\cite[Theorem~1.3 or~4.4]{Sim}}]
\label{simson-pure-proj-periodic}
 Let $R$ be a ring; denote by\/ $\PProj$ the class of all
pure-projective $R$\+modules.
 Then any pure\/ $\PProj$\+periodic $R$\+module is pure-projective.
\end{thm}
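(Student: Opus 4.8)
The plan is to deduce Theorem~\ref{simson-pure-proj-periodic} from the Benson--Goodearl theorem (Theorem~\ref{benson-goodearl-flat-projective}) by passing from $R$\+modules with their pure exact structure to additive functors on finitely presented modules. I treat right $R$\+modules below; the left-module case is symmetric.

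First I would set up the restricted Yoneda dictionary. Let $\sS$ be the (essentially small, additive) category of finitely presented right $R$\+modules, and let $\sC=(\sS^\rop,\Ab)$ be the Grothendieck category of additive functors $\sS^\rop\rarrow\Ab$; this is a locally finitely presented category with a generating family of finitely generated projective objects, namely the representables $h_F=\Hom_R(-,F)$, $F\in\sS$ (equivalently, $\sC$ is the category of unital modules over the functor ring of~$\sS$). The restricted Yoneda functor $\Phi\:M\longmapsto\Hom_R(-,M)|_{\sS}$ is a fully faithful, left exact functor from the category of right $R$\+modules to $\sC$; its essential image is exactly the class of flat objects of $\sC$ (the filtered colimits of representables), since $G\in\sS$ is finitely presented in $\sS$ precisely when $\Hom_R(G,-)$ commutes with filtered colimits of modules. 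Moreover $\Phi$ preserves arbitrary direct sums (here one uses that the arguments $F$ are finitely presented), it carries a short exact sequence of modules to a short exact sequence of functors if and only if that sequence is pure (exactness in $\sC$ is tested by evaluation at each $F\in\sS$, which is the same as applying $\Hom_R(F,-)$), and it restricts to an equivalence between the full subcategory of pure-projective right $R$\+modules and the full subcategory of projective objects of $\sC$ (both being the summands of direct sums of, respectively, finitely presented modules and representable functors).

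With this in hand the argument is short. Given a pure short exact sequence $0\rarrow M\rarrow A\rarrow M\rarrow0$ of right $R$\+modules with $A$ pure-projective, apply $\Phi$ to obtain a short exact sequence $0\rarrow\Phi(M)\rarrow\Phi(A)\rarrow\Phi(M)\rarrow0$ in $\sC$ with $\Phi(A)$ projective and $\Phi(M)$ flat; that is, $\Phi(M)$ is a flat, $\Proj$\+periodic object of $\sC$. By the Benson--Goodearl theorem — its proof, or equivalently the restatement of Theorem~\ref{neeman-flat-projective}(a), carries over to modules over the functor ring, i.e.\ to the category $\sC$ — the object $\Phi(M)$ is projective. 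Transporting this back: $\Phi(M)$ is a direct summand of some $\bigoplus_i h_{F_i}=\Phi\bigl(\bigoplus_i F_i\bigr)$ with $F_i\in\sS$, and by full faithfulness of $\Phi$ the corresponding splitting idempotent lifts to an idempotent endomorphism of $\bigoplus_i F_i$ whose image $M'$ satisfies $\Phi(M')\cong\Phi(M)$, hence $M'\cong M$. Thus $M$ is a direct summand of a direct sum of finitely presented modules, i.e.\ pure-projective.

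I expect the one genuinely technical point to be the step where the Benson--Goodearl (or Neeman) flat-periodicity theorem is invoked for the functor ring of $\sS$ — a ring with a complete set of orthogonal idempotents rather than a unital ring — equivalently, for the Grothendieck category $\sC$. This is handled either by rerunning the Benson--Goodearl argument directly in $\sC$ (it only uses countable coproducts, direct summands, and an Eilenberg-swindle-type construction, all available in $\sC$) or by appealing to a version of Theorem~\ref{neeman-flat-projective} stated for such categories. Everything else — the restricted Yoneda dictionary, the equivalence of purity with exactness of functors, and the matching of pure-projective modules with projective objects — is routine bookkeeping once the setup is in place.
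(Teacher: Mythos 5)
Your proposal is correct and follows essentially the same route as the paper: the authors also deduce Simson's theorem from the Benson--Goodearl/Neeman flat-periodicity results by interpreting the finitely presented right $R$\+modules as a ``ring with many objects'' $\cR$ (see Lemma~\ref{pure-exact-structure-as-flat-modules} and the surrounding discussion), under which modules become flat $\cR$\+modules, pure exact sequences become exact sequences of flats, and pure-projectives become projectives. The one technical point you flag --- that Benson--Goodearl/Neeman must be known for small preadditive categories rather than unital rings --- is exactly the point the paper addresses in Theorem~\ref{neeman-theorem}, where it is asserted that the original proof carries over verbatim.
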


 Theorem~\ref{simson-pure-proj-periodic} is a module-theoretic
formulation of pure-projective periodicity.
 The following theorem of \v St\!'ov\'\i\v cek~\cite{Sto} provides
a homological formulation, which is a pure version of
Theorem~\ref{neeman-flat-projective}.

\begin{thm}[\v St\!'ov\'\i\v cek~{\cite[Theorem~5.4 and
Corollary~5.5]{Sto}}]
\label{stovicek-pure-projective-modules}
 Let $R$ be a ring.  Then \par
\textup{(a)} any pure acyclic complex of pure-projective $R$\+modules
is contractible; \par
\textup{(b)} moreover, if $P^\bu$ is a complex of pure-projective
$R$\+modules and $X^\bu$ is a pure acyclic complex of $R$\+modules, then
any morphism of complexes $P^\bu\rarrow X^\bu$ is homotopic to zero.
\end{thm}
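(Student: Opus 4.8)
The plan is to prove~(b) and to obtain~(a) as the special case of~(b) in which $X^\bu=P^\bu$ and $f=\id_{P^\bu}$: once $\id_{P^\bu}$ is shown to be homotopic to zero, the complex $P^\bu$ is contractible. So it suffices to prove~(b).

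I would first dispose of the case where $P^\bu$ is bounded above, say $P^n=0$ for $n>N$. Given a morphism of complexes $f\colon P^\bu\rarrow X^\bu$, one builds a null\+homotopy $h^n\colon P^n\rarrow X^{n-1}$ of $f$ by descending induction on~$n$: set $h^n=0$ for $n>N$, and, assuming $h^{n+1}$ has been constructed, put $g^n=f^n-h^{n+1}d_P^n\colon P^n\rarrow X^n$. Using that $f$ is a chain map and that the homotopy identity $f^{n+1}=d_X^nh^{n+1}+h^{n+2}d_P^{n+1}$ already holds, a direct computation gives $d_X^ng^n=0$; as $X^\bu$ is acyclic, this says that $g^n$ factors through $Z^n(X^\bu)=\ker d_X^n=\operatorname{im} d_X^{n-1}$. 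The crucial point is that the epimorphism $X^{n-1}\twoheadrightarrow Z^n(X^\bu)$ is \emph{pure} --- this is exactly what pure acyclicity of $X^\bu$ supplies --- so the pure\+projective module $P^n$ lifts $g^n$ to a map $h^n\colon P^n\rarrow X^{n-1}$ with $d_X^{n-1}h^n=g^n$. Thus every morphism from a bounded above complex of pure\+projective modules to a pure acyclic complex is homotopic to zero; in particular this holds for the brutal truncations of an arbitrary complex of pure\+projectives and for their shifts.

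To pass to an arbitrary $P^\bu$, I would write it as the colimit of its brutal truncations $\sigma_{\le n}P^\bu$ (obtained by replacing $P^m$ by~$0$ for $m>n$). The transition maps $\sigma_{\le n}P^\bu\rarrow\sigma_{\le n+1}P^\bu$ are degreewise split monomorphisms, so there is a degreewise split short exact sequence of complexes
\[
 0\rarrow\bigoplus_{n\in\boZ}\sigma_{\le n}P^\bu\overset{1-s}\rarrow
 \bigoplus_{n\in\boZ}\sigma_{\le n}P^\bu\rarrow P^\bu\rarrow0,
\]
where $s$ is the shift map; it induces a distinguished triangle in $\Hot$, exhibiting $P^\bu$ as $\cone(1-s)$ (the homotopy colimit of the $\sigma_{\le n}P^\bu$). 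Applying $\Hom_{\Hot}(-,X^\bu)$ to this triangle produces a long exact sequence in which $\Hom_{\Hot}(P^\bu,X^\bu)$ is squeezed between $\prod_n\Hom_{\Hot}(\sigma_{\le n}P^\bu,X^\bu[-1])$ and $\prod_n\Hom_{\Hot}(\sigma_{\le n}P^\bu,X^\bu)$. Both products vanish by the bounded above case --- note that $X^\bu[-1]$ is again pure acyclic --- so $\Hom_{\Hot}(P^\bu,X^\bu)=0$, which is precisely assertion~(b).

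The main obstacle, to the extent there is one, is concentrated in the bounded above case, and it is a matter of care rather than of depth: the lifting must be carried out along the \emph{pure} epimorphisms $X^{n-1}\twoheadrightarrow Z^n(X^\bu)$, since pure\+projective modules need not be projective in the usual sense. Once this is granted, the reduction of the general unbounded case to the bounded above one through the telescope triangle is purely formal.
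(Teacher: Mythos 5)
Your reduction of~(a) to~(b) and your treatment of the bounded above case are both correct: the descending induction works, and the only non-formal ingredient there is that the epimorphisms $X^{n-1}\rarrow Z^n(X^\bu)$ are pure, so that the pure-projective terms of $P^\bu$ lift along them. The fatal problem is in the passage to unbounded complexes. With cohomological differentials $d^m\:P^m\rarrow P^{m+1}$, the brutal truncation $\sigma_{\le n}P^\bu$ (kill the terms in degrees $>n$) is a \emph{quotient} complex of $P^\bu$, not a subcomplex: the degreewise inclusion $\sigma_{\le n}P^\bu\rarrow\sigma_{\le n+1}P^\bu$ that you use fails to commute with the differential in degree~$n$ unless $d^n=0$. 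The natural morphisms go the other way, $\sigma_{\le n+1}P^\bu\rarrow\sigma_{\le n}P^\bu$, so $P^\bu$ is the inverse limit, not the colimit, of these truncations, and your telescope sequence is not a short exact sequence of complexes. If one instead uses the genuine subcomplexes $\sigma_{\ge -n}P^\bu$, whose union is $P^\bu$, the telescope triangle does exist, but those truncations are bounded \emph{below}, and the descending induction gives nothing for them (there is no degree at which to start it, and the assertion for bounded below complexes is exactly as hard as the unbounded case).

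This gap cannot be repaired by more careful bookkeeping: your argument uses pure acyclicity of $X^\bu$ and pure-projectivity of the $P^n$ only through the lifting property along the epimorphisms $X^{n-1}\rarrow Z^n(X^\bu)$, so the identical formal scheme, run with ordinary projectives and ordinary acyclic complexes, would prove that every morphism from an arbitrary complex of projective modules to an arbitrary acyclic complex is null-homotopic. That is false: the identity morphism of the non-contractible acyclic complex of projectives~\eqref{unbounded-acyclic-classical} over $k[\epsilon]/(\epsilon^2)$ from the introduction is a counterexample. The theorem is genuinely non-formal, and the paper does not prove it by a direct homotopy construction; it reduces it (in the form of Theorem~\ref{stovicek-pure-neeman}) to Neeman's Theorem~\ref{neeman-theorem} by interpreting right $R$\+modules as flat modules over the additive category $\cR$ of finitely presented right $R$\+modules, as in Lemma~\ref{pure-exact-structure-as-flat-modules}, under which pure-projective modules become projective $\cR$\+modules and pure acyclic complexes become acyclic complexes of flat $\cR$\+modules with flat cocycles. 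Neeman's theorem (or \v St\!'ov\'\i\v cek's argument in~\cite{Sto}) is the substantive input your proof is missing.
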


 Part~(a) of Theorem~\ref{stovicek-pure-projective-modules} is
obviously a particular case of part~(b).
 Theorem~\ref{stovicek-pure-projective-modules}(a) is an equivalent 
restatement of Theorem~\ref{simson-pure-proj-periodic}.

 Theorems~\ref{simson-pure-proj-periodic}
and~\ref{stovicek-pure-projective-modules} are essentially equivalent
to Theorems~\ref{benson-goodearl-flat-projective}
and~\ref{neeman-flat-projective}, respectively.
 To deduce Theorems~\ref{benson-goodearl-flat-projective}
and~\ref{neeman-flat-projective} from
Theorems~\ref{simson-pure-proj-periodic}
and~\ref{stovicek-pure-projective-modules}, it suffices to observe that
any short exact sequence of flat modules is pure, any projective module
is pure-projective, and any flat pure-projective module is projective.

 To deduce Theorems~\ref{simson-pure-proj-periodic}
and~\ref{stovicek-pure-projective-modules} from
Theorems~\ref{benson-goodearl-flat-projective}
and~\ref{neeman-flat-projective}, one has to interpret the essentially
small additive category of finitely presented right $R$\+modules as
a ``ring with many objects''~$\cR$.
 Then pure-projective right $R$\+modules are the same things as
projective right $\cR$\+modules, while arbitrary right $R$\+modules are
the same things as flat right $\cR$\+modules, and pure exact sequences
of right $R$\+modules correspond to exact sequences of flat right
$\cR$\+modules.
 Applying Theorems~\ref{benson-goodearl-flat-projective}
and~\ref{neeman-flat-projective} to right $\cR$\+modules yields
Theorems~\ref{simson-pure-proj-periodic}
and~\ref{stovicek-pure-projective-modules} for right $R$\+modules
(respectively).

 The following theorem of \v St\!'ov\'\i\v cek~\cite{Sto} is a pure
version of Theorem~\ref{stovicek-fp-injective-inj-periodic-module}.

\begin{thm}[\v St\!'ov\'\i\v cek~{\cite[Corollary~5.5]{Sto}}]
\label{stovicek-pure-inj-periodic-module}
 Let $R$ be a ring; denote by\/ $\PInj$ the class of all
pure-injective $R$\+modules.
 Then any pure\/ $\PInj$\+periodic $R$\+module is pure-injective.
\end{thm}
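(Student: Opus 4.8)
The plan is to transport the problem, via the correspondence recalled above between the pure exact structure on the category of right $R$-modules and the exact category of flat modules over the associated ring with many objects, into an ordinary module category, where it becomes an instance of the cotorsion periodicity theorem (Theorem~\ref{bazzoni-cortes-estrada-cotorsion-periodic}). As before, let $\cR$ denote the essentially small additive category of finitely presented right $R$-modules, viewed as a ring with many objects, so that the category of right $R$-modules with its pure exact structure is identified with the exact category of flat right $\cR$-modules; under this identification, pure-exact sequences of $R$-modules become the short exact sequences of right $\cR$-modules with all three terms flat, and pure-projective $R$-modules become the projective $\cR$-modules. The only additional ingredient I would record is the standard fact that, under the same identification, the pure-injective $R$-modules are precisely the \emph{flat and cotorsion} right $\cR$-modules. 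Indeed, since flat modules are closed under extensions, the injective objects of the exact category of flat $\cR$-modules are exactly the flat modules $F$ with $\Ext^1_\cR(G,F)=0$ for every flat $G$, that is, the flat cotorsion ones; and, since the embedding is fully faithful and hence preserves and reflects split exactness, these correspond exactly to the $R$-modules $M$ such that every pure-exact sequence $0\rarrow M\rarrow Y\rarrow Z\rarrow 0$ splits, i.e., to the pure-injective $R$-modules.

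Granting this, the argument is short. Let $M$ be a pure $\PInj$\+periodic right $R$-module, with a pure-exact sequence $0\rarrow M\rarrow A\rarrow M\rarrow 0$ in which $A$ is pure-injective. It corresponds to a short exact sequence $0\rarrow F\rarrow G\rarrow F\rarrow 0$ of right $\cR$-modules in which $F$ is flat and $G$ is flat cotorsion; in particular $G$ is cotorsion, so $F$ is $\Cot$\+periodic in the category of right $\cR$-modules. By the cotorsion periodicity theorem, $F$ is cotorsion; being also flat, it is flat cotorsion, hence corresponds to a pure-injective $R$-module. Therefore $M$ is pure-injective.

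The step I expect to be the main obstacle is the invocation of cotorsion periodicity in the right generality: Theorem~\ref{bazzoni-cortes-estrada-cotorsion-periodic} is quoted above only for modules over an ordinary ring, whereas it is needed here for modules over the ring with many objects $\cR$, equivalently in the locally finitely presentable Grothendieck category of right $\cR$-modules. I would handle this by checking that the proof of cotorsion periodicity carries over verbatim to this level of generality, or by appealing to a version of it already formulated for such categories. For completeness I would also note the homological reformulation — every pure acyclic complex of pure-injective $R$-modules is contractible — which is equivalent to the statement of the theorem by the usual dictionary between $\sA$\+periodicity and cocycles of acyclic complexes with terms in $\sA$ (applicable here because $\PInj$ is closed under products and direct summands and the pure exact structure has exact products); this homological form is \cite[Theorem~5.4]{Sto}, and it can alternatively be established directly by a purity argument dual to the proof of Theorem~\ref{stovicek-pure-projective-modules}.
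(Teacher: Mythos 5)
Your proof is correct, but it takes a genuinely different route from the paper's. The paper obtains Theorem~\ref{stovicek-pure-inj-periodic-module} from the fp\+injective $\Inj$\+periodicity theorem (Theorem~\ref{stovicek-fp-injective-inj-periodic-module}) via the \emph{tensor} embedding $M\longmapsto M\ot_R{-}$ into covariant functors on finitely presented \emph{left} $R$\+modules: under that identification arbitrary right $R$\+modules become fp\+injective left $\cL$\+modules and pure-injective right $R$\+modules become injective left $\cL$\+modules (this is the dictionary spelled out at the end of Section~\ref{introd-pure-periodicity}, which requires checking that $\cL$ is left coherent of weak global dimension at most~$2$). You instead reuse the \emph{Hom} embedding into flat right $\cR$\+modules of Lemma~\ref{pure-exact-structure-as-flat-modules} --- the same one the paper uses for pure-\emph{projective} periodicity --- together with the correct and standard observation that the injective objects of the exact category $\Modrfl\cR$ are exactly the flat cotorsion modules (closure of flat modules under extensions is precisely what makes both directions of your verification work), and then you invoke cotorsion periodicity, Theorem~\ref{bazzoni-cortes-estrada-cotorsion-periodic}, over~$\cR$; the periodicity step itself is then immediate and valid. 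The cost you rightly flag --- needing the quoted periodicity theorem for modules over a ring with many objects rather than a unital ring --- is actually symmetric between the two routes, since the paper likewise applies Theorem~\ref{stovicek-fp-injective-inj-periodic-module} to left $\cL$\+modules, and such transfers are routine (compare the proof of Theorem~\ref{neeman-theorem}). What your route buys is uniformity (a single embedding $\cR$ serves both the pure-projective and the pure-injective statements, and avoids the more delicate identification of all right $R$\+modules with fp\+injective $\cL$\+modules) and the somewhat notable observation that pure-injective periodicity can be reached from the cotorsion-periodicity theorem of Bazzoni--Cort\'es-Izurdiaga--Estrada rather than from the $\Inj$\+periodicity theorem; the paper's route keeps the injective-side results together and relies only on results already proved in~\cite{Sto}.
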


 Theorem~\ref{stovicek-pure-inj-periodic-module} is a module-theoretic
formulation of pure-injective periodicity.
 The following theorem from~\cite{Sto} provides a homological
formulation, which is a pure version of
Theorem~\ref{stovicek-fp-injective-injective-modules}.

\begin{thm}[\v St\!'ov\'\i\v cek~{\cite[Theorem~5.4 and
Corollary~5.5]{Sto}}]
\label{stovicek-pure-injective-modules}
 Let $R$ be a ring.  Then \par
\textup{(a)} any pure acyclic complex of pure-injective $R$\+modules
is contractible; \par
\textup{(b)} moreover, if $J^\bu$ is a complex of pure-injective
$R$\+modules and $X^\bu$ is a pure acyclic complex of $R$\+modules, then
any morphism of complexes $X^\bu\rarrow J^\bu$ is homotopic to zero.
\end{thm}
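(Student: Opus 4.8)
The plan is to realize Theorem~\ref{stovicek-pure-injective-modules} --- the ``pure version'' of Theorem~\ref{stovicek-fp-injective-injective-modules} indicated above --- as a special case of the latter, by applying Theorem~\ref{stovicek-fp-injective-injective-modules} in a suitable functor category. This is the same ``ring with many objects'' reduction by which Theorem~\ref{stovicek-pure-projective-modules} was obtained above from Theorem~\ref{neeman-flat-projective}, only in its fp\+injective rather than flat guise. As in the earlier cases, part~(a) is the particular case of part~(b) with $X^\bu=J^\bu$ and the identity morphism (it is also an equivalent restatement of Theorem~\ref{stovicek-pure-inj-periodic-module}), so I would concentrate on part~(b).

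The first step is to recall the tensor realization of right $R$\+modules. Let $\sA$ be the essentially small additive category of finitely presented left $R$\+modules, and let $\sG=\Funct(\sA,\Ab)$ be the category of additive functors $\sA\rarrow\Ab$, viewed as a category of modules over a ring with many objects. Then $\sG$ is a locally coherent Grothendieck category (a standard fact, resting on $\sA$ being closed under cokernels). There is the tensor embedding $\tau\colon M\longmapsto M\otimes_R{-}$ from right $R$\+modules into $\sG$, and I would invoke the following classical properties of it (in the circle of ideas of Gruson--Jensen, Jensen--Lenzing, Prest): $\tau$ is additive and fully faithful; a short exact sequence of right $R$\+modules is pure if and only if $\tau$ carries it to a short exact sequence in $\sG$ (a direct unwinding of the definition of purity as exactness of $M\otimes_R{-}$ on the sequence for every finitely presented left module $M$); the essential image of $\tau$ is precisely the class of fp\+injective objects of $\sG$; and $\tau$ restricts to an equivalence between the class of pure-injective $R$\+modules and the class of injective objects of $\sG$.

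The second step is the deduction. Let $X^\bu$ be a pure acyclic complex of $R$\+modules, $J^\bu$ a complex of pure-injective $R$\+modules, and $f\colon X^\bu\rarrow J^\bu$ a morphism of complexes. Since $X^\bu$ is pure acyclic, $Z^n(X^\bu)\hookrightarrow X^n$ is a pure monomorphism, so each short exact sequence $0\rarrow Z^n(X^\bu)\rarrow X^n\rarrow Z^{n+1}(X^\bu)\rarrow0$ is pure; applying $\tau$ and the properties just listed, we obtain that $\tau X^\bu$ is an acyclic complex in $\sG$ whose terms $\tau X^n$ and whose cocycles (which are the $\tau Z^n(X^\bu)$) are all fp\+injective objects of $\sG$. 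On the other hand, $\tau J^\bu$ is a complex of injective objects of $\sG$. Hence $\tau f\colon\tau X^\bu\rarrow\tau J^\bu$ is a morphism of precisely the kind handled by Theorem~\ref{stovicek-fp-injective-injective-modules}(b), applied in the locally coherent Grothendieck category $\sG$, and it is therefore homotopic to zero. As $\tau$ is additive and fully faithful, a homotopy between $\tau f$ and zero is the $\tau$\+image of a homotopy between $f$ and zero; so $f$ is homotopic to zero. This proves part~(b), and hence part~(a).

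I expect the main obstacle to be the bundle of ``dictionary'' properties of the tensor embedding listed in the second paragraph --- in particular, the identification of right $R$\+modules with the fp\+injective objects of $\sG$, of the pure-injective modules with the injective objects of $\sG$, and of purity with exactness in $\sG$. These properties are classical and not difficult to verify (once $\sG$ is known to be locally coherent, each is a fairly direct computation), but they constitute the one genuinely non-formal ingredient; granting them, the argument above is a verbatim analogue of the passage from Theorem~\ref{neeman-flat-projective} to Theorem~\ref{stovicek-pure-projective-modules}. A subsidiary point that needs attention is that Theorem~\ref{stovicek-fp-injective-injective-modules} is used here over a locally coherent Grothendieck category, rather than only over a ring; this level of generality is available in~\cite{Sto}.
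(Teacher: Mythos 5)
Your proposal is correct and follows essentially the same route the paper itself outlines in Section~\ref{introd-pure-periodicity}: interpret the finitely presented left $R$\+modules as a ring with many objects, use the tensor embedding $M\longmapsto M\ot_R{-}$ to identify right $R$\+modules with fp\+injective functors, pure-injective modules with injective functors, and pure exactness with exactness of fp\+injectives, and then apply Theorem~\ref{stovicek-fp-injective-injective-modules}(b) in that functor category. The ``dictionary'' facts you flag as the non-formal ingredient are exactly the ones the paper verifies in the last two paragraphs of that subsection (via the coherence and weak global dimension $\le 2$ of the functor category and the description of its injectives as summands of products of character modules), so nothing further is needed.
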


 Part~(a) of Theorem~\ref{stovicek-pure-injective-modules} is obviously
a particular case of part~(b).
 Theorem~\ref{stovicek-pure-injective-modules}(a) is an equivalent
restatement of Theorem~\ref{stovicek-pure-inj-periodic-module}.

 Theorems~\ref{stovicek-pure-inj-periodic-module}
and~\ref{stovicek-pure-injective-modules} are essentially equivalent to
Theorems~\ref{stovicek-fp-injective-inj-periodic-module}
and~\ref{stovicek-fp-injective-injective-modules}, respectively.
 To deduce Theorems~\ref{stovicek-fp-injective-inj-periodic-module}
and~\ref{stovicek-fp-injective-injective-modules} from
Theorems~\ref{stovicek-pure-inj-periodic-module}
and~\ref{stovicek-pure-injective-modules}, it suffices to observe that
any short exact sequence of fp\+injective modules is pure, any
injective module is pure-injective, and any fp\+injective
pure-injective module is injective.

 To deduce Theorems~\ref{stovicek-pure-inj-periodic-module}
and~\ref{stovicek-pure-injective-modules} from
Theorems~\ref{stovicek-fp-injective-inj-periodic-module}
and~\ref{stovicek-fp-injective-injective-modules}, one has to
interpret the essentially small additive category of finitely presented
left $R$\+modules as a ``ring with many objects''~$\cL$.
 Then pure-injective right $R$\+modules are the same things as
injective left $\cL$\+modules, while arbitrary right $R$\+modules
are the same things as fp\+injective left $\cL$\+modules, and
pure exact sequences of right $R$\+modules correspond to exact sequences
of fp\+injective left $\cL$\+modules.
 Applying Theorems~\ref{stovicek-fp-injective-inj-periodic-module}
and~\ref{stovicek-fp-injective-injective-modules} to left $\cL$\+modules
yields Theorems~\ref{stovicek-pure-inj-periodic-module}
and~\ref{stovicek-pure-injective-modules} for right $R$\+modules
(respectively).

 The assertions about left $\cL$\+modules from the previous paragraph
are more counterintuitive than the discussion of right $\cR$\+modules
above, so let us explain them in more detail.
 By the definition, the left $\cL$\+modules are the covariant
functors to the category of abelian groups $\cL\rarrow\Ab$.
 To a right $R$\+module $M$, the tensor product functor $S\longmapsto
M\ot_R S\:\cL\rarrow\Ab$ is assigned.
 Clearly, this identifies the category of right $R$\+modules
$\Modr R$ with the category of right exact functors $\cL\rarrow\Ab$.
 The claim is that a functor $\cL\rarrow\Ab$ is right exact if and
only if it is fp\+injective as an object of the functor category
$\cL\Modl$.

 Indeed, let $G\:\cL\rarrow\Ab$ be a finitely presented left
$\cL$\+module.
 Then $G$ is the cokernel of a morphism of representable functors
$\Hom_R(S,{-})\rarrow\Hom_R(T,{-})$, where $S$, $T\in\cL$.
 The morphism of representable functors is induced by a morphism of
finitely presented left $R$\+modules $T\rarrow S$.
 So we have a $4$\+term exact sequence $0\rarrow\Hom_R(S/T,{-})\rarrow
\Hom_R(S,{-})\rarrow\Hom_R(T,{-})\rarrow G\rarrow0$ of functors
$\cL\rarrow\Ab$.
 Here $S/T$ is a notation for the cokernel of the morphism $T\rarrow S$,
which is also a finitely presented left $R$\+module.
 Now it is clear that the ``ring with many objects'' $\cL$ is left
coherent of weak global dimension at most~$2$.
 Hence a left $\cL$\+module $F$ is fp\+injective if and only if
$\Ext^n_\cL(G,F)=0$ for all finitely presented left $\cL$\+modules $G$
and $n\ge1$.
 Using the projective resolution above, the Ext groups in question
are computed by the complex $F(T)\rarrow F(S)\rarrow F(S/T)\rarrow0$
(since $\Hom_\cL(\Hom_R(S,{-}),F)=F(S)$ and similarly for $T$
and~$S/T$),  so they vanish for all $S$, $T\in\cL$ if and only if
the functor $F\:\cL\rarrow\Ab$ is right exact.

 To show that the same correspondence identifies pure-injective right
$R$\+modules with injective left $\cL$\+modules, notice that
the injective left $\cL$\+modules are the direct summands of
the products of the functors $S\longmapsto\Hom_R(S,T)^+\simeq
S\ot_R T^+$, where $M^+$ denotes the character group/module
$M^+=\Hom_\boZ(M,\boQ/\boZ)$ and $S$, $T\in\cL$.
 On the other hand, the pure-injective right $R$\+modules are
the direct summands of the products of the character modules $T^+$
of finitely presented left $R$\+modules~$T$
\,\cite[Corollary~2.30(a)]{GT}.

\subsection{{}}
 The main results of this paper are formulated and proved in
the category-theoretic setting.
 Let $\sK$ be a locally finitely presentable abelian category (any
such category is Grothendieck).
 Just as for modules, an object $J\in\sK$ is called \emph{fp\+injective}
if $\Ext^1_\sK(T,J)=0$ for all finitely presentable objects $T\in\sK$.
 An object $P\in\sK$ is called \emph{fp\+projective} if
$\Ext^1_\sK(P,J)=0$ for all fp\+injective objects $J\in\sK$.

 Furthermore, an object $J\in\sK$ is said to be \emph{strongly
fp\+injective} if $\Ext^n_\sK(T,J)=0$ for all finitely presentable
objects $T\in\sK$ and all integers $n\ge1$.
 An object $P\in\sK$ is said to be \emph{weakly fp\+projective} if
$\Ext^1_\sK(P,J)=0$ for all strongly fp\+injective objects $J\in\sK$.
 The following two theorems are our main results.

\begin{thm}[Corollary~\ref{lfp-category-weakly-fp-periodicity-cor}
below] \label{fp-proj-periodic-object}
 Let\/ $\sK$ be a locally finitely presentable abelian category;
denote by\/ $\FpProj$ the class of all fp\+projective objects
in\/~$\sK$.
 Then every\/ $\FpProj$\+periodic object in\/ $\sK$ is weakly
fp\+projective.
\end{thm}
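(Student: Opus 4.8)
The plan is to follow the strategy indicated in the introduction: reduce $\FpProj$\+periodicity to the strong pure-projective periodicity of Theorem~\ref{stovicek-pure-projective-modules} (or rather its category-theoretic analogue) together with the Hill lemma. First I would pass to the functor-category picture. Since $\sK$ is locally finitely presentable, every object is a filtered colimit of finitely presentable ones, and the pure exact structure on $\sK$ is controlled by the class $\sD$ of finitely presentable objects: a short exact sequence is pure if and only if $\Hom_\sK(T,{-})$ keeps it exact for all $T\in\sD$. The fp\+projective objects are exactly the direct summands of $\sD$\+filtered objects (transfinite extensions of finitely presentable objects), by the usual Eklof--Trlifaj deconstruction of the cotorsion pair $(\FpProj,\text{fp\+injective})$. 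In particular every fp\+projective object is pure-projective, so an $\FpProj$\+periodic object $M$, sitting in a short exact sequence $0\to M\to A\to M\to0$ with $A$ fp\+projective, is automatically \emph{pure}\/ $\PProj$\+periodic, hence pure-projective by Theorem~\ref{stovicek-pure-projective-modules}(a). Thus from the outset we may assume $M$ itself is pure-projective, i.e.\ a direct summand of a coproduct of finitely presentable objects.

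The next step is to upgrade this to weak fp\+projectivity, and here the key input is the \emph{strong} form of pure-projective periodicity, namely part~(b) of Theorem~\ref{stovicek-pure-projective-modules}: any morphism from a complex of pure-projective objects to a pure acyclic complex is null-homotopic. I would run this on the periodicity data directly. The short exact sequence $0\to M\xrightarrow{\iota} A\xrightarrow{\pi} M\to 0$ splices into a two-periodic acyclic complex $A^\bu$ with all terms equal to $A$ and differential $\iota\pi$, whose cocycles are all copies of $M$; since $A$ is fp\+projective hence pure-projective and $M$ is pure-projective, this complex is pure acyclic and consists of pure-projectives, so it is contractible. Contractibility gives a splitting $A\cong M\oplus M$ compatible with the maps, which of course does not immediately force $M$ to be weakly fp\+projective — the point is rather to use the contracting homotopy to witness vanishing of $\Ext^1_\sK(M,J)$ for strongly fp\+injective $J$. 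Concretely: given a strongly fp\+injective $J$ and a class $\xi\in\Ext^1_\sK(M,J)$, I would pull it back along $\pi$ and push it forward along $\iota$ and use the periodicity plus the fact that on a pure-projective object the pure-injective-type obstructions already vanish; the Hill lemma enters to reduce a general strongly fp\+injective target, or a general fp\+projective $A$, to a well-ordered continuous filtration with finitely presentable (or small) successive quotients, so that the required Ext-vanishing can be checked one filtration step at a time and then assembled by Eklof's lemma.

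More precisely, the heart of the argument should be: (i) write the fp\+projective object $A$ as the union of a continuous well-ordered chain $0=A_0\subseteq A_1\subseteq\dots\subseteq A_\kappa=A$ with each $A_{\alpha+1}/A_\alpha$ finitely presentable, using the Hill lemma to get a large family of such subobjects closed under the relevant operations; (ii) intersect this filtration with the subobject $M\subseteq A$ and with the quotient $A\twoheadrightarrow M$, using the Hill-lemma closure properties to arrange that the filtration is \emph{compatible} with both copies of $M$, so that $M$ itself acquires a continuous well-ordered filtration whose successive quotients are finitely presentable subquotients of $A$; (iii) conclude that $M$ is $\sD$\+filtered, and since the cotorsion pair generated by $\sD$ is precisely $(\text{weakly fp\+projective},\text{strongly fp\+injective})$ — because $\Ext^n$ against finitely presentables is what defines strong fp\+injectivity, and Eklof's lemma propagates $\Ext^1$-vanishing up filtrations — deduce that $M$ is weakly fp\+projective. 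The main obstacle, and the place where real care is needed, is step~(ii): making a single Hill-family filtration of $A$ simultaneously respect the subobject $\iota(M)$ and the congruence defining $\pi$, so that the induced filtration of $M$ is genuinely continuous with finitely presentable quotients; this is exactly the kind of compatibility the Hill lemma is designed to deliver, but verifying the closure conditions and the finite-presentability of the resulting subquotients in the abstract locally finitely presentable setting (rather than for modules) is the technical crux. Everything else — the deconstructibility of $\FpProj$, the identification of the cotorsion pair cogenerated by strongly fp\+injectives, and the reduction to pure-projectivity — is either standard or already available from the cited results.
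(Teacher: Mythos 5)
Your reduction collapses at the very first step: you assert that ``every fp\+projective object is pure-projective,'' but the implication goes the other way. Pure-projective objects are the direct summands of \emph{coproducts} of finitely presentable objects, whereas fp\+projective objects are the direct summands of \emph{transfinitely iterated extensions} ($\sK_\fp$\+filtered objects) of finitely presentable objects --- a strictly larger class in general. Moreover, even if $A$ were pure-projective, the short exact sequence $0\to M\to A\to M\to 0$ in the definition of $\FpProj$\+periodicity is \emph{not} assumed pure, so Theorem~\ref{stovicek-pure-projective-objects}(a) does not apply; and the conclusion you draw from it, that $M$ is pure-projective, is simply false: Section~\ref{counterexamples-secn} of the paper is devoted to non-pure $\PProj$\+periodic (even $\Proj$\+periodic) modules that are \emph{not} pure-projective (Examples~\ref{example-Kronecker-algebra}--\ref{example-4dim-frob}). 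Your endgame (i)--(iii) is equally unsalvageable: if you could show that $M$ is $\sK_\fp$\+filtered, then Eklof's lemma would make $M$ fp\+projective, and Example~\ref{noncoherent-counterex} exhibits an $\FpProj$\+periodic module over a noncoherent ring that is not fp\+projective. (Concretely, the induced filtration on the subobject $M\subset A$ has successive quotients that are only finitely generated subobjects of finitely presentable ones, and without coherence these need not be finitely presentable; no amount of Hill-family bookkeeping fixes this.)

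The ingredients you name (Hill lemma, Eklof, the strong form of pure-projective periodicity) are the right ones, but they must be assembled at the level of \emph{complexes}, not of the single object $A$. The paper splices the periodicity sequence into the two-periodic acyclic complex $A^\bu$ and proves that $\Hom_\sK(A^\bu,Y)$ is acyclic for every \emph{strongly} fp\+injective $Y$; this yields the surjectivity of $\Hom_\sK(A,Y)\to\Hom_\sK(M,Y)$ and hence $\Ext^1_\sK(M,Y)=0$. The acyclicity of $\Hom_\sK(A^\bu,Y)$ is obtained by comparing $\Hom_\sK(A^\bu,J^\bu)$ and $\Hom_\sK(A^\bu,I^\bu)$, where $I^\bu$ is an injective coresolution of $Y$ and $J^\bu=(Y\to I^\bu)$, and the acyclicity of the former is Theorem~\ref{lfp-category-hot-hom-fp-proj-fp-inj-thm}. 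That theorem is where the Hill lemma actually enters: after replacing $A^\bu$ by a complex with $\sK_\fp$\+filtered terms, Lemma~\ref{complexes-deconstructed} filters the \emph{complex} $A^\bu$ by complexes of finitely presentable objects, Lemma~\ref{complexes-homotopy-hom-ext-lemma} converts homotopy classes into $\Ext^1$ in $\sC(\sK)$ so that Eklof's lemma applies, and the base case (complexes of finitely presentables mapping to a pure acyclic complex) is exactly Theorem~\ref{stovicek-pure-neeman}. Note also that strong fp\+injectivity of $Y$ is used precisely to guarantee that the cocycles of $J^\bu$ remain fp\+injective; this is why the conclusion is weak fp\+projectivity and not fp\+projectivity, a distinction your filtration argument would erase.
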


\begin{thm}[Theorems~\ref{lfp-category-cycles-in-fp-projective-thm}
and~\ref{lfp-category-hot-hom-fp-proj-fp-inj-thm} below]
\label{fp-proj-fp-inj-objects}
 Let\/ $\sK$ be a locally finitely presentable abelian category.
Then \par
\textup{(a)} every acyclic complex of fp\+projective objects in\/ $\sK$
has weakly fp\+projective objects of cocycles; \par
\textup{(b)} if $P^\bu$ is a complex of fp\+projective objects in\/
$\sK$ and $J^\bu$ is an acyclic complex of fp\+injective objects with
fp\+injective objects of cocycles, then any morphism of complexes
$P^\bu\rarrow J^\bu$ is homotopic to zero.
\end{thm}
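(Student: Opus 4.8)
The plan is to prove part~(b) first and to derive part~(a) from it by a formal argument with cotorsion pairs on complexes. For~(b), I would reduce to the pure-projective periodicity Theorem~\ref{stovicek-pure-projective-modules}(b) in its locally finitely presentable form --- the ``strong form'' referred to in the abstract --- via a transfinite recursion built on the Hill lemma~\cite{GT}. So let $f\colon P^\bu\to J^\bu$ be a morphism with $P^\bu$ a complex of fp\+projective objects and $J^\bu$ an acyclic complex of fp\+injective objects with fp\+injective objects of cocycles. First one notes that $J^\bu$ is \emph{pure acyclic}: any short exact sequence all of whose terms are fp\+injective is pure, since applying $\Hom_\sK(T,-)$ for a finitely presentable~$T$ keeps it exact, and the sequences $0\to Z^n(J^\bu)\to J^n\to Z^{n+1}(J^\bu)\to0$ are of this kind. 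Hence, by Theorem~\ref{stovicek-pure-projective-modules}(b) together with the fact that finitely presentable objects are pure-projective, every morphism into $J^\bu$ out of a complex of finitely presentable objects is null-homotopic --- the base case of the argument. Since the fp\+projective objects in a locally finitely presentable abelian category are exactly the direct summands of objects admitting a filtration by finitely presentable objects, and since adjoining to $P^\bu$ a complex with zero differential carrying the complementary summands makes every term a transfinite extension of finitely presentable objects while a null-homotopy of the enlarged morphism restricts to one of~$f$, I may assume that every term of $P^\bu$ itself admits such a filtration.

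I would then construct the null-homotopy of $f$ by transfinite recursion along a continuous chain of subcomplexes $0=C^\bu_0\subseteq C^\bu_1\subseteq\cdots\subseteq C^\bu_\nu=P^\bu$ whose successive quotients $C^\bu_{\beta+1}/C^\bu_\beta$ are complexes of finitely presentable objects. At a successor step, given a null-homotopy $s_\beta$ of $f|_{C^\bu_\beta}$, one extends $s_\beta$ degreewise to $C^\bu_{\beta+1}$ --- the obstruction in degree~$n$ lies in $\Ext^1_\sK\big((C^\bu_{\beta+1}/C^\bu_\beta)^n,\,J^{n-1}\big)$, which vanishes because the quotient term is finitely presentable and $J^{n-1}$ is fp\+injective --- and then corrects this extension by a null-homotopy of the chain map $C^\bu_{\beta+1}/C^\bu_\beta\to J^\bu$ it induces, which exists by the base case; the correction vanishes on $C^\bu_\beta$, so the $s_\beta$ cohere and one passes to unions at limit steps. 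Producing such a chain is where the Hill lemma enters. When the terms of $P^\bu$ are directed unions of their finitely presentable subobjects, one well-orders all those subobjects and adjoins, at each step, a bounded-below subcomplex of $P^\bu$ with finitely presentable terms containing the next subobject; this settles the case of complexes with such terms, on the basis of the previous stage. In general, the Hill lemma applied to the filtrations of the terms of $P^\bu$ supplies families $\mathcal H_n$ of subobjects, closed under sums and intersections, dense for countable subsets, and with finitely-presentably-filtered (hence, as one checks, directed-union-of-finitely-presentable) subquotients; a back-and-forth closure run simultaneously through all degrees then yields a chain whose successive quotients are complexes of the latter kind, reducing the general case to the previous one. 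Thus the recursion is carried out in two passes of the same mechanism, grounded first on pure-projective periodicity and then on the Hill lemma together with the first pass.

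For part~(a): the class $\sfp$ of strongly fp\+injective objects is closed under extensions and under cosyzygies --- both immediate from the definition --- and $\FpProj\subseteq\wfp$ because every strongly fp\+injective object is fp\+injective; hence $(\wfp,\sfp)$ is a hereditary complete cotorsion pair and induces, by Gillespie's construction~\cite{Gil}, a complete cotorsion pair $\big(\mathrm{dg}\widetilde{\wfp},\,\widetilde{\sfp}\big)$ on the category of complexes, where $\widetilde{\wfp}$ and $\widetilde{\sfp}$ denote the acyclic complexes with objects of cocycles in $\wfp$, respectively in $\sfp$, and where $\mathrm{dg}\widetilde{\wfp}$ meets the acyclic complexes exactly in $\widetilde{\wfp}$. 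Now let $P^\bu$ be an acyclic complex of fp\+projective objects. Every complex in $\widetilde{\sfp}$ is an acyclic complex of fp\+injective objects with fp\+injective objects of cocycles, so part~(b) makes every morphism $P^\bu\to Y^\bu$ with $Y^\bu\in\widetilde{\sfp}$ null-homotopic; since also $P^n\in\FpProj\subseteq\wfp$, this shows $P^\bu\in\mathrm{dg}\widetilde{\wfp}$, and being acyclic $P^\bu$ therefore lies in $\widetilde{\wfp}$. Thus the objects of cocycles of $P^\bu$ are weakly fp\+projective, which is~(a).

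I expect the delicate point to be the combinatorial core of~(b): using the Hill lemma to filter the \emph{complex} $P^\bu$ so that its successive quotients are complexes of finitely presentable objects (passing through the intermediate reduction to complexes whose terms are directed unions of finitely presentable subobjects), and verifying that the partial null-homotopies cohere through the transfinite recursion. It is exactly this need to coordinate the construction across all degrees of the complex at once, rather than degree by degree, that forces the use of the full strength of pure-projective periodicity --- null-homotopy of morphisms into \emph{every} pure acyclic complex, not merely the periodicity statement --- together with the Hill lemma.
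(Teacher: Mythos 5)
Your proposal is correct and follows essentially the same route as the paper: part~(b) is reduced --- after arranging for the terms of $P^\bu$ to be filtered by finitely presentable objects and deconstructing the resulting complex, via the Hill lemma, into complexes of finitely presentable objects (this is exactly Lemma~\ref{complexes-deconstructed}) --- to the strong form of pure-projective periodicity, Theorem~\ref{stovicek-pure-projective-objects}(b), and part~(a) is deduced from part~(b) by means of the hereditary complete cotorsion pair $(\sK_\proj^\wfp,\sK_\inj^\sfp)$. The only differences are in packaging: the paper runs your transfinite recursion through the Eklof lemma applied to $\Ext^1_{\sC(\sK)}({-},J^\bu[-1])\simeq\Hom_{\Hot(\sK)}({-},J^\bu)$ instead of constructing and gluing the partial null-homotopies by hand, and in part~(a) it proves directly, via an augmented injective coresolution of a strongly fp\+injective object, the one inclusion of the equality you cite from Gillespie's machinery that is actually used (namely, that an acyclic complex which is ``dg'' against the acyclic complexes with strongly fp\+injective cocycles has weakly fp\+projective cocycles --- the half that is formal for hereditary pairs).
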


 Theorem~\ref{fp-proj-fp-inj-objects}(a) is an equivalent restatement
of Theorem~\ref{fp-proj-periodic-object}.
 In the language of~\cite[Definition~3.3]{Gil},
Theorem~\ref{fp-proj-fp-inj-objects}(b) tells
that any complex of fp\+projective objects in a locally finitely
presentable abelian category is dg\+fp\+projective.
 Our proof of Theorem~\ref{fp-proj-fp-inj-objects}(a) deduces it from
Theorem~\ref{fp-proj-fp-inj-objects}(b), while the latter, in turn, is
obtained by reducing the problem to
Theorem~\ref{stovicek-pure-projective-objects}(b) stated below.

 The classes of fp\+projective and weakly fp\+projective objects in
$\sK$ coincide if and only if the category $\sK$ is locally coherent.
 In any locally finitely presentable abelian category that is not
locally coherent, a non-fp-projective (but weakly fp\+projective!)
$\FpProj$\+periodic object exists.
 Here a locally finitely presentable abelian category $\sK$ is said
to be \emph{locally coherent} if the kernel of any morphism acting
between two finitely presentable objects in $\sK$ is finitely
presentable~\cite[Section~2]{Ro}.
 We refer to~\cite[Section~13]{PS3} or~\cite[Sections~8.1\+-8.2]{PS5}
for an additional discussion of locally finitely presentable
and locally coherent abelian categories.

\subsection{{}}
 Let us now formulate the categorical versions of the pure-projective
periodicity results that we use.
 A short exact sequence in an locally finitely presentable abelian
category $\sK$ is called \emph{pure} if it stays exact after applying
the functor $\Hom_\sK(T,{-})$ from any finitely presentable object
$T\in\sK$.
 The class of all pure exact sequences defines an exact category
structure on $\sK$, called the \emph{pure exact structure}.
 Hence the notions of \emph{pure acyclic complexes}, as well as
\emph{pure-projective} and \emph{pure-injective objects}, in a locally
finitely presentable abelian category~$\sK$.
 Given a class of objects $\sA\subset\sK$, an object $M\in\sK$ is
said to be \emph{pure\/ $\sA$\+periodic} if there exists a pure short
exact sequence $0\rarrow M\rarrow A\rarrow M\rarrow0$ with $A\in\sA$.

\begin{thm}[\v St\!'ov\'\i\v cek~{\cite[Corollary~5.5]{Sto}}]
\label{stovicek-pure-proj-periodic-object}
 Let\/ $\sK$ be a locally finitely presentable abelian category;
denote by\/ $\PProj$ the class of all pure-projective objects
in\/~$\sK$.
 Then any pure\/ $\PProj$\+periodic object in\/ $\sK$ is
pure-projective.
\end{thm}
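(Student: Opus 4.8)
The plan is to reduce Theorem~\ref{stovicek-pure-proj-periodic-object} to the module case, namely Theorem~\ref{simson-pure-proj-periodic}, by the ``ring with many objects'' device already sketched in the introduction. Concretely, let $\cR$ denote the essentially small additive category of finitely presentable objects in~$\sK$; since $\sK$ is locally finitely presentable, the functor $M\longmapsto\Hom_\sK(-,M)\colon\sK\rarrow\cR\Modl$ is fully faithful, identifies $\sK$ with the category of flat (equivalently, filtered-colimit-of-representables) left $\cR$\+modules, identifies pure-projective objects of $\sK$ with projective $\cR$\+modules, and sends pure short exact sequences in $\sK$ to short exact sequences of flat $\cR$\+modules (and conversely, short exact sequences of flat $\cR$\+modules pull back to pure short exact sequences in $\sK$). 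The first step is therefore to state and verify these four compatibilities carefully; they are standard, but the passage from ``module over a ring'' to ``module over a ring with many objects'' should be spelled out, perhaps citing the discussion in~\cite[Section~13]{PS3} or~\cite[Sections~8.1\+-8.2]{PS5}.

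Granting that dictionary, the argument is immediate. Suppose $M\in\sK$ is pure $\PProj$\+periodic, so there is a pure short exact sequence $0\rarrow M\rarrow A\rarrow M\rarrow0$ with $A$ pure-projective. Applying $\Hom_\sK(-,{=})$ and using the identifications above, we obtain a short exact sequence $0\rarrow \widetilde M\rarrow \widetilde A\rarrow \widetilde M\rarrow0$ of flat left $\cR$\+modules with $\widetilde A$ projective; that is, $\widetilde M$ is a flat $\Proj$\+periodic $\cR$\+module. Then Theorem~\ref{simson-pure-proj-periodic}, applied with the ring with many objects $\cR$ in place of~$R$ (its proof, like that of Theorem~\ref{benson-goodearl-flat-projective}, goes through verbatim in the many-objects setting), yields that $\widetilde M$ is projective as an $\cR$\+module. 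Transporting back along the equivalence, $M$ is pure-projective in~$\sK$, as desired.

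An alternative, more self-contained route avoids invoking $\cR$ altogether and instead reuses Theorem~\ref{stovicek-pure-projective-modules}(b) directly at the level of~$\sK$: given the periodicity sequence, splice it into a pure acyclic complex $X^\bu$ of the form $\dotsb\rarrow A\rarrow A\rarrow A\rarrow\dotsb$ with $M$ as the object of cocycles, then resolve $X^\bu$ by a complex $P^\bu$ of pure-projectives with a pure quasi-isomorphism $P^\bu\rarrow X^\bu$ (possible since $\sK$ has enough pure-projectives), observe $P^\bu$ is pure acyclic, hence contractible by Theorem~\ref{stovicek-pure-projective-objects}(a), and finally split $M$ off a pure-projective using that the identity of $X^\bu$ factors through $P^\bu$ up to pure homotopy by part~(b). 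Either way, the real content is entirely borrowed from the module case.

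The main obstacle is not any deep new idea but rather the bookkeeping in the first step: one must be sure that ``flat'' in the many-objects sense is exactly the essential image of $\sK$, that idempotents split appropriately so that ``projective $\cR$\+module = pure-projective object of $\sK$'' holds on the nose (not merely up to summands), and that a short exact sequence of flat $\cR$\+modules really does correspond to a pure exact sequence in $\sK$ — this last point uses that $\Hom_\sK(T,-)$ for $T\in\cR$ corresponds to evaluation of the associated $\cR$\+module, so exactness after $\Hom_\sK(T,-)$ for all finitely presentable $T$ is precisely pointwise exactness of the $\cR$\+modules. None of this is hard, but it is the place where care is needed; once it is in place, the theorem follows with no further work.
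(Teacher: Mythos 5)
Your proposal follows essentially the same route the paper itself sketches (in the introduction, attributing the result to \v St\!'ov\'\i\v cek): identify $\sK$ with the flat modules over the ring with many objects $\cR\simeq\sK_\fp$ as in Lemma~\ref{pure-exact-structure-as-flat-modules}, so that the pure periodicity sequence becomes a short exact sequence of flat $\cR$\+modules exhibiting $\widetilde M$ as a flat $\Proj$\+periodic $\cR$\+module, and then invoke the flat/projective periodicity theorem. The one slip is that at the key step you cite Theorem~\ref{simson-pure-proj-periodic} where you mean Theorem~\ref{benson-goodearl-flat-projective} (flat $\Proj$\+periodic implies projective); Simson's theorem would also work, but it only yields that $\widetilde M$ is pure-projective, after which you would still need the standard observation that a flat pure-projective $\cR$\+module is projective.
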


\begin{thm}[\v St\!'ov\'\i\v cek~{\cite[Theorem~5.4 and
Corollary~5.5]{Sto}}]
\label{stovicek-pure-projective-objects}
 Let\/ $\sK$ be a locally finitely presentable abelian category.
Then \par
\textup{(a)} any pure acyclic complex of pure-projective objects in\/
$\sK$ is contractible; \par
\textup{(b)} moreover, if $P^\bu$ is a complex of pure-projective
objects and $X^\bu$ is a pure acyclic complex in\/~$\sK$, then
any morphism of complexes $P^\bu\rarrow X^\bu$ is homotopic to zero.
\end{thm}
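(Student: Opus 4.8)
The plan is to reduce Theorem~\ref{stovicek-pure-projective-objects} to Neeman's Theorem~\ref{neeman-flat-projective}, applied not to a ring but to the ``ring with many objects'' $\cR$ consisting of (a skeleton of) the category $\sK^{\fp}$ of finitely presentable objects of~$\sK$. This is precisely the passage carried out for module categories in Section~\ref{introd-pure-periodicity}, now in the generality of a locally finitely presentable abelian category.

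First I would introduce the restricted Yoneda functor $\Phi\:\sK\rarrow\Modr\cR$, \ $M\longmapsto\Hom_\sK({-},M)|_{\sK^{\fp}}$, where $\Modr\cR$ is the category of right $\cR$\+modules, i.e.\ additive functors $(\sK^{\fp})^{\rop}\rarrow\Ab$. The facts about $\Phi$ that I would record (all standard in the theory of locally finitely presentable categories) are: $\Phi$ is left exact, since each $\Hom_\sK(T,{-})$ is; $\Phi$ preserves filtered colimits and coproducts, since the objects $T\in\sK^{\fp}$ are finitely presentable; $\Phi$ is fully faithful, because every object of $\sK$ is a filtered colimit of finitely presentable ones; and $\Phi$ restricts to an equivalence of $\sK$ onto the full subcategory $\Modrfl\cR$ of flat right $\cR$\+modules --- equivalently, of left exact functors $(\sK^{\fp})^{\rop}\rarrow\Ab$, equivalently of filtered colimits of representable functors --- under which the finitely presentable objects of $\sK$ correspond to the representable $\cR$\+modules. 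Next I would check the two compatibilities that make the reduction work. A short exact sequence $0\rarrow A\rarrow B\rarrow C\rarrow0$ in $\sK$ is pure if and only if $0\rarrow\Phi A\rarrow\Phi B\rarrow\Phi C\rarrow0$ is exact in $\Modr\cR$: left exactness of $\Phi$ takes care of the kernel part, while surjectivity of $\Phi B\rarrow\Phi C$ evaluated at $T\in\sK^{\fp}$ is exactly surjectivity of $\Hom_\sK(T,B)\rarrow\Hom_\sK(T,C)$. Since $\Phi$ preserves coproducts, it identifies pure-projective objects of $\sK$ with projective $\cR$\+modules; and, splicing a pure acyclic complex out of pure short exact sequences --- and conversely observing that the cocycles of an acyclic complex of flat $\cR$\+modules with flat cocycles all lie in $\Modrfl\cR$, the essential image of $\Phi$, so that the whole complex does and fully faithfulness of $\Phi$ transports the factorizations back to $\sK$ --- it identifies pure acyclic complexes in $\sK$ with acyclic complexes of flat $\cR$\+modules having flat objects of cocycles.

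Granting these identifications, part~(b) is immediate. Given a complex $P^\bu$ of pure-projective objects, a pure acyclic complex $X^\bu$ in $\sK$, and a morphism $f\:P^\bu\rarrow X^\bu$, the functor $\Phi$ turns $f$ into a morphism from a complex of projective $\cR$\+modules to an acyclic complex of flat $\cR$\+modules with flat objects of cocycles; by the evident extension of Theorem~\ref{neeman-flat-projective}(b) to the ring with many objects $\cR$, this morphism is null-homotopic. Fullness of $\Phi$ makes a contracting homotopy into $\Phi$ of a system of morphisms $P^n\rarrow X^{n-1}$ in $\sK$, and faithfulness of $\Phi$ makes the homotopy identity $f=d_Xs+sd_P$ hold already in~$\sK$; so $f$ is null-homotopic. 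Part~(a) is the special case $P^\bu=X^\bu$, \,$f=\id$ (a complex is contractible precisely when its identity endomorphism is null-homotopic); alternatively, the same reduction sends a pure acyclic complex of pure-projectives to an acyclic complex of projective $\cR$\+modules with flat cocycles, contractible by Theorem~\ref{neeman-flat-projective}(a), and $\Phi$ reflects contractibility because it is additive and fully faithful.

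The step that calls for genuine care, as opposed to routine verification, is the assertion that $\Phi$ restricts to an equivalence $\sK\simeq\Modrfl\cR$ and that the pure exact structure on $\sK$ agrees with the exact structure it inherits as a full subcategory of $\Modr\cR$; this is where local finite presentability of $\sK$ is used essentially, through the description of flat functors as filtered colimits of representables and the closure of $\sK^{\fp}$ under finite colimits. The remaining point to be careful about is that Theorem~\ref{neeman-flat-projective} is invoked here in its well-known generalization from rings to small preadditive categories --- the same generalization already relied upon in Section~\ref{introd-pure-periodicity}. Everything else --- exactness of filtered colimits, behaviour of (co)products, and reflection of contractibility by a fully faithful additive functor --- is formal, since both $\sK$ and $\Modr\cR$ are Grothendieck categories.
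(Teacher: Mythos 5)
Your proposal is correct and follows essentially the same route as the paper: the paper proves this statement (restated as Theorem~\ref{stovicek-pure-neeman}) by invoking Lemma~\ref{pure-exact-structure-as-flat-modules} --- the restricted Yoneda equivalence $\sK\simeq\Modrfl\cR$ identifying the pure exact structure with the inherited one and pure-projectives with projective $\cR$\+modules --- and then applying Neeman's theorem in its ``ring with many objects'' form (Theorem~\ref{neeman-theorem}). Your write-up simply unpacks the content of that lemma and the reduction in more detail.
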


 Part~(a) of Theorem~\ref{stovicek-pure-projective-objects} is
obviously a particular case of part~(b).
 Theorem~\ref{stovicek-pure-projective-objects}(a) is an equivalent 
restatement of Theorem~\ref{stovicek-pure-proj-periodic-object}.

 Theorems~\ref{stovicek-pure-proj-periodic-object}
and~\ref{stovicek-pure-projective-objects} are still particular cases
of the exposition in~\cite{Sto}, which is written in the yet more
general setting of finitely accessible additive (not necessarily
abelian) categories.
 Nevertheless, these results are deduced in~\cite{Sto} from
Neeman's theorem stated above as Theorem~\ref{neeman-flat-projective},
which is a result about complexes of modules.

 So let us briefly repeat again the idea of the argument allowing one
to pass from Theorems~\ref{benson-goodearl-flat-projective}
and~\ref{neeman-flat-projective} to
Theorems~\ref{stovicek-pure-proj-periodic-object}
and~\ref{stovicek-pure-projective-objects}.
 For this purpose, one needs to interpret the essentially
small additive category of finitely presentable objects in $\sK$ as
a ``ring with many objects''~$\cR$.
 Then pure-projective objects of $\sK$ are the same things as projective
right $\cR$\+modules, while the whole category $\sK$ is equivalent to
the category of flat right $\cR$\+modules, and pure exact sequences
in $\sK$ correspond to exact sequences of flat right $\cR$\+modules.
 Applying Theorems~\ref{benson-goodearl-flat-projective}
and~\ref{neeman-flat-projective} to right $\cR$\+modules yields
Theorems~\ref{stovicek-pure-proj-periodic-object}
and~\ref{stovicek-pure-projective-objects} for the category~$\sK$.

 We refrain from formulating here the category-theoretic generalizations
of Theorems~\ref{stovicek-fp-injective-inj-periodic-module}\+-%
\ref{stovicek-fp-injective-injective-modules} and
\ref{stovicek-pure-inj-periodic-module}\+-%
\ref{stovicek-pure-injective-modules} (referring the reader instead to
the original exposition in~\cite[Theorem~5.4 and Corollary~5.5]{Sto}),
as these results are not used in our proofs in this paper.
 Let us only point out that (in contrast with the discussion in
the previous paragraph and in the end of
Section~\ref{introd-pure-periodicity}), these results concerning
the fp\+injective and pure-injective periodicity in locally finitely
presentable and locally accessible categories do \emph{not} seem to be
readily deducible from their particular cases for module categories.
 The reason is that for the category of right $R$\+modules there is
the accompanying category of left $R$\+modules, but an arbitrary
locally finitely presentable abelian category does not have such
a (well-behaved) counterpart.

\subsection{{}}
 Section~\ref{cotorsion-pairs-secn} contains preliminary material on
cotorsion pairs in abelian categories, particularly in Grothendieck
categories.
 The preliminaries are continued in
Section~\ref{loc-fin-pres-secn}, where we spell out the definitions
of various classes of objects in locally finitely presentable
abelian categories and their basic properties.
 Section~\ref{hill-lemma-secn} demonstrates the utility of the Hill
lemma for filtrations by finitely presentable objects in locally
finitely presentable abelian categories.

 The main results, including the weak fp\+projectivity of
$\FpProj$\+periodic objects and modules, the failure of fp\+projectivity
of $\FpProj$\+periodic objects in absence of local coherence, etc.,
are formulated and proved in Section~\ref{proofs-main-results-secn}.
 We explain how to use the fp\+projective periodicity to describe
the unbounded derived category of a locally coherent abelian category
in term of complexes of fp\+projective objects in
Section~\ref{derived-categories-secn}.
 Finally, a variety of counterexamples to non-pure
$\PProj$\+periodicity, including counterexamples of $\Proj$\+periodic
objects, are presented in Section~\ref{counterexamples-secn}.

\subsection*{Acknowledgement}
 The authors are grateful to Jan \v Saroch, Jan \v St\!'ov\'\i\v cek,
and James Gillespie for very helpful discussions and communications.
 We also wish to thank the anonymous referee for careful reading of
the manuscript and several helpful suggestions.
 This work is supported by the GA\v CR project 20-13778S
and research plan RVO:~67985840.

\Section{Cotorsion Pairs in Grothendieck Categories}
\label{cotorsion-pairs-secn}

 In this paper we are interested in Grothendieck categories, 
i.~e., cocomplete abelian categories with exact functors of directed
colimits and a set of generators.
 Any locally finitely presentable abelian category is Grothendieck,
while any Grothendieck category is locally presentable.
 All the definitions and results below in this section, suitably
stated, apply to arbitrary locally presentable abelian categories,
as explained in the papers~\cite{PR,PS4}.

 Let $\lambda$~be a regular cardinal and $\sK$ be a cocomplete abelian
category.
 Recall that an object $S\in\sK$ is said to be
\emph{$\lambda$\+presentable} if the functor
$\Hom_\sK(S,{-})\:\sK\rarrow\Sets$ preserves $\lambda$\+directed
colimits, or equivalently, the functor $\Hom_\sK(S,{-})\:\sK\rarrow
\Ab$ preserves $\lambda$\+directed colimits.
 The category $\sK$ is called \emph{locally $\lambda$\+presentable} if
it has a set of generators consisting of $\lambda$\+presentable objects.
 The category $\sK$ is called \emph{locally presentable} if it is
locally $\lambda$\+presentable for some regular cardinal~$\lambda$.
 We refer to the book~\cite{AR} for the background discussion of
presentable objects and locally presentable categories in the general
(nonadditive) category-theoretic context.

 Let $\sK$ be an abelian category.
 Given a class of objects $\sA\subset\sK$, one denotes by
$\sA^{\perp_1}\subset\sK$ the class of all objects $X\in\sK$ such that
$\Ext_\sK^1(A,X)=0$ for all $A\in\sA$.
 Dually, given a class of objects $\sB\subset\sK$, the notation
${}^{\perp_1}\sB\subset\sK$ stands for the class of all objects
$Y\in\sK$ such that $\Ext_\sK^1(Y,B)=0$ for all $B\in\sB$.

 A pair of classes of objects $(\sA,\sB)$ in $\sK$ is called
a \emph{cotorsion pair} if $\sB=\sA^{\perp_1}$ and
$\sA={}^{\perp_1}\sB$.
 Given an arbitrary class of objects $\sS\subset\sK$,
the cotorsion pair $(\sA,\sB)$ in $\sK$ with $\sB=\sS^{\perp_1}$
is said to be \emph{generated} by the class~$\sS$.

 A class of objects $\sA\subset\sK$ is said to be \emph{generating} if
every object of $\sK$ is a quotient object of an object from~$\sA$.
 Dually, a class of objects $\sB\subset\sK$ is said to be 
\emph{cogenerating} if every object of $\sK$ is a subobject of
an object from~$\sB$.

 Let $(\sA,\sB)$ be a cotorsion pair in $\sK$ such that the class $\sA$
is generating and the class $\sB$ is cogenerating.
 Under these assumptions, a cotorsion pair $(\sA,\sB)$ in $\sK$ is said
to be \emph{hereditary} if $\Ext_\sK^2(A,B)=0$ for all objects $A\in\sA$
and $B\in\sB$, or equivalently, $\Ext_\sK^n(A,B)=0$ for all $A\in\sA$,
\,$B\in\sB$, and all integers $n\ge1$.
 Equivalently, a cotorsion pair $(\sA,\sB)$ is hereditary if and only
if the class $\sA$ is closed under the kernels of epimorphisms
in $\sK$, and if and only if the class $\sB$ is closed under
the cokernels of monomorphisms in~$\sK$.
 We refer to~\cite[Lemma~1.4]{PS4} and the references therein for this
well-known characterization of hereditary cotorsion pairs.

 A cotorsion pair $(\sA,\sB)$ in $\sK$ is said to be \emph{complete} if,
for every object $K\in\sK$, there exist short exact sequences
\begin{gather}
 0\lrarrow B'\lrarrow A\lrarrow K \lrarrow0
 \label{special-precover-sequence} \\
 0\lrarrow K \lrarrow B\lrarrow A'\lrarrow0
 \label{special-preenvelope-sequence}
\end{gather}
in $\sK$ with $A$, $A'\in\sA$ and $B$, $B'\in\sB$.

 The short exact sequences~(\ref{special-precover-sequence}\+-%
\ref{special-preenvelope-sequence}) are collectively referred to as
\emph{approximation sequences}.
 The short exact sequence~\eqref{special-precover-sequence} is called
a \emph{special precover sequence}, and the short exact
sequence~\eqref{special-preenvelope-sequence} is called
a \emph{special preenvelope sequence}.

 Let $\sK$ be a Grothendieck category and $\alpha$~be an ordinal.
 An \emph{$\alpha$\+indexed filtration} on an object $F\in\sK$ is
a family of subobjects $(F_i\subset F)_{0\le i\le\alpha}$ such that
\begin{itemize}
\item $F_0=0$ and $F_\alpha=F$;
\item $F_i\subset F_j$ whenever $0\le i\le j\le\alpha$;
\item $F_j=\bigcup_{i<j}F_i$ for every limit ordinal $j\le\alpha$.
\end{itemize}

 An object $F\in\sK$ endowed with an ordinal-indexed filtration
$(F_i\subset F)_{0\le i\le\alpha}$ is said to be \emph{filtered by}
the quotient objects $(F_{i+1}/F_i\in\sK)_{0\le i<\alpha}$.
 In an alternative terminology, the object $F$ is said to be
a \emph{transfinitely iterated extension} of the objects $F_{i+1}/F_i$,
\ $0\le i<\alpha$ (\emph{in the sense of the directed colimit}).
 Given a class of objects $\sS\subset\sK$, one denotes by
$\Fil(\sS)\subset\sK$ the class of all objects of $\sK$ filtered by
(objects isomorphic to) objects from~$\sS$.

 The following result is known as
the \emph{Eklof lemma}~\cite[Lemma~6.2]{GT}.

\begin{lem} \label{eklof-lemma}
 For any class of objects\/ $\sB\subset\sK$, the class\/
${}^{\perp_1}\sB\subset\sK$ is closed under transfinitely iterated
extensions.
 In other words, ${}^{\perp_1}\sB=\Fil({}^{\perp_1}\sB)$.
\end{lem}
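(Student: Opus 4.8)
The plan is to establish the nontrivial inclusion $\Fil({}^{\perp_1}\sB)\subset{}^{\perp_1}\sB$; the reverse inclusion is immediate, since any $F\in\sK$ is filtered by itself via the two-step filtration $0\subset F$. So fix $B\in\sB$, let $F\in\sK$ carry an $\alpha$\+indexed filtration $(F_i\subset F)_{0\le i\le\alpha}$ with $F_{i+1}/F_i\in{}^{\perp_1}\sB$ for all $i<\alpha$, and let a short exact sequence $0\rarrow B\rarrow E\overset{\pi}\rarrow F\rarrow0$ be given. It suffices to prove that this sequence splits.

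First I would transport the filtration from $F$ to $E$: put $E_i=\pi^{-1}(F_i)\subset E$, i.e., $E_i$ is the pullback of $F_i\hookrightarrow F$ along~$\pi$. Then $E_0=B$ and $E_\alpha=E$, the $E_i$ form an increasing chain, and $\pi$ restricts to epimorphisms $E_i\twoheadrightarrow F_i$ with kernel~$B$, so $E_i/B\cong F_i$ naturally in~$i$. Consequently $E_{i+1}/E_i\cong(E_{i+1}/B)/(E_i/B)\cong F_{i+1}/F_i$ for every $i<\alpha$. For a limit ordinal $j\le\alpha$ one has $E_j=\bigcup_{i<j}E_i$: this is the one place where the Grothendieck hypothesis enters, since exactness of directed colimits in $\sK$ is what makes the preimage functor $\pi^{-1}$ commute with the directed union $F_j=\bigcup_{i<j}F_i$.

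The heart of the argument is a transfinite recursion on~$i$ producing a coherent family of retractions $r_i\colon E_i\rarrow B$ ($0\le i\le\alpha$) with $r_i|_B=\id_B$ and $r_j|_{E_i}=r_i$ whenever $i\le j$. Set $r_0=\id_B$. At a successor step, given $r_i$, push out the short exact sequence $0\rarrow E_i\rarrow E_{i+1}\rarrow E_{i+1}/E_i\rarrow0$ along $r_i$ to obtain a short exact sequence $0\rarrow B\rarrow E'\rarrow E_{i+1}/E_i\rarrow0$, together with a morphism of short exact sequences from the former to the latter that is $r_i$ on left-hand terms and the identity on right-hand terms. Since $E_{i+1}/E_i\cong F_{i+1}/F_i\in{}^{\perp_1}\sB$ and $B\in\sB$, the latter sequence splits; picking a retraction $\rho\colon E'\rarrow B$ and composing with the pushout map $E_{i+1}\rarrow E'$ defines $r_{i+1}\colon E_{i+1}\rarrow B$. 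The pushout square shows that $E_i\rarrow E'$ factors through $r_i$, whence $r_{i+1}|_{E_i}=\rho\circ r_i=r_i$, and then $r_{i+1}|_B=r_i|_B=\id_B$. At a limit ordinal~$j$, the compatible cocone $(r_i)_{i<j}$ over the directed union $E_j=\bigcup_{i<j}E_i$ induces a unique $r_j\colon E_j\rarrow B$ with $r_j|_{E_i}=r_i$, so in particular $r_j|_B=r_0=\id_B$. Finally $r_\alpha\colon E\rarrow B$ splits the inclusion $B\hookrightarrow E$, and therefore $\Ext_\sK^1(F,B)=0$, as required.

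The main obstacle, and the reason this explicit construction is needed, is the limit step: a purely $\Ext$\+theoretic induction does not close up. From $\Ext_\sK^1(F_{i+1}/F_i,B)=0$ one gets an injection $\Ext_\sK^1(F_{i+1},B)\hookrightarrow\Ext_\sK^1(F_i,B)$, but at a limit ordinal~$j$ the group $\Ext_\sK^1(F_j,B)$ is not simply $\varprojlim_{i<j}\Ext_\sK^1(F_i,B)$ — a $\varprojlim^1$ term assembled from the groups $\Hom_\sK(F_i,B)$ can obstruct its vanishing. Constructing a coherent system of actual splittings, rather than merely checking that each individual $\Ext$ group vanishes, is precisely what sidesteps this $\varprojlim^1$ obstruction, and beyond the abelian structure the only categorical input used is exactness of directed colimits.
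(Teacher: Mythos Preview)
Your argument is correct and is the standard proof of the Eklof lemma in a Grothendieck category; the paper itself does not spell out a proof but only cites references (to \cite{PR}, \cite{PS4}, \cite{SaoSt}, \cite{Sto-ICRA}), and the argument you give is essentially what one finds in those sources (and in the original module-theoretic version \cite[Lemma~6.2]{GT}). The only minor remark is that the paper notes the result holds ``properly understood'' in any abelian or even exact category, whereas your proof invokes exactness of directed colimits at the limit step; this is perfectly adequate here since the ambient $\sK$ in the paper is Grothendieck, but the more general formulations in the cited references recast the notion of filtration so as to avoid that hypothesis.
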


\begin{proof}
 This assertion, properly understood, holds in any abelian
category~\cite[Lemma~4.5]{PR}, \cite[Proposition~1.3]{PS4}, and even
in any exact category.
 For an exposition in the generality of Grothendieck categories and
their exact category analogues, see~\cite[Proposition~2.12]{SaoSt}
or~\cite[Proposition~5.7]{Sto-ICRA}.
\end{proof}

 Given a class of objects $\sF\subset\sK$, we denote by
$\sF^\oplus\subset\sK$ the class of all direct summands of the objects
from~$\sF$.
 The next result is called the \emph{Eklof--Trlifaj
theorem}~\cite[Theorem~6.11 and Corollary~6.13]{GT}.

\begin{thm} \label{eklof-trlifaj-theorem}
 Let $\sK$ be a Grothendieck category, $\sS\subset\sK$ be a \emph{set}
of objects, and $(\sA,\sB)$ be the cotorsion pair generated by\/~$\sS$
in\/~$\sK$.
 Then \par
\textup{(a)} if the class\/ $\sA\subset\sK$ is generating, then
the cotorsion pair $(\sA,\sB)$ is complete; \par
\textup{(b)} if the class $\Fil(\sS)\subset\sK$ is generating, then\/
$\sA=\Fil(\sS)^\oplus$.
\end{thm}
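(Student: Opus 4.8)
The plan is to run the classical transfinite pushout construction underlying the Eklof--Trlifaj theorem, in its Grothendieck-category version. Since $\sS$ is a set, I would first fix a regular cardinal $\lambda$ such that every object of $\sS$ is $\lambda$\+presentable. The core step is to produce, for an arbitrary object $K\in\sK$, a special preenvelope sequence $0\rarrow K\rarrow B\rarrow A'\rarrow0$ with $B\in\sB$ and, moreover, $A'\in\Fil(\sS)$. To this end I build a $\lambda$\+indexed continuous chain of monomorphisms $(K_i)_{0\le i\le\lambda}$ with $K_0=K$: at a successor, let $D_i$ be the coproduct of copies of objects $S\in\sS$, one copy for each pair $(S,\xi)$ with $\xi\in\Ext^1_\sK(S,K_i)$, assemble the $\xi$'s into a single class in $\Ext^1_\sK(D_i,K_i)$, and let $0\rarrow K_i\rarrow K_{i+1}\rarrow D_i\rarrow0$ be the associated short exact sequence; at limit ordinals, take the union of subobjects. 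Put $B=K_\lambda=\bigcup_{i<\lambda}K_i$. Then $A'=B/K$ carries a filtration with successive quotients the $D_i$, each of which lies in $\Fil(\sS)$ (being a coproduct of objects of $\sS$); refining, $A'\in\Fil(\sS)$. Since $\sS\subseteq\sA$, the Eklof Lemma~\ref{eklof-lemma} gives $\Fil(\sS)\subseteq\Fil(\sA)=\sA$, so $A'\in\sA$.

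The one point needing genuine care is the verification that $B\in\sB$, i.e.\ $\Ext^1_\sK(S,B)=0$ for every $S\in\sS$. This rests on the fact that any extension of $S$ by $B=\bigcup_{i<\lambda}K_i$ is induced from an extension of $S$ by some $K_i$ with $i<\lambda$ --- equivalently, that the canonical map $\varinjlim_{i<\lambda}\Ext^1_\sK(S,K_i)\rarrow\Ext^1_\sK(S,B)$ is onto. This is where the $\lambda$\+presentability of $S$ enters; in a Grothendieck category, which need not have enough projectives, it must be established through Yoneda $\Ext$ rather than a projective resolution of $S$, and I would invoke the pertinent lemmas of \cite{PR} and \cite{PS4} (the same sources cited above for the Eklof lemma). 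Granting this, a class $\xi\in\Ext^1_\sK(S,B)$ is the image of some $\eta\in\Ext^1_\sK(S,K_i)$; but the whole of $\Ext^1_\sK(S,K_i)$ maps to zero in $\Ext^1_\sK(S,K_{i+1})$ by the very construction of $D_i$ and $K_{i+1}$ (each class there is pulled back from the class defining $K_{i+1}$ along a coproduct inclusion $S\rarrow D_i$, hence dies along $K_i\rarrow K_{i+1}$), so pushing forward to $B$ kills $\xi$. Therefore $B\in\sS^{\perp_1}=\sB$, and $0\rarrow K\rarrow B\rarrow A'\rarrow0$ is the desired special preenvelope sequence.

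To complete part~(a) I would extract special precover sequences by Salce's pushout trick. Given $K$, use that $\sA$ is generating to choose an epimorphism $A_0\rarrow K$ with $A_0\in\sA$; let $N$ be its kernel, and apply the construction above to $N$ to get $0\rarrow N\rarrow B'\rarrow A''\rarrow0$ with $B'\in\sB$ and $A''\in\Fil(\sS)\subseteq\sA$. Form the pushout $P=A_0\sqcup_N B'$. Then $0\rarrow A_0\rarrow P\rarrow A''\rarrow0$ presents $P$ as an extension of objects of $\sA$, and ${}^{\perp_1}\sB$ is closed under extensions, so $P\in\sA$; while $0\rarrow B'\rarrow P\rarrow K\rarrow0$ is then a special precover sequence. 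Together with the preenvelope sequences, this establishes the completeness of $(\sA,\sB)$.

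For part~(b), the inclusion $\Fil(\sS)^\oplus\subseteq\sA$ is immediate from $\Fil(\sS)\subseteq\sA$ (just shown) together with the closure of ${}^{\perp_1}\sB$ under direct summands. For the reverse inclusion, I would rerun the precover argument of part~(a) with $\Fil(\sS)$ in place of $\sA$: the initial epimorphism is now chosen with source in $\Fil(\sS)$ (legitimate by the hypothesis), and since $\Fil(\sS)$ is itself closed under extensions (concatenate the two filtrations), the resulting pushout $P$ lies in $\Fil(\sS)$. This produces, for every $K\in\sK$, a short exact sequence $0\rarrow B'\rarrow F\rarrow K\rarrow0$ with $F\in\Fil(\sS)$ and $B'\in\sB$. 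Applying this with $K=A\in\sA$ and using $\Ext^1_\sK(A,B')=0$ forces the sequence to split, so $A$ is a direct summand of $F\in\Fil(\sS)$, i.e.\ $A\in\Fil(\sS)^\oplus$. Throughout, the only real obstacle is the colimit-surjectivity statement for $\Ext^1$ used to prove $B\in\sB$; everything else is pushout bookkeeping and repeated appeals to the Eklof lemma.
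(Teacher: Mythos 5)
Your proof is correct and is precisely the classical transfinite-small-object-argument-plus-Salce-pushout proof of the Eklof--Trlifaj theorem that the paper itself does not spell out but delegates to its references (\cite{PR}, \cite{PS4}, \cite{SaoSt}, \cite{Sto-ICRA}); the one genuinely delicate point, the surjectivity of $\varinjlim_{i<\lambda}\Ext^1_\sK(S,K_i)\rarrow\Ext^1_\sK(S,B)$ for $\lambda$\+presentable $S$ in a category without enough projectives, is correctly singled out and appropriately deferred to those same sources. No gaps.
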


\begin{proof}
 A suitable version of this result holds for any locally presentable
abelian category~$\sK$ \,\cite[Corollary~3.6 and Theorem~4.8]{PR},
\cite[Theorems~3.3 and~3.4]{PS4}.
 For an exposition in the generality of Grothendieck categories and
their exact category generalizations, see~\cite[Corollary~2.15]{SaoSt}
or~\cite[Theorem~5.16]{Sto-ICRA}.

 The difference is that, for Grothendieck categories~$\sK$, one does not
need to assume the class $\sB$ to be cogenerating, as this holds
automatically for any cotorsion pair $(\sA,\sB)$ (because there are
enough injective objects in $\sK$ and all of them belong to~$\sB$).
 Similarly, the assumption that the class $\sA$ is generating becomes
redundant when there are enough projective objects in~$\sK$.
\end{proof}

 Given a class of objects $\sA\subset\sK$, denote by
$\sA^{\perp_{\ge1}}\subset\sK$ the class of all objects $X\in\sK$
such that $\Ext^n_\sK(A,X)=0$ for all $A\in\sA$ and $n\ge1$.
 Dually, given a class of objects $\sB\subset\sK$, let
${}^{\perp_{\ge1}}\sB\subset\sK$ denote the class of all objects
$Y\in\sK$ such that $\Ext^n_\sK(Y,B)=0$ for all $B\in\sB$ and $n\ge1$.

 A class of objects $\sS\subset\sK$ is said to be \emph{self-generating}
if for any epimorphism $K\rarrow S$ in $\sK$ with $S\in\sS$ there
exist an epimorphism $S'\rarrow S$ in $\sK$ with $S'\in\sS$ and
a morphism $S'\rarrow K$ making the triangular diagram $S'\rarrow K
\rarrow S$ commutative.
 Clearly, any generating class of objects is self-generating.

\begin{lem} \label{generated-by-self-generating}
 Let\/ $\sK$ be an abelian category and\/ $\sS\subset\sK$ be
a self-generating class of objects closed under the kernels of
epimorphisms.
 Then\/ $\sS^{\perp_1}=\sS^{\perp_{\ge1}}\subset\sK$.
\end{lem}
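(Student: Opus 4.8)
The statement to prove is Lemma~\ref{generated-by-self-generating}: if $\sK$ is an abelian category and $\sS\subset\sK$ is a self-generating class closed under kernels of epimorphisms, then $\sS^{\perp_1}=\sS^{\perp_{\ge1}}$. The inclusion $\sS^{\perp_{\ge1}}\subseteq\sS^{\perp_1}$ is trivial by definition, so the real content is the reverse inclusion: given $X\in\sS^{\perp_1}$, show $\Ext^n_\sK(S,X)=0$ for all $S\in\sS$ and all $n\ge1$. The plan is to do this by induction on~$n$, using a dimension-shifting argument. The base case $n=1$ is the hypothesis $X\in\sS^{\perp_1}$.

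For the inductive step, fix $S\in\sS$ and suppose $\Ext^n_\sK(S',X)=0$ for all $S'\in\sS$; I want $\Ext^{n+1}_\sK(S,X)=0$. First I would produce an epimorphism onto $S$ from an object of $\sS$: since $\sS$ is self-generating, applying the self-generating property to the identity epimorphism $S\rarrow S$ (or, more simply, observing that a generating class is self-generating — but here we only have self-generating, so I take the identity map $S \to S$ and get an epimorphism $S' \rarrow S$ with $S'\in\sS$). Wait — more carefully, the definition only guarantees a factorization; I really want an actual epimorphism $S'\twoheadrightarrow S$ with $S'\in\sS$, and this does follow by applying self-generation to $\mathrm{id}_S$. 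Let $K=\ker(S'\rarrow S)$; since $\sS$ is closed under kernels of epimorphisms between — hmm, here $K$ is the kernel of an epimorphism \emph{from} an object of $\sS$, and the closure hypothesis is exactly "closed under the kernels of epimorphisms", which I read as: the kernel of any epimorphism $S'\twoheadrightarrow T$ with $S', T$ in a suitable position... Actually the cleanest reading that makes the induction work is that $\sS$ is closed under kernels of epimorphisms \emph{between objects of $\sS$} only — but $S'\twoheadrightarrow S$ is such an epimorphism, so $K\in\sS$. Then the short exact sequence $0\rarrow K\rarrow S'\rarrow S\rarrow0$ gives a long exact sequence of $\Ext^\bullet_\sK(-,X)$, and the fragment $\Ext^n_\sK(K,X)\rarrow\Ext^{n+1}_\sK(S,X)\rarrow\Ext^{n+1}_\sK(S',X)$ shows $\Ext^{n+1}_\sK(S,X)$ is squeezed. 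The issue is that $\Ext^{n+1}_\sK(S',X)$ is not yet known to vanish — that is the very next case of what I am trying to prove.

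So the induction must be organized differently: I should prove, by induction on $n\ge1$, the statement $P(n)$: "$\Ext^n_\sK(S,X)=0$ for all $S\in\sS$" simultaneously for all $S$. Given $P(n)$, the long exact sequence fragment $\Ext^n_\sK(K,X)\rarrow\Ext^{n+1}_\sK(S,X)\rarrow\Ext^{n+1}_\sK(S',X)$ has its left term zero by $P(n)$ (since $K\in\sS$), so $\Ext^{n+1}_\sK(S,X)\hookrightarrow\Ext^{n+1}_\sK(S',X)$. This alone does not kill $\Ext^{n+1}_\sK(S,X)$, so I need one more idea: iterate. Replacing $S$ by $S'$ and continuing, one builds a chain of monomorphisms; but without a finiteness or termination hypothesis this need not stabilize. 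The resolution is to instead use the self-generating property to build, for each $S\in\sS$, an \emph{entire resolution} $\dotsb\rarrow S_2\rarrow S_1\rarrow S_0\rarrow S\rarrow0$ with all $S_i\in\sS$ (at each stage take an epimorphism from an object of $\sS$ onto the current kernel, which lies in $\sS$ by the closure hypothesis, then iterate), and then compute $\Ext^\bullet_\sK(S,X)$ via this resolution — but that requires $\sK$ to have enough injectives or that $\Ext$ be balanced, which is not assumed. Thus the genuinely correct route, and the one I expect the authors take, is: from $0\rarrow K\rarrow S'\rarrow S\rarrow 0$ with $K, S'\in\sS$, dimension-shift — $\Ext^{n+1}_\sK(S,X)\cong\Ext^n_\sK(K,X)$ provided $\Ext^n_\sK(S',X)=\Ext^{n+1}_\sK(S',X)=0$, which reduces $P(n+1)$ for $S$ to $P(n)$ for $K$. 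Running the induction downward in the cohomological degree while the ``test object'' travels along the resolution, one gets $\Ext^{n+1}_\sK(S,X)\cong\Ext^1_\sK(S_n,X)=0$ by the base case, where $S_n\in\sS$ is the $n$-th syzygy.

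\textbf{Main obstacle.} The delicate point is making the dimension-shift rigorous without enough projectives or injectives: one must check that $\Ext^n_\sK(K,X)\cong\Ext^{n+1}_\sK(S,X)$ genuinely holds in an arbitrary abelian category, which relies only on the long exact sequence associated to a short exact sequence (valid via Yoneda $\Ext$) together with the \emph{already established} vanishing $\Ext^n_\sK(S',X)=\Ext^{n+1}_\sK(S',X)=0$. Arranging the induction so that these two vanishings are available exactly when needed — i.e. threading the induction on $n$ together with the traversal of the syzygy chain, each syzygy lying in $\sS$ by the kernel-closure hypothesis and each epimorphism supplied by self-generation — is the crux. The closure of $\sS$ under kernels of epimorphisms is what keeps every syzygy inside $\sS$, and self-generation is what lets the syzygy chain be continued indefinitely; together they drive the whole degree down to the base case $n=1$ furnished by $X\in\sS^{\perp_1}$.
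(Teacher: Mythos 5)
There is a genuine gap, and you have in fact put your finger on it yourself without closing it. Your induction reduces $\Ext^{n+1}_\sK(S,X)=0$ to the vanishing of $\Ext^n_\sK(K,X)$ \emph{and} of $\Ext^{n+1}_\sK(S',X)$ (and $\Ext^n_\sK(S',X)$) for the middle term $S'$ of the chosen short exact sequence $0\rarrow K\rarrow S'\rarrow S\rarrow 0$. The vanishing of $\Ext^{n+1}_\sK(S',X)$ for $S'\in\sS$ is precisely the statement $P(n+1)$ you are trying to prove, so the argument is circular; and your fallback of travelling along a syzygy chain $\dotsb\rarrow S_2\rarrow S_1\rarrow S_0\rarrow S\rarrow 0$ inherits the same defect, since the objects $S_i\in\sS$ are not projective and each dimension shift again demands an unproven instance of $P(n+1)$. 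With a resolution fixed in advance, the long exact sequence only gives a chain of monomorphisms $\Ext^{n+1}_\sK(S,X)\hookrightarrow\Ext^{n+1}_\sK(S',X)\hookrightarrow\dotsb$, which never terminates.

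The missing idea, which is how the paper argues, is to let the short exact sequence depend on the individual Yoneda extension class rather than on $S$ alone. Given $\xi\in\Ext^n_\sK(S,X)$ with $n\ge 2$, represent it by an exact sequence $0\rarrow X\rarrow E_n\rarrow\dotsb\rarrow E_1\rarrow S\rarrow 0$ and apply the self-generating property to the epimorphism $E_1\rarrow S$ (not merely to $\id_S$): this yields an epimorphism $S'\rarrow S$ with $S'\in\sS$ factoring through $E_1$. Setting $S''=\ker(S'\rarrow S)\in\sS$, the induced map $S''\rarrow\ker(E_1\rarrow S)$ shows that $\xi$ is the image, under the connecting map of $0\rarrow S''\rarrow S'\rarrow S\rarrow 0$, of a class $\eta\in\Ext^{n-1}_\sK(S'',X)$. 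No vanishing of any $\Ext$ group on $S'$ is needed: one only uses that every class $\xi$ lies in the image of \emph{some} connecting map whose source is $\Ext^{n-1}_\sK(S'',X)$ with $S''\in\sS$, and that source vanishes by the induction hypothesis. This closes the induction on~$n$ starting from the base case $X\in\sS^{\perp_1}$.
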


\begin{proof}
 This is a partial generalization of the standard characterization of
hereditary cotorsion pairs in abelian categories~\cite[Lemma~1.4]{PS4}
mentioned above.
 The argument from~\cite[Lemma~6.17]{Sto-ICRA}
or~\cite[Lemma~4.25]{SaoSt} applies.
 To give some details, it follows from the assumptions of the lemma
that for any objects $S\in\sS$ and $K\in\sK$, and any Yoneda extension
class $\xi\in\Ext_\sK^n(S,K)$ with $n\ge2$ there exists a short exact
sequence $0\rarrow S''\rarrow S'\rarrow S\rarrow0$ in $\sK$ with
$S'$, $S''\in\sS$ such that the class~$\xi$ comes from a certain
Yoneda extension class $\eta\in\Ext_\sK^{n-1}(S'',K)$.
 Thus $\Ext_\sK^{n-1}(S'',K)=0$ for all $S''\in\sS$ implies
$\Ext_\sK^n(S,K)=0$.
\end{proof}

\begin{lem} \label{hereditary-generated-lemma}
 Let\/ $\sK$ be an abelian category with enough injective objects,
and let\/ $\sT\subset\sK$ be a class of objects.
 Put\/ $\sB=\sT^{\perp_{\ge1}}$.
 Then \par
\textup{(a)} ${}^{\perp_1}\sB={}^{\perp_{\ge1}}\sB\subset\sK$; \par
\textup{(b)} if the class\/ $\sA={}^{\perp_1}\sB={}^{\perp_{\ge1}}\sB$
is generating in\/ $\sK$, then $(\sA,\sB)$ is a hereditary
cotorsion pair in\/~$\sK$.
\end{lem}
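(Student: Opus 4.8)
The plan is to deduce part~(a) from Lemma~\ref{generated-by-self-generating} by verifying its hypotheses for a suitable class, and then to assemble part~(b) from part~(a) together with the standard characterization of hereditary cotorsion pairs. For part~(a), the obvious first move is to note the trivial inclusion ${}^{\perp_{\ge1}}\sB\subseteq{}^{\perp_1}\sB$, so only the reverse inclusion needs argument. Here I would not apply Lemma~\ref{generated-by-self-generating} directly to $\sB$, but rather work on the other side: set $\sA_1={}^{\perp_1}\sB$ and aim to show $\Ext^n_\sK(A,B)=0$ for all $A\in\sA_1$, $B\in\sB$, and $n\ge1$ by induction on~$n$, using dimension shifting via injective resolutions of~$B$. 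Since $\sK$ has enough injectives, for $B\in\sB$ pick $0\rarrow B\rarrow I\rarrow B'\rarrow0$ with $I$ injective; the point is that $B'\in\sB$ as well, because $\sB=\sT^{\perp_{\ge1}}$ is by its very definition closed under cokernels of monomorphisms (an $n$-extension of $T\in\sT$ by $B'$ shifts to an $(n+1)$-extension by $B$, which vanishes, and $\Ext^1_\sK(T,B')$ receives $\Ext^1_\sK(T,I)=0$ and maps to $\Ext^2_\sK(T,B)=0$). Then for $A\in\sA_1$ the long exact sequence gives $\Ext^n_\sK(A,B')\cong\Ext^{n+1}_\sK(A,B)$ for $n\ge1$, and $\Ext^1_\sK(A,B')=0$ because $B'\in\sB$; so induction on~$n$ yields $\Ext^{n}_\sK(A,B)=0$ for all $n\ge1$, i.e.\ $A\in{}^{\perp_{\ge1}}\sB$. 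This proves ${}^{\perp_1}\sB\subseteq{}^{\perp_{\ge1}}\sB$, hence equality.

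For part~(b), observe first that $(\sA,\sB)$ with $\sA={}^{\perp_1}\sB$ is automatically a cotorsion pair as soon as $\sB=\sA^{\perp_1}$; but this last identity is not part of the hypotheses and must be checked. We have $\sB=\sT^{\perp_{\ge1}}\subseteq\sT^{\perp_1}$, and one needs $\sB=\sA^{\perp_1}$ where $\sA={}^{\perp_1}\sB$. The inclusion $\sB\subseteq\sA^{\perp_1}$ is formal. For the reverse, I would note that $\sT\subseteq\sA$ would give $\sA^{\perp_1}\subseteq\sT^{\perp_1}$, but we need $\sT^{\perp_{\ge1}}$; the cleanest route is: $\sA^{\perp_1}={}^{\perp_1}\sB{}^{\perp_1}=({}^{\perp_{\ge1}}\sB)^{\perp_1}$ by part~(a), and since $\sB\subseteq{}^{\perp_{\ge1}}\sB{}^{\perp_1}$ always while the generating hypothesis on $\sA$ together with completeness-type arguments forces the reverse — more precisely, I expect the identity $\sA^{\perp_1}=\sB$ to follow from the fact that $\sB$ is closed under cokernels of monomorphisms and contains all injectives, using the standard argument that $(\^{\perp_1}(\sT^{\perp_{\ge1}}),\ \sT^{\perp_{\ge1}})$ is a cotorsion pair. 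Once $(\sA,\sB)$ is known to be a cotorsion pair with $\sA$ generating and $\sB$ cogenerating (the latter because it contains all injectives), heredity follows from the characterization recalled before Lemma~\ref{eklof-lemma}: $\sB$ is closed under cokernels of monomorphisms by construction, which is exactly one of the equivalent conditions for a cotorsion pair to be hereditary; equivalently $\Ext^2_\sK(A,B)=0$ for all $A\in\sA$, $B\in\sB$, which is immediate from part~(a).

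The main obstacle I anticipate is the verification that $\sB=\sA^{\perp_1}$, i.e.\ that $(\sA,\sB)$ really is a cotorsion pair and not merely a pair with $\sA={}^{\perp_1}\sB$; the other side of the orthogonality is where the interaction between $\sT^{\perp_1}$ and $\sT^{\perp_{\ge1}}$ genuinely matters, and where the generating hypothesis on $\sA$ gets used. Everything else — the dimension-shifting in part~(a), the reduction of heredity to closure of $\sB$ under cokernels of monomorphisms — is routine once that identity is in hand.
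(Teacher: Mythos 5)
Your part~(a) is correct and is essentially the paper's argument made explicit: the paper simply observes that $\sB=\sT^{\perp_{\ge1}}$ is coresolving (contains the injectives and is closed under cokernels of monomorphisms), and your dimension-shifting along $0\to B\to I\to B'\to 0$ is exactly the content of that observation.

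In part~(b), however, there is a genuine gap at precisely the point you flag, namely the inclusion $\sA^{\perp_1}\subseteq\sB$. Your proposed remedies do not work: ``completeness-type arguments'' are unavailable here, since $\sT$ is an arbitrary class in an arbitrary abelian category with enough injectives (the Eklof--Trlifaj machinery requires a \emph{set} of objects and a Grothendieck or locally presentable ambient category, and is only invoked later, in Proposition~\ref{hereditary-generated-prop}); and the chain $\sA^{\perp_1}=({}^{\perp_{\ge1}}\sB)^{\perp_1}$ by itself leads nowhere. The missing idea is to run the argument of part~(a) \emph{on the left-hand side}, i.e.\ to apply Lemma~\ref{generated-by-self-generating} to the class $\sA$ itself. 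Indeed, $\sA={}^{\perp_{\ge1}}\sB$ is closed under kernels of epimorphisms (by the long exact sequence, dually to your verification that $\sB$ is closed under cokernels of monomorphisms), and the hypothesis that $\sA$ is generating makes it self-generating; Lemma~\ref{generated-by-self-generating} then yields $\sA^{\perp_1}=\sA^{\perp_{\ge1}}$. Now the inclusion $\sT\subseteq\sA$, which you already noted, gives $\sA^{\perp_1}=\sA^{\perp_{\ge1}}\subseteq\sT^{\perp_{\ge1}}=\sB$, and the reverse inclusion $\sB\subseteq\sA^{\perp_1}$ is formal. This is exactly where the generating hypothesis enters, and it closes the identity $\sB=\sA^{\perp_1}$ without any appeal to completeness; heredity then follows as you say, since $\Ext^n_\sK(A,B)=0$ for all $n\ge1$ by construction.
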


\begin{proof}
 Part~(a) follows from the observation that the class\/ $\sB$ is
\emph{coresolving} in~$\sK$.
 In other words, the class $\sB\subset\sK$ contains the injective
objects and is closed under extensions and the cokernels
of monomorphisms.
 To prove part~(b), one observes that the class $\sA$ is
\emph{resolving} in $\sK$, i.~e., it is generating and
closed under extensions and the kernels of epimorphisms.
 Therefore, $\sA^{\perp_{\ge1}}=\sA^{\perp_1}$ by
Lemma~\ref{generated-by-self-generating}.
 It follows immediately from the constructions that
$\sB=\sA^{\perp_{\ge1}}$, so we are done.
\end{proof}

\begin{prop} \label{hereditary-generated-prop}
 Let\/ $\sK$ be a Grothendieck category and\/ $\sT\subset\sK$ be a set
of objects.
 Put\/ $\sB=\sT^{\perp_{\ge1}}$ and\/
$\sA={}^{\perp_1}\sB={}^{\perp_{\ge1}}\sB$, as per
Lemma~\ref{hereditary-generated-lemma}, and assume that
the class\/ $\sA$ is generating in\/~$\sK$.
 Then $(\sA,\sB)$ is a hereditary complete cotorsion pair in\/ $\sK$
generated by a certain set of objects\/~$\sS$.
\end{prop}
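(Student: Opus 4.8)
The plan is to deduce the proposition from the Eklof--Trlifaj theorem (Theorem~\ref{eklof-trlifaj-theorem}) by exhibiting a \emph{set} of objects $\sS\subset\sK$ with $\sS^{\perp_1}=\sB$. Once such an $\sS$ is found, $(\sA,\sB)$ is the cotorsion pair generated by $\sS$: its right-hand class is $\sS^{\perp_1}=\sB$ by construction, while its left-hand class ${}^{\perp_1}\sB$ equals $\sA$ by Lemma~\ref{hereditary-generated-lemma}. This pair is hereditary by Lemma~\ref{hereditary-generated-lemma}(b), and, since $\sA$ is assumed to be generating, it is complete by Theorem~\ref{eklof-trlifaj-theorem}(a). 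So the whole task reduces to replacing the defining condition of $\sB$ (vanishing of $\Ext^n_\sK(T,{-})$ for all $T\in\sT$ and all $n\ge1$) by $\perp_1$-orthogonality against a set of objects.

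To this end I would use the hypothesis that $\sA$ is generating to build, for each $T\in\sT$, a left resolution $\dotsb\rarrow A_1\rarrow A_0\rarrow T\rarrow0$ with all $A_i\in\sA$: choose an epimorphism $A_0\twoheadrightarrow T$ with $A_0\in\sA$ and set $K_1^T=\ker(A_0\rarrow T)$; then choose an epimorphism $A_1\twoheadrightarrow K_1^T$ with $A_1\in\sA$ and set $K_2^T=\ker(A_1\rarrow K_1^T)$; and so on. With the convention $K_0^T=T$, this produces short exact sequences $0\rarrow K_{i+1}^T\rarrow A_i\rarrow K_i^T\rarrow0$ for every $i\ge0$.

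The next step is a dimension-shifting computation. Since $(\sA,\sB)$ is hereditary, $\Ext^n_\sK(A_i,B)=0$ for every $B\in\sB$ and every $n\ge1$, so the long exact sequence of $\Ext_\sK({-},B)$ associated with each short exact sequence above gives isomorphisms $\Ext^1_\sK(K_i^T,B)\cong\Ext^{i+1}_\sK(T,B)$ for all $i\ge1$ (a short induction on~$i$). Hence an object $B$ lies in $\sB=\sT^{\perp_{\ge1}}$ if and only if $\Ext^1_\sK(K_i^T,B)=0$ for all $T\in\sT$ and all $i\ge0$ (with $i=0$ recording the condition $\Ext^1_\sK(T,B)=0$). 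Setting
\[
 \sS\;=\;\bigcup_{T\in\sT}\{K_i^T\mid i\ge0\},
\]
which is a set (a $\sT$-indexed union of countable families of objects), we obtain $\sS^{\perp_1}=\sT^{\perp_{\ge1}}=\sB$, and the proposition follows as in the first paragraph.

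The argument is short, and I do not anticipate a real obstacle: its only content beyond the cited results is the syzygy reduction carried out in the second and third paragraphs. The two points that need a little care are that the collection of chosen syzygies genuinely forms a set (this is where the hypothesis that $\sT$ is a set, not a proper class, is used) and that the dimension-shifting isomorphisms are set up with the correct variance, so that ``$\Ext^1_\sK$ against every $K_i^T$ vanishes'' is indeed equivalent to ``$\Ext^n_\sK$ against $T$ vanishes for all $n\ge1$''.
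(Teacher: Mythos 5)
Your overall strategy coincides with the paper's: by Lemma~\ref{hereditary-generated-lemma} and Theorem~\ref{eklof-trlifaj-theorem}, everything reduces to producing a \emph{set} $\sS$ with $\sS^{\perp_1}=\sT^{\perp_{\ge1}}$. The gap is in your construction of~$\sS$. For the inclusion $\sS^{\perp_1}\subset\sB$ you need the implication ``$\Ext^1_\sK(K_i^T,X)=0$ for all~$i$ implies $\Ext^n_\sK(T,X)=0$ for all $n\ge1$'' for an \emph{arbitrary} object $X\in\sS^{\perp_1}$; but your dimension-shifting isomorphism $\Ext^1_\sK(K_i^T,X)\simeq\Ext^{i+1}_\sK(T,X)$ is only available when $\Ext^j_\sK(A_i,X)=0$ for $j\ge1$, which you verified only for $X\in\sB$ --- that is, exactly in the direction you do not need. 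For a general $X$ with $\Ext^1_\sK(K_1^T,X)=0$, the long exact sequence of $\Ext_\sK({-},X)$ associated with $0\rarrow K_1^T\rarrow A_0\rarrow T\rarrow0$ yields only an injection $\Ext^2_\sK(T,X)\rarrow\Ext^2_\sK(A_0,X)$, and $\Ext^2_\sK(A_0,X)$ need not vanish: a Grothendieck category need not have enough projectives, so the objects $A_i\in\sA$ cannot be chosen so as to kill higher Ext against everything. (If $\sK$ had enough projective objects, your argument would work verbatim with the $A_i$ chosen projective; for module categories this is fine, but not in the stated generality.)

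The paper's proof circumvents exactly this point by not fixing a single syzygy per object. For each $S\in\sA$ and a suitable regular cardinal~$\lambda$, it forms the \emph{set} of all kernels $S'=\ker(A\rarrow K\rarrow S)$ taken over all epimorphisms $K\rarrow S$ from $\lambda$\+presentable objects $K$ (with $A\rarrow K$ a chosen epimorphism from $A\in\sA$), and then shows --- using that any short exact sequence $0\rarrow Z\rarrow Y\rarrow S\rarrow0$ is a pushout of one with $\lambda$\+presentable middle term, by~\cite[Lemma~3.4]{PR} --- that \emph{every} Yoneda class in $\Ext^n_\sK(S,X)$ is the image of a class in $\Ext^{n-1}_\sK(S',X)$ for one of these~$S'$. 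Iterating this construction over~$n$ produces the desired set~$\sS$. To repair your proof you would have to replace the single chosen resolution of each $T$ by such a set-indexed family of syzygies at every stage; the resulting union over all stages is still a set, since $\sT$ is a set and each stage contributes a set.
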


\begin{proof}
 In view of Lemma~\ref{hereditary-generated-lemma} and
Theorem~\ref{eklof-trlifaj-theorem}, we only need to construct a set
of objects $\sS\subset\sK$ such that $\sS^{\perp_1}=\sT^{\perp_{\ge1}}$.
 Clearly, we have $\sT\subset\sA$.
 Arguing by induction, it suffices to show that for every object
$S\in\sA$ and an integer $n\ge2$ there exists a set of objects
$\sS'\subset\sA$ such that for any given $X\in\sK$ one has
$\Ext_\sK^n(S,X)=0$ whenever $\Ext_\sK^{n-1}(S',X)=0$
for all $S'\in\sS'$.

 Let $\lambda$~be a regular cardinal such that the category $\sK$ is
locally $\lambda$\+presentable and the object $S$ is
$\lambda$\+presentable.
 For every $\lambda$\+presentable object $K\in\sK$ endowed with
an epimorphism $K\rarrow S$, choose an epimorphism $A\rarrow K$ onto
$K$ from an object $A\in\sA$, and set $S'$ to be the kernel of
the composition $A\rarrow K\rarrow S$.
 Then one has $S'\in\sA$, since the class $\sA$ is closed under
the kernels of epimorphisms.

 Let $\sS'$ be the set of all objects $S'$ obtained in this way.
 For any Yoneda extension class $\xi\in\Ext_\sK^n(S,X)$, there exists
a short exact sequence $0\rarrow Z\rarrow Y\rarrow S\rarrow0$ in $\sK$
such that the class~$\xi$ comes from a Yoneda extension class
$\eta\in\Ext_\sK^{n-1}(Z,X)$.
 By~\cite[Lemma~3.4]{PR}, any short exact sequence $0\rarrow Z\rarrow
Y\rarrow S\rarrow0$ in $\sK$ is a pushout of a short exact sequence
$0\rarrow Z'\rarrow K\rarrow S\rarrow0$ in $\sK$ in which the object $K$
is $\lambda$\+presentable.
 The latter short exact sequence is, in turn, a pushout of a short
exact sequence $0\rarrow S'\rarrow A\rarrow S\rarrow0$ with $A\in\sA$
and $S'\in\sS'$.
 It follows easily that $\Ext_\sK^{n-1}(S',X)$ for all $S'\in\sS'$
implies $\Ext_\sK^n(S,X)=0$.
\end{proof}

 For any additive/abelian category $\sK$, let us denote by $\sC(\sK)$
the additive/abelian category of complexes in~$\sK$ and by $\Hot(\sK)$
the triangulated homotopy category of (complexes in)~$\sK$.
 When a category $\sK$ is locally $\lambda$\+presentable or
Grothendieck, so is the category~$\sC(\sK)$.
 As usual, we denote by $C^\bu\longmapsto C^\bu[n]$ the cohomological
grading shifts of a complex $C^\bu$; so $C^\bu[n]^i=C^{i+n}$ for all
$n$, $i\in\boZ$.
 The following lemma is well-known and very useful.

\begin{lem} \label{complexes-homotopy-hom-ext-lemma}
 For any two complexes $A^\bu$ and $B^\bu$ in an abelian
category\/~$\sK$, the group\/ $\Hom_{\Hot(\sK)}(A^\bu,B^\bu[1])$ is
naturally isomorphic to the subgroup in\/
$\Ext^1_{\sC(\sK)}(A^\bu,B^\bu)$ formed by all the degree-wise split
extension classes.
 In particular, if\/ $\Ext_\sK^1(A^i,B^i)=0$ for all $i\in\boZ$, then
$$
 \Hom_{\Hot(\sK)}(A^\bu,B^\bu[1])\simeq\Ext^1_{\sC(\sK)}(A^\bu,B^\bu).
$$
\end{lem}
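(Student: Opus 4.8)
The plan is to exhibit the isomorphism by hand in both directions, using the standard description of an extension of complexes as a ``twisted direct sum''. Throughout, I would write a chain map $A^\bu\rarrow B^\bu[1]$ as a family $f=(f^i\:A^i\rarrow B^{i+1})_{i\in\boZ}$; since the differential of $B^\bu[1]$ is $-d_B$, the chain-map condition reads $f^{i+1}d_A^i+d_B^{i+1}f^i=0$, and two chain maps are homotopic exactly when their difference has the form $(h^{i+1}d_A^i-d_B^ih^i)_i$ for some family $h=(h^i\:A^i\rarrow B^i)$. Thus $\Hom_{\Hot(\sK)}(A^\bu,B^\bu[1])$ is the quotient of the group of such cocycle families by the subgroup of coboundary families, with no need to assume that an internal Hom-complex exists in~$\sK$.

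First I would build the map to the right-hand side. Given a cocycle family $f$, define a complex $E_f^\bu$ with $E_f^i=A^i\oplus B^i$ and differential $\bigl(\begin{smallmatrix}d_A^i&0\\ f^i&d_B^i\end{smallmatrix}\bigr)$; the cocycle identity for $f$ is precisely the statement that this differential squares to zero. The canonical inclusion $B^\bu\hookrightarrow E_f^\bu$ and projection $E_f^\bu\twoheadrightarrow A^\bu$ form a short exact sequence in $\sC(\sK)$ that is split in every degree, so it represents a class in the subgroup of degree-wise split classes of $\Ext^1_{\sC(\sK)}(A^\bu,B^\bu)$. Replacing $f$ by $f+(h^{i+1}d_A^i-d_B^ih^i)_i$ produces an isomorphic extension via the degree-wise automorphism $\bigl(\begin{smallmatrix}\id&0\\ h^i&\id\end{smallmatrix}\bigr)$ of $A^i\oplus B^i$, so $f\mapsto[E_f^\bu]$ descends to a homomorphism on homotopy classes. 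That this lands in, and is additive with respect to Baer sum into, the degree-wise split subgroup is routine, using that taking the degree-$i$ component is an exact functor $\sC(\sK)\rarrow\sK$.

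Next, for the inverse, I would start from an arbitrary degree-wise split short exact sequence $0\rarrow B^\bu\rarrow E^\bu\rarrow A^\bu\rarrow0$, choose in each degree a splitting of $0\rarrow B^i\rarrow E^i\rarrow A^i\rarrow0$ in $\sK$, and thereby identify $E^i\cong A^i\oplus B^i$ as objects of $\sK$ compatibly with the structure maps but not necessarily with the differentials. In these coordinates $d_E$ is forced to be lower triangular with diagonal entries $d_A^i$, $d_B^i$, and $d_E^2=0$ forces the off-diagonal entry $f=(f^i)$ to be a cocycle family; changing the chosen splittings changes $f$ by a coboundary, and replacing $E^\bu$ by an isomorphic extension changes $f$ by a coboundary as well, so $[E^\bu]\mapsto[f]$ is well defined. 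The formulas make it transparent that the two assignments are mutually inverse, and naturality in $A^\bu$ and $B^\bu$ is immediate from the matrix description (a morphism of complexes into $A^\bu$ or out of $B^\bu$ acts on cocycle families by pre- or post-composition and on twisted complexes in the evident way).

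Finally, the displayed special case is immediate: if $\Ext^1_\sK(A^i,B^i)=0$ for all $i$, then every short exact sequence $0\rarrow B^\bu\rarrow E^\bu\rarrow A^\bu\rarrow0$ in $\sC(\sK)$ is split in each degree, since its degree-$i$ component is a short exact sequence in $\sK$ with vanishing $\Ext^1$; hence the degree-wise split classes exhaust $\Ext^1_{\sC(\sK)}(A^\bu,B^\bu)$, and the first part gives the asserted isomorphism. I do not anticipate a genuine obstacle here: the only thing requiring care is the sign bookkeeping in translating between chain homotopies and coboundaries, and checking that each auxiliary choice (of homotopy, of degree-wise splitting) alters the output only up to the expected equivalence.
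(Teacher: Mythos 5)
Your argument is correct and is essentially the paper's own proof: the paper observes that degree-wise split extensions are exactly the complexes $\cone(f)[-1]$ for morphisms $f\:A^\bu\rarrow B^\bu[1]$, with homotopic morphisms giving equal extension classes, and your explicit matrix construction of $E_f^\bu$ with differential $\bigl(\begin{smallmatrix}d_A&0\\ f&d_B\end{smallmatrix}\bigr)$ is precisely that cone, spelled out. The sign bookkeeping and the deduction of the displayed special case are handled correctly.
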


\begin{proof}
 The point is that degree-wise split extensions $0\rarrow B^\bu
\rarrow C^\bu\rarrow A^\bu\rarrow0$ in $\sC(\sK)$ are described as
the cones of morphisms of complexes $f\:A^\bu\rarrow B^\bu[1]$;
specifically, $C=\cone(f)[-1]$.
 Such extensions corresponding to two morphisms $f'$, $f''\:A^\bu
\rarrow B^\bu$ represent the same extension class in
$\Ext^1_{\sC(\sK)}(A^\bu,B^\bu)$ if and only if the two morphisms
$f'$ and~$f''$ are cochain homotopic.
 (Cf.~\cite[Lemma~5.1]{PS4}.)
\end{proof}

\Section{Locally Finitely Presentable Abelian Categories}
\label{loc-fin-pres-secn}

 The definitions of a $\lambda$\+presentable object and
a locally $\lambda$\+presentable (abelian) category were already
given in the beginning of Section~\ref{cotorsion-pairs-secn}.
 The concepts of a \emph{finitely presentable object} and
a \emph{locally finitely presentable category} are obtained by 
specializing to the case of the countable cardinal $\lambda=\aleph_0$.
 Given a locally finitely presentable abelian category $\sK$, we denote
by $\sK_\fp\subset\sK$ the full subcategory of finitely presentable
objects.
 The category $\sK_\fp$ is essentially small.
 The full subcategory $\sK_\fp$ is closed under extensions and cokernels
in~$\sK$.

 More generally, an object $S\in\sK$ is said to be
\emph{finitely generated} if the functor $\Hom_\sK(S,{-})$
preserves the directed colimits of diagrams of monomorphisms in~$\sK$.
 In a locally finitely presentable abelian category, the finitely
generated objects are precisely the quotient objects of the finitely
presentable ones.
 Given a short exact sequence $0\rarrow Q\rarrow S\rarrow T\rarrow0$
in a locally finitely presentable abelian category $\sK$ with
a finitely presentable object $S$, the object $T$ is finitely
presentable if and only if the object $Q$ is finitely generated.

 \emph{Finitely accessible additive categories} (in the terminology
of~\cite{AR}) form a wider class of categories than the locally
finitely presentable abelian ones.
 This class of additive categories, which is natural for many purposes,
was studied in the papers~\cite{CB,Kra} under the name of ``locally
finitely presented additive categories''.

 A locally finitely presentable abelian category $\sK$ is said to be
\emph{locally coherent} if the class of all finitely presentable objects
in $\sK$ is closed under the kernels of epimorphisms, or equivalently,
if it is closed under the kernels of all morphisms.
 A locally finitely presentable abelian category $\sK$ is locally
coherent if and only if any finitely generated subobject of
a finitely presentable object of $\sK$ is finitely presentable, or
equivalently, the kernel of any morphism from a finitely generated
object to a finitely presentable one is
finitely generated~\cite[Section~2]{Ro}.
 We refer to~\cite[Section~13]{PS3} or~\cite[Sections~8.1\+-8.2]{PS5}
for a further discussion of locally finitely presentable
and locally coherent abelian categories.

 For example, for any associative ring $R$, the category of $R$\+modules
$\Modr R$ is locally finitely presentable.
 More generally, for any small preadditive category $\cR$ (i.~e.,
a small category enriched in abelian groups), one denotes by
$\Modr\cR=\Funct_\ad(\cR^\sop,\Ab)$ the category of contravariant
additive functors from $\cR$ to the category of abelian groups $\Ab$,
and by $\cR\Modl=\Funct_\ad(\cR,\Ab)$ the category of covariant
additive functors $\cR\rarrow\Ab$.
 Both $\Modr\cR$ and $\cR\Modl$ are locally finitely presentable
abelian categories, with the full subcategories of finitely
presentable objects consisting of all the cokernels of arbitrary
morphisms between finite direct sums of (co)representable functors.

 The direct summands of coproducts of (co)representable functors are
the projective objects in $\Modr\cR$ and $\cR\Modl$.
 Generally, $\cR$ can be viewed as a ``ring with many objects'' or
``a nonunital ring with enough idempotents''; then the objects of
$\Modr\cR$ and $\cR\Modl$ are simply interpreted as right and left
$\cR$\+modules.
 Essentially all the constructions and results of the conventional
module theory can be easily transferred to modules over rings with
many objects.
 In particular, there is a naturally defined tensor product functor
$\ot_\cR\:\Modr\cR\times\cR\Modl\rarrow\Ab$, and its derived functor
$\Tor^\cR_*$ can be constructed as usual.
 Hence one can speak of \emph{flat} right and left $\cR$\+modules;
these are precisely the directed colimits of projective ones.
 We denote the full subcategory of flat modules by
$\Modrfl\cR\subset\Modr\cR$.

 Let us recall some definitions sketched in the introduction.
 In a locally finitely presentable abelian category $\sK$, an object
$J$ is said to be \emph{fp\+injective} if $\Ext^1_\sK(T,J)=0$ for
all finitely presentable objects $T\in\sK$.
 An object $P\in\sK$ is said to be \emph{fp\+projective} if
$\Ext^1_\sK(P,J)=0$ for all fp\+injective objects $J\in\sK$.

 We denote the full subcategory of fp\+injective objects by
$\sK_\inj^\fp\subset\sK$ and the full subcategory of
fp\+projective objects by $\sK_\proj^\fp\subset\sK$.
 So $(\sK_\proj^\fp$, $\sK_\inj^\fp)$ is the cotorsion pair
generated by $\sS=\sK_\fp$ in~$\sK$.
 Applying Theorem~\ref{eklof-trlifaj-theorem}, one easily concludes
that this cotorsion pair is complete and $\sK_\proj^\fp=
\Fil(\sK_\fp)^\oplus$ (since any object of $\sK$ is a quotient
object of a coproduct of finitely presentables).

 An object $J\in\sK$ is said to be \emph{strongly fp\+injective} if
$\Ext^n_\sK(T,J)=0$ for all $T\in\sK_\fp$ and $n\ge1$.
 An object $P\in\sK$ is said to be \emph{weakly fp\+projective} if
$\Ext^1_\sK(P,J)=0$ for all strongly fp\+injective objects $J\in\sK$,
or equivalently, $\Ext^n_\sK(P,J)=0$ for all strongly fp\+injective
objects $J\in\sK$ and all $n\ge1$ (cf.\
Lemma~\ref{hereditary-generated-lemma}).

 We denote the full subcategory of strongly fp\+injective objects by
$\sK_\inj^\sfp\subset\sK$ and the full subcategory of weakly
fp\+projective objects by $\sK_\proj^\wfp\subset\sK$.
 By Proposition~\ref{hereditary-generated-prop} applied to
(a~representative set of isomorphism classes of objects in)
$\sT=\sK_\fp$, the pair of classes of objects 
$(\sK_\proj^\wfp$, $\sK_\inj^\sfp)$ is a hereditary complete
cotorsion pair in~$\sK$.

 A short exact sequence $0\rarrow K\rarrow L\rarrow M\rarrow0$ in
a locally finitely presentable abelian category $\sK$ is said to be
\emph{pure} if the induced morphism of abelian groups
$\Hom_\sK(T,L)\rarrow \Hom_\sK(T,M)$ is surjective for all
finitely presentable objects $T\in\sK$.
 In this case, the morphism $K\rarrow L$ is called a \emph{pure
monomorphism} and the morphism $L\rarrow M$ is called a \emph{pure
epimorphism} in~$\sK$.
 The object $K$ is said to be a \emph{pure subobject} of $L$,
and the object $M$ is said to be a \emph{pure quotient} of~$L$.
 Acyclic complexes obtained by splicing pure short exact sequences
are called \emph{pure acyclic} or \emph{pure exact}.
 The class of all pure short exact sequences defines an exact
category structure on $\sK$, called the \emph{pure exact structure}.

 An object $P\in\sK$ is said to be \emph{pure-projective} if it is
projective with respect to the pure exact structure, i.~e.,
the induced map $\Hom_\sK(P,L)\rarrow\Hom_\sK(P,M)$ is surjective
for any pure epimorphism $L\rarrow M$ in~$\sK$.
 There are enough pure-projective objects in~$\sK$: any object is
a pure quotient of a pure-projective one.
 An object $P\in\sK$ is pure-projective if and only if it is
a direct summand of a coproduct of finitely presentable objects.
 So any pure-projective object is fp\+projective, but the converse
usually does \emph{not} hold.

 We denote the class of all pure-projective objects by
$\sK_\proj^\pur\subset\sK$.
 The \emph{pure-injective objects} are defined dually, but we will not
use them in this paper.

 The next two lemmas are well-known.
 The following one explains why fp\+injective objects are often called
``absolutely pure''.

\begin{lem} \label{absolutely-pure}
 Let\/ $\sK$ be a locally finitely presentable abelian category.
 Then an object $J\in\sK$ is fp\+injective if and only if any
monomorphism $J\rarrow K$ from $J$ to any object $K\in\sK$ is pure.
\end{lem}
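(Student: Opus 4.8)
The plan is to prove both implications of the biconditional, the nontrivial direction being that fp\+injectivity of $J$ forces every monomorphism out of $J$ to be pure.

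First I would dispose of the easy direction. Suppose every monomorphism $J\rarrow K$ in $\sK$ is pure. Since $\sK$ is Grothendieck, it has enough injectives, so choose an injective object $E$ and a monomorphism $J\rarrow E$; by hypothesis this is a pure short exact sequence $0\rarrow J\rarrow E\rarrow E/J\rarrow0$. Applying $\Hom_\sK(T,{-})$ for a finitely presentable object $T$, purity gives surjectivity of $\Hom_\sK(T,E)\rarrow\Hom_\sK(T,E/J)$, while the long exact sequence of $\Ext$ together with $\Ext^1_\sK(T,E)=0$ shows that this surjectivity is exactly the vanishing of $\Ext^1_\sK(T,J)$. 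Hence $J$ is fp\+injective.

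For the converse, assume $J$ is fp\+injective and let $0\rarrow J\rarrow K\rarrow M\rarrow0$ be an arbitrary short exact sequence in $\sK$. To show it is pure I must show that for every finitely presentable object $T\in\sK$ the map $\Hom_\sK(T,K)\rarrow\Hom_\sK(T,M)$ is surjective. Passing to the long exact cohomology sequence of $\Ext_\sK(T,{-})$, the obstruction to lifting a morphism $T\rarrow M$ lies in $\Ext^1_\sK(T,J)$, which vanishes precisely because $T$ is finitely presentable and $J$ is fp\+injective by assumption. Therefore the lifting map is surjective, the sequence is pure, and in particular the monomorphism $J\rarrow K$ is a pure monomorphism.

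I do not expect any serious obstacle here: the statement is essentially a repackaging of the definitions of fp\+injective object and pure monomorphism through the $\Ext$/$\Hom$ long exact sequence, using only that finitely presentable objects detect purity (which is how purity is defined in this setting) and that $\sK$ has enough injectives (which holds because $\sK$ is Grothendieck). The one point deserving a line of care is the reduction in the forward direction to a single monomorphism into an injective object, for which one uses that $\Ext^1$ can be computed from any exact sequence $0\rarrow J\rarrow E\rarrow E/J\rarrow0$ with $E$ injective; this is standard dimension-shifting and requires nothing beyond the existence of enough injectives already noted in Section~\ref{cotorsion-pairs-secn}.
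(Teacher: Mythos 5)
Your proof is correct. The ``only if'' direction (fp\+injective implies every monomorphism out of $J$ is pure) is exactly the paper's argument: the obstruction to lifting a map from a finitely presentable $T$ along $K\rarrow M$ lies in $\Ext^1_\sK(T,J)=0$. Where you diverge is the ``if'' direction. The paper argues more economically: given that every monomorphism out of $J$ is pure, take an \emph{arbitrary} extension $0\rarrow J\rarrow K\rarrow T\rarrow 0$ with $T$ finitely presentable; purity lets the identity of $T$ lift, so the sequence splits, and hence $\Ext^1_\sK(T,J)=0$ directly, with no injective objects needed. You instead embed $J$ into an injective $E$ and dimension-shift along $0\rarrow J\rarrow E\rarrow E/J\rarrow 0$, using $\Ext^1_\sK(T,E)=0$ and the purity of $J\rarrow E$ to kill $\Ext^1_\sK(T,J)$. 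Both are valid; your route costs you the (harmless, since $\sK$ is Grothendieck) appeal to enough injectives, while the paper's version works verbatim in any exact-category setting where extensions by finitely presentable objects make sense. A small presentational point: your hypothesis is only that monomorphisms \emph{from} $J$ are pure, and both arguments indeed only use such monomorphisms, so there is no gap; but it is worth noticing that the paper's splitting argument makes this restriction visibly sufficient, since the only sequence it ever tests is one starting at $J$.
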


\begin{proof}
 ``If'': let $0\rarrow J\rarrow K\rarrow T\rarrow0$ be a short exact
sequence in $\sK$ with $T\in\sK_\fp$.
 By assumption, this short exact sequence is pure.
 Hence the map $\Hom_\sK(T,K)\rarrow\Hom_\sK(T,T)$ is surjective, so
our short exact sequence splits.
 We have shown that $\Ext^1_\sK(T,J)=0$, as desired.

 ``Only if'': let $0\rarrow J\rarrow K\rarrow M\rarrow0$ be a short
exact sequence in~$\sK$.
 Then the assumption of $\Ext^1_\sK(T,J)=0$ for any finitely
presentable object $T\in\sK$ implies the desired surjectivity of
the map $\Hom_\sK(T,K)\rarrow\Hom_\sK(T,M)$.
\end{proof}

\begin{lem} \label{pure-exact-structure-as-flat-modules}
 Let\/ $\sK$ be a locally finitely presentable abelian category.
 Denote by\/ $\cR$ a small category equivalent to\/~$\sK_\fp$.
 Then the functor assigning to an object $K\in\sK$ the contravariant
functor\/ $\Hom_\sK({-},K)\:\sK^\sop\rarrow\Ab$ restricted to
the full subcategory\/ $\sK_\fp\subset\sK$ defines an equivalence
between\/ $\sK$ and the full subcategory of \emph{flat} modules
in the category of right $\cR$\+modules\/~$\Modr\cR$,
$$
 \sK\simeq\Modrfl\cR.
$$
 Under this equivalence, the pure exact structure on\/ $\sK$
corresponds to the exact structure on\/ $\Modrfl\cR$ inherited from
the abelian exact structure on\/ $\Modr\cR$.
 The pure-projective objects of\/ $\sK$ correspond to the projective
objects of\/~$\Modr\cR$.
\end{lem}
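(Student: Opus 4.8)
The statement is the abelian-category incarnation of Crawley--Boevey's description of finitely accessible categories, and I would prove it in four steps. Write $\Phi\:\sK\rarrow\Modr\cR$ for the restricted Yoneda functor $\Phi(K)=\Hom_\sK({-},K)|_{\sK_\fp}$, which is well defined because $\Hom_\sK({-},K)\:\sK^\sop\rarrow\Ab$ is additive and $\cR\simeq\sK_\fp$. The plan is to show: (i) $\Phi$ is fully faithful; (ii) the essential image of $\Phi$ is exactly $\Modrfl\cR$; (iii) $\Phi$ carries pure short exact sequences in $\sK$ onto the short exact sequences of $\Modr\cR$ whose terms are flat, i.e.\ onto the conflations of the exact structure inherited by $\Modrfl\cR$; (iv) $\Phi$ matches the pure-projective objects of $\sK$ with the projective objects of $\Modr\cR$.

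Steps (i) and (ii) rest on three standard features of a locally finitely presentable abelian category: every $K\in\sK$ is a directed colimit $K=\varinjlim_i S_i$ of finitely presentable objects; $\Hom_\sK(T,{-})$ preserves directed colimits for $T\in\sK_\fp$; and the Yoneda functor $\sK_\fp\rarrow\Modr\cR$ is fully faithful. As colimits in $\Modr\cR$ are computed objectwise, the first two facts give $\Phi(K)=\varinjlim_i\Phi(S_i)$, and then
$$
 \Hom_\sK(\varinjlim_i S_i,\,K')\;=\;\varprojlim_i\Hom_\sK(S_i,K')
 \;=\;\varprojlim_i\Hom_{\Modr\cR}(\Phi S_i,\Phi K')
 \;=\;\Hom_{\Modr\cR}(\Phi K,\Phi K'),
$$
the middle equality being the Yoneda lemma for the representable module $\Phi S_i$; this proves (i). For (ii): each $\Phi(K)$ is a directed colimit of representable, hence finitely generated projective, $\cR$\+modules, so it is flat; conversely, by the Lazard--Govorov characterization of flat modules (recalled above: flat $=$ directed colimit of projectives, which one refines to representables) every flat $M\in\Modr\cR$ is a directed colimit of representable modules, and lifting that diagram along the fully faithful Yoneda embedding to a directed diagram $(T_j)$ in $\sK_\fp$ and putting $K=\varinjlim_j T_j$ yields $\Phi(K)=\varinjlim_j\Phi(T_j)\cong M$. (Equivalently, exhibit the colimit-preserving left adjoint $\Psi\:\Modr\cR\rarrow\sK$ of $\Phi$, namely the extension of $\sK_\fp\hookrightarrow\sK$, and check $\Psi\Phi\cong\Id_\sK$ and $\Phi\Psi|_{\Modrfl\cR}\cong\Id$ by reduction to representables.) Thus $\Phi$ restricts to an equivalence $\sK\simeq\Modrfl\cR$.

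For (iii), one direction is immediate: in a short exact sequence $0\rarrow K\rarrow L\rarrow M\rarrow0$ of $\sK$ the left exactness of each $\Hom_\sK(T,{-})$ makes $0\rarrow\Phi K\rarrow\Phi L\rarrow\Phi M$ exact for free, and purity says precisely that $\Phi L(T)\rarrow\Phi M(T)$ is onto for all $T\in\sK_\fp$, i.e.\ that the $\Phi$\+image is a (flat-termed) short exact sequence in $\Modr\cR$. For the converse, transport a short exact sequence of flat $\cR$\+modules through the equivalence to a three-term complex $0\rarrow K\rarrow L\rarrow M\rarrow0$ in $\sK$ and verify it is a pure short exact sequence: exactness in $\sK$ together with purity of $L\rarrow M$ is read off from the exactness of the $\cR$\+module sequence at each $T\in\sK_\fp$, using that $\sK_\fp$ is a generating class, so that the kernel of $K\rarrow L$, the homology at $L$, and the cokernel of $L\rarrow M$ admit no nonzero morphism from a finitely presentable object and hence vanish. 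Step (iv) is then a short formal consequence: the pure-projective objects of $\sK$ are precisely the direct summands of coproducts of finitely presentable objects, $\Phi$ preserves coproducts (a coproduct is the directed colimit of its finite subcoproducts, with which $\Hom_\sK(T,{-})$ commutes for $T\in\sK_\fp$) and sends finitely presentable objects to representable — hence finitely generated projective — $\cR$\+modules, so it sends pure-projectives exactly to the direct summands of coproducts of representables, namely the projective $\cR$\+modules, each of which is flat and thus lies in the essential image.

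The step I expect to cost the most care is the converse half of (iii): reflecting genuine exactness in $\sK$ from exactness tested only against finitely presentable objects forces one to track the gap between left exactness of $\Hom$ and actual exactness, and to use the generating property of $\sK_\fp$ in an essential way (possibly together with a pure-projective presentation of $M$). Step (ii)'s appeal to Lazard's theorem, and the attendant lifting of the diagram of representables to $\sK_\fp$, is the other place where the locally finitely presentable hypothesis genuinely does the work; everything else is formal.
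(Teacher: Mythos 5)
Your proof is correct and takes essentially the same route as the paper, which merely records the lemma as ``a standard observation'': the restricted Yoneda functor identifies $\sK_\fp$ with the representable ($=$ finitely generated projective) objects of $\Modr\cR$, and both $\sK$ and $\Modrfl\cR$ are the closures of these under directed colimits --- you have simply written out in full the details (full faithfulness, essential image via Lazard--Govorov, the exact structures, the pure-projectives) that the paper leaves to ``etc.'' The one phrase worth tightening is in your step~(iii): a morphism from $T\in\sK_\fp$ to the homology at $L$ need not lift to $\ker(L\rarrow M)$, so instead argue that every $T\rarrow\ker(L\rarrow M)$ factors through $K$ by exactness of the Hom-sequences at the middle term, and then use that $\sK_\fp$ is generating to conclude that $K\rarrow\ker(L\rarrow M)$ is an epimorphism (and similarly, for the cokernel of $L\rarrow M$, show that the image of $L$ exhausts $M$ rather than mapping $T$ into the cokernel directly).
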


\begin{proof}
 This is a standard observation.
 Notice that the functor $K\longmapsto\Hom_\sK({-},K)|_{\sK_\fp}$
identifies the full subcategory of finitely presentable objects
$\sK_\fp\subset\sK$ with the full subcategory of representable
functors in $\Funct_\ad(\sK_\fp^\sop,\Ab)=\Modr\cR$.
 For an arbitrary small preadditive category $\cR$, the representable
functors play the role of free modules with one generator in $\Modr\cR$.
 When $\cR$ is an idempotent-complete additive category, as in
the situation at hand, these are the same things as the finitely
generated projective modules.
 It remains to recall that the objects of $\sK$ are the directed
colimits of the objects from $\sK_\fp$, while the flat modules are
the directed colimits of the finitely generated projective modules, etc.
\end{proof}

\Section{Two Instances of the Hill Lemma}
\label{hill-lemma-secn}

 The \emph{Hill lemma}~\cite[Theorem~7.10]{GT}, \cite[Theorem~2.1]{Sto0}
is a general property of modules or Grothendieck category objects with
ordinal-indexed filtrations, which becomes particularly important when
the indexing ordinal is large as compared to the presentability ranks
of the successive quotient modules/objects in the filtration.
 The Hill lemma tells that, given one such filtration on a particular
object, one can produce many similar filtrations.

 In this paper we apply the Hill lemma in locally finitely presentable
abelian categories~$\sK$.
 We do not reproduce the lengthy general formulation of the Hill lemma
(referring the reader instead to~\cite{GT,Sto0}), but only state two
particular cases or corollaries that are relevant for our purposes.

\begin{lem} \label{fin-gen-subobject-hill-lemma}
 Let\/ $\sK$ be a locally finitely presentable abelian category and\/
$\sS\subset\sK_\fp$ be a class of finitely presentable objects closed
under extensions.
 Let $P\in\Fil(\sS)\subset\sK$ be an\/ $\sS$\+filtered object
(as defined in Section~\ref{cotorsion-pairs-secn}), and let
$Q\subset P$ be a finitely generated subobject.
 Then there exists an intermediate subobject $Q\subset S\subset P$
such that $S\in\sS$.
\end{lem}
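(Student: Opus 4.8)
The plan is to deduce this from the Hill lemma in its finitary instance ($\kappa=\aleph_0$): in the module case this is \cite[Theorem~7.10]{GT}, and for a general Grothendieck --- in particular, locally finitely presentable abelian --- category it is \cite[Theorem~2.1]{Sto0}. First I would fix an $\sS$\+filtration $(P_i\subset P)_{0\le i\le\alpha}$ witnessing $P\in\Fil(\sS)$ (passing to a set of isomorphism-class representatives of $\sS$ if needed). Since every object of $\sS$ is finitely presentable, i.e., $\aleph_0$\+presentable, the Hill lemma will produce a family $\mathcal H$ of subobjects of $P$ such that: $P_i\in\mathcal H$ for all $i$, hence $0=P_0\in\mathcal H$; the family $\mathcal H$ is closed under arbitrary sums and intersections; for any $M\subset N$ in $\mathcal H$ the quotient $N/M$ lies in $\Fil(\sS)$; and, most importantly, for every $M\in\mathcal H$ and every finitely generated subobject $X\subset P$ there is an $N\in\mathcal H$ with $M+X\subset N$ such that $N/M$ is finitely presentable. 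Applying this last property with $M=0$ and $X=Q$ then yields $S\in\mathcal H$ with $Q\subset S\subset P$ such that $S=S/0$ is both finitely presentable and $\sS$\+filtered.

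The remaining task will be to upgrade ``$S$ is finitely presentable and $S\in\Fil(\sS)$'' to ``$S\in\sS$''; this is exactly where closure of $\sS$ under extensions is used. I would argue as follows, assuming $S\ne0$. Fix a strictly increasing $\sS$\+filtration $0=S_0\subsetneq S_1\subsetneq\dotsb\subsetneq S_\beta=S$. The ordinal $\beta$ cannot be a limit: otherwise $S=\bigcup_{i<\beta}S_i$ would, by finite generation of $S$, coincide with some $S_i$, $i<\beta$, contradicting strictness. So $\beta=\gamma+1$, and $S/S_\gamma=S_\beta/S_\gamma$ belongs to $\sS\subset\sK_\fp$, so is finitely presentable; as $S$ too is finitely presentable, it follows that $S_\gamma$ is finitely generated (a subobject of a finitely generated object with finitely presentable quotient is finitely generated). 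Iterating this with $S_\gamma$ in place of $S$ descends through strictly decreasing ordinals, so well-foundedness forces $\beta$ to be a finite ordinal $n\ge1$. Finally, since $\sS$ is closed under extensions and $S_1=S_1/S_0\in\sS$, the short exact sequences $0\rarrow S_j\rarrow S_{j+1}\rarrow S_{j+1}/S_j\rarrow0$ give, by induction on $j$, that $S_j\in\sS$ for $1\le j\le n$; in particular $S=S_n\in\sS$, and $Q\subset S\subset P$ is the subobject we want.

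The main obstacle will be the first step: invoking the Hill lemma in the locally finitely presentable categorical setting with the finitary cardinal $\aleph_0$ and extracting a member $S\in\mathcal H$ that is genuinely finitely presentable --- not just ``small'' in some transfinite sense --- while still containing the prescribed finitely generated $Q$. Once that is secured, the passage from ``finitely presentable and $\Fil(\sS)$\+filtered'' to ``lies in $\sS$'' is the elementary well-foundedness bookkeeping above; the only care needed there is the degenerate case $S=0$, which either does not occur (when $Q\ne0$) or can be dispensed with by first enlarging $Q$ to a nonzero finitely generated subobject of $P$. Alternatively, one can observe that the Hill lemma directly equips each such $N/M$ with an $\sS$\+filtration extracted from the given one, which for $\kappa=\aleph_0$ is already finite, thereby short-circuiting the well-foundedness argument.
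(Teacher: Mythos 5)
Your proof is correct and takes essentially the same route as the paper, whose entire proof is the citation of the Hill lemma in the form of \cite[Theorem~2.1, properties (H3)--(H4)]{Sto0} for the cardinal $\kappa=\aleph_0$. The only difference is that you explicitly carry out the final elementary step of upgrading ``finitely presentable and $\sS$\+filtered'' to ``belongs to $\sS$'' via closure of $\sS$ under extensions, which the paper leaves implicit in the words ``particular case''.
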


\begin{proof}
 This is a particular case of~\cite[Theorem~2.1\,(H3\+-H4)]{Sto0}
applied in the case of the countable cardinal $\kappa=\aleph_0$.
\end{proof}

\begin{cor} \label{fp-projective-fin-generated-cor}
\textup{(a)} In a locally finitely presentable abelian category,
any finitely generated fp\+projective object is finitely
presentable. \par
\textup{(b)} In a locally coherent abelian category, any finitely
generated subobject of an fp\+projective object is finitely
presentable.
\end{cor}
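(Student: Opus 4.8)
The plan is to reduce both parts to the Hill lemma in the form of Lemma~\ref{fin-gen-subobject-hill-lemma}, applied to the class $\sS=\sK_\fp$ of finitely presentable objects, which is closed under extensions as recalled in Section~\ref{loc-fin-pres-secn}. The common starting point is the equality $\sK_\proj^\fp=\Fil(\sK_\fp)^\oplus$ established above: any fp\+projective object $P$ is a direct summand of some $\sK_\fp$\+filtered object, so I would fix a decomposition $F=P\oplus P'$ in $\sK$ with $F\in\Fil(\sK_\fp)$, writing $\iota\:P\rarrow F$ for the inclusion and $r\:F\rarrow P$ for the retraction, so that $r\iota=\id_P$.

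For part~(a), where $\sK$ is merely locally finitely presentable and $P$ is assumed finitely generated, the object $P$ is a finitely generated subobject of $F$ via~$\iota$, so Lemma~\ref{fin-gen-subobject-hill-lemma} yields an intermediate subobject $P\subset S\subset F$ with $S\in\sK_\fp$. Writing $j\:P\rarrow S$ and $k\:S\rarrow F$ for the inclusions, one has $kj=\iota$, and the composite $rk\:S\rarrow P$ satisfies $(rk)j=r\iota=\id_P$; thus $P$ is a direct summand of~$S$. Since the class of finitely presentable objects is closed under direct summands (a retract of the directed-colimit-preserving functor $\Hom_\sK(S,{-})$ again preserves directed colimits), I conclude that $P\in\sK_\fp$.

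For part~(b), where $\sK$ is locally coherent and $Q\subset P$ is a finitely generated subobject, the chain $Q\subset P\subset F$ exhibits $Q$ as a finitely generated subobject of $F$, so Lemma~\ref{fin-gen-subobject-hill-lemma} again provides an intermediate subobject $Q\subset S\subset F$ with $S\in\sK_\fp$. Then $Q$ is a finitely generated subobject of the finitely presentable object~$S$, and by the characterization of local coherence recalled in Section~\ref{loc-fin-pres-secn} --- every finitely generated subobject of a finitely presentable object is finitely presentable --- the object $Q$ is finitely presentable.

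The argument is short, and the only places calling for a little care are formal ones: that a direct summand of $F$ is genuinely a subobject to which Lemma~\ref{fin-gen-subobject-hill-lemma} applies, and that $\sK_\fp$ is closed under direct summands, which is what legitimizes the final step of part~(a). All the substance is supplied by the Hill lemma together with the presentation $\sK_\proj^\fp=\Fil(\sK_\fp)^\oplus$, so I do not anticipate a serious obstacle.
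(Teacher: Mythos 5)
Your proposal is correct and follows essentially the same route as the paper's own proof: in both parts one realizes the fp\+projective object as a direct summand (hence subobject) of a $\sK_\fp$\+filtered object and applies Lemma~\ref{fin-gen-subobject-hill-lemma} to produce an intermediate finitely presentable $S$, concluding in~(a) that the object is a retract of $S$ and in~(b) by the local coherence characterization. The extra care you take with the retraction $(rk)j=\id_P$ and the closure of $\sK_\fp$ under direct summands is exactly the implicit content of the paper's shorter wording.
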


\begin{proof}
 To prove part~(a), let $Q$ be a finitely generated fp\+projective
object in a locally finitely presentable abelian category~$\sK$.
 Then $Q$ is a direct summand of a $\sK_\fp$\+filtered object $P$,
so we have two morphisms $Q\rarrow P\rarrow Q$ with the composition
equal to~$\id_Q$.
 By Lemma~\ref{fin-gen-subobject-hill-lemma}, there exists a finitely
presentable subobject $S\subset P$ such that the morphism $Q\rarrow P$
factorizes as $Q\rarrow S\rarrow P$.
 It follows that $Q$ is a direct summand of~$S$, hence $Q$ is also
finitely presentable.

 Part~(b): Let $Q$ be a finitely generated subobject of
an fp\+projective object in a locally coherent category~$\sK$.
 Then $Q$ is also a subobject of a $\sK_\fp$\+filtered object.
 By Lemma~\ref{fin-gen-subobject-hill-lemma}, it follows that $Q$
is a subobject of a finitely presentable object $S\in\sK_\fp$.
 It remains to recall that in a locally coherent category any finitely
generated subobject of a finitely presentable object is finitely
presentable.

 Both the assertions~(a) and~(b) are also provable without
the Hill lemma.
 For a proof of~(a) (in the case of module categories),
see~\cite[Theorem~2.1.10]{Gla}.
 A proof of~(b) can be found in~\cite[Lemma~1.5]{Pfp}.
 The arguments above are particularly neat and transparent, though.
\end{proof}

\begin{cor} \label{two-fp-cotors-pairs-agree-equiv-loc-coherent}
 The following conditions are equivalent for a locally finitely
presentable abelian category\/~$\sK$:
\begin{enumerate}
\item the cotorsion pair $(\sK_\proj^\fp$, $\sK_\inj^\fp)$ is
hereditary in\/~$\sK$;
\item all weakly fp\+projective objects in\/ $\sK$ are fp\+projective;
\item all fp\+injective objects in\/ $\sK$ are strongly fp\+injective;
\item the category\/ $\sK$ is locally coherent.
\end{enumerate}
\end{cor}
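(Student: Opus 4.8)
The plan is to prove $(1)\Leftrightarrow(2)\Leftrightarrow(3)$ by a purely formal manipulation of the two cotorsion pairs $(\sK_\proj^\fp,\sK_\inj^\fp)$ and $(\sK_\proj^\wfp,\sK_\inj^\sfp)$, and then to close the loop with $(1)\Leftrightarrow(4)$, which is where the genuine content enters, via Corollary~\ref{fp-projective-fin-generated-cor} (the Hill lemma) on one side and Lemma~\ref{generated-by-self-generating} on the other.

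First I would record the two inclusions valid in any locally finitely presentable abelian category: $\sK_\inj^\sfp\subseteq\sK_\inj^\fp$ (take $n=1$), hence $\sK_\proj^\fp\subseteq\sK_\proj^\wfp$. Since both $(\sK_\proj^\fp,\sK_\inj^\fp)$ and $(\sK_\proj^\wfp,\sK_\inj^\sfp)$ are cotorsion pairs, and a cotorsion pair is determined by either of its two classes, the equalities $\sK_\proj^\fp=\sK_\proj^\wfp$ and $\sK_\inj^\fp=\sK_\inj^\sfp$ are equivalent; this is $(2)\Leftrightarrow(3)$. For $(3)\Rightarrow(1)$: if $\sK_\inj^\fp=\sK_\inj^\sfp$, then $(\sK_\proj^\fp,\sK_\inj^\fp)=(\sK_\proj^\wfp,\sK_\inj^\sfp)$, which is hereditary by Proposition~\ref{hereditary-generated-prop} (as noted in Section~\ref{loc-fin-pres-secn}). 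For $(1)\Rightarrow(3)$: the class $\sK_\proj^\fp$ is generating (it contains the coproducts of finitely presentable objects) and $\sK_\inj^\fp$ is cogenerating (it contains the injectives), so a hereditary cotorsion pair satisfies $\Ext^n_\sK(A,B)=0$ for all $A\in\sK_\proj^\fp$, $B\in\sK_\inj^\fp$ and all $n\ge1$; restricting to $A\in\sK_\fp\subseteq\sK_\proj^\fp$ says exactly that every fp\+injective object is strongly fp\+injective.

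For $(4)\Rightarrow(3)$ I would invoke Lemma~\ref{generated-by-self-generating}. The class $\sK_\fp$ is self-generating in any locally finitely presentable abelian category: given an epimorphism $K\rarrow S$ with $S\in\sK_\fp$, write $K$ as a directed colimit of finitely presentable objects $K_i$; the images of the composites $K_i\rarrow K\rarrow S$ form a directed family of subobjects of $S$ whose union is $S$, and since $S$ is finitely generated one of these images is already all of $S$, so some $K_i\rarrow S$ is an epimorphism factoring the original one. If $\sK$ is moreover locally coherent, $\sK_\fp$ is closed under kernels of epimorphisms, so Lemma~\ref{generated-by-self-generating} gives $\sK_\fp^{\perp_1}=\sK_\fp^{\perp_{\ge1}}$, i.e.\ $\sK_\inj^\fp=\sK_\inj^\sfp$, which is~$(3)$.

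Finally, $(1)\Rightarrow(4)$. Let $Q$ be a finitely generated subobject of a finitely presentable object $T$; since $Q$ is finitely generated and $T$ is finitely presentable, the quotient $T/Q$ is again finitely presentable (Section~\ref{loc-fin-pres-secn}), so $0\rarrow Q\rarrow T\rarrow T/Q\rarrow0$ is short exact with $T,\,T/Q\in\sK_\fp\subseteq\sK_\proj^\fp$. Under hypothesis $(1)$ the class $\sK_\proj^\fp$ is closed under kernels of epimorphisms, so $Q$ is fp\+projective; being finitely generated, $Q$ is then finitely presentable by Corollary~\ref{fp-projective-fin-generated-cor}(a). Hence every finitely generated subobject of a finitely presentable object is finitely presentable, which is precisely local coherence of $\sK$. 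Combining the implications yields $(1)\Leftrightarrow(2)\Leftrightarrow(3)\Leftrightarrow(4)$. I expect the only delicate point to be the self-generation of $\sK_\fp$ without any coherence assumption; everything else is bookkeeping with cotorsion pairs plus the already-established Hill-lemma corollary, so the step I would most want to double-check is that the directed-colimit argument for self-generation goes through cleanly using exactness of filtered colimits in a Grothendieck category.
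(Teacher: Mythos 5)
Your proposal is correct and follows essentially the same route as the paper: the formal cotorsion-pair bookkeeping for $(1)\Leftrightarrow(2)\Leftrightarrow(3)$, Lemma~\ref{generated-by-self-generating} applied to the self-generating class $\sK_\fp$ for $(4)\Rightarrow(3)$, and Corollary~\ref{fp-projective-fin-generated-cor}(a) for $(1)\Rightarrow(4)$ (where you use a finitely generated subobject of a finitely presentable object instead of the kernel of an epimorphism between finitely presentable objects --- an equivalent characterization of local coherence recorded in Section~\ref{loc-fin-pres-secn}). The only addition is that you spell out the directed-colimit verification that $\sK_\fp$ is self-generating, which the paper asserts without proof; that verification is correct.
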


\begin{proof}
 (1)\,$\Longrightarrow$\,(4)
 It suffices to show that the kernel $Q$ of any epimorphism $S\rarrow T$
between finitely presentable objects $S$, $T\in\sK$ is finitely
presentable.
 Notice that the object $Q$ is always finitely generated.
 The objects $S$ and $T$ are fp\+projective.
 Since the cotorsion pair $(\sK_\proj^\fp,\sK_\inj^\fp)$ is hereditary
by assumption, it follows that the object $Q$ is fp\+projective.
 It remains to invoke Corollary~\ref{fp-projective-fin-generated-cor}(a)
in order to conclude that $Q$ is finitely presentable.

 (4)\,$\Longrightarrow$\,(3)
 The class $\sS=\sK_\fp$ of all finitely presentable objects is
self-generating in any locally finitely presentable abelian
category~$\sK$.
 In a locally coherent category, it is also closed under the kernels
of epimorphisms, so Lemma~\ref{generated-by-self-generating} applies.

 (1)\,$\Longrightarrow$\,(3)\,$\Longrightarrow$\,(2) hold by
the definitions.
 
 (2)\,$\Longrightarrow$\,(3) holds because both
$(\sK_\proj^\fp$, $\sK_\inj^\fp)$ and
$(\sK_\proj^\wfp$, $\sK_\inj^\sfp)$ are cotorsion pairs in $\sK$,
and a cotorsion pair is determined by its left class.
 
 (2) $+$~(3) $\Longrightarrow$~(1) holds because the cotorsion pair
$(\sK_\proj^\wfp$, $\sK_\inj^\sfp)$ is always hereditary (as explained
in Section~\ref{loc-fin-pres-secn}).
\end{proof}

 The following lemma plays a crucial role in the proofs of the main
theorems in Section~\ref{proofs-main-results-secn} (specifically,
Theorem~\ref{lfp-category-hot-hom-fp-proj-fp-inj-thm}).

\begin{lem} \label{complexes-deconstructed}
 Let $\sK$ be a locally finitely presentable abelian category and\/
$\sS\subset\sK_\fp$ be a class of finitely presentable objects closed
under extensions.
 Let $P^\bu\in\sC(\Fil(\sS))$ be a complex in\/ $\sK$ whose terms are\/
$\sS$\+filtered objects.
 Then the complex $P^\bu$, viewed as an object of the abelian category
of complexes\/ $\sC(\sK)$, is filtered by bounded below complexes
whose terms belong to\/~$\sS$.
\end{lem}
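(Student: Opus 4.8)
The plan is to run the Hill lemma in each cohomological degree and then filter $P^\bu$, as an object of the abelian category of complexes $\sC(\sK)$, by subcomplexes assembled from the resulting Hill families, enlarging such a subcomplex one ``upward cascade'' at a time.

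First I would fix, for every $i\in\boZ$, an $\sS$\+filtration of the term $P^i$ and invoke the Hill lemma~\cite[Theorem~2.1]{Sto0} with $\kappa=\aleph_0$ to obtain a family $\mathcal H_i$ of subobjects of $P^i$ such that: $0$ and $P^i$ lie in $\mathcal H_i$, and $\mathcal H_i$ is closed under arbitrary sums and intersections in $\sK$; for all $M\subseteq M'$ in $\mathcal H_i$ the quotient $M'/M$ lies in $\Fil(\sS)$; and every finitely generated subobject of $P^i$ is contained in a finitely generated member of $\mathcal H_i$. A finitely generated member $N\in\mathcal H_i$ then automatically lies in $\sS$: it lies in $\Fil(\sS)$ by the second property (take $M=0$), and a finitely generated object of $\Fil(\sS)$ lies in $\sS$ by Lemma~\ref{fin-gen-subobject-hill-lemma}. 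I would call a subcomplex $F^\bu\subseteq P^\bu$ \emph{admissible} if $F^i\in\mathcal H_i$ for every~$i$; since subobjects and directed colimits in $\sC(\sK)$ are computed termwise, the admissible subcomplexes form a class containing $0$ and $P^\bu$ and closed under unions of chains.

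The heart of the argument is to show that any admissible $F^\bu\subsetneq P^\bu$ can be enlarged to an admissible $G^\bu$ with $G^\bu/F^\bu$ a bounded below complex whose terms belong to~$\sS$. Since $F^\bu\ne P^\bu$, I would pick an index $n$ and a finitely generated subobject $Q\subseteq P^n$ with $Q\not\subseteq F^n$, choose $N_0\in\mathcal H_n\cap\sS$ with $Q\subseteq N_0$, and then chase the differential upward: given $N_k\in\mathcal H_{n+k}\cap\sS$, the image $d(N_k)\subseteq P^{n+k+1}$ is finitely generated (a quotient of the finitely presentable object $N_k$), so I can choose $N_{k+1}\in\mathcal H_{n+k+1}\cap\sS$ with $d(N_k)\subseteq N_{k+1}$. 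Set $G^i=F^i$ for $i<n$ and $G^{n+k}=F^{n+k}+N_k$ for $k\ge0$; no enlargement is needed below degree $n$, since $d(F^{n-1})\subseteq F^n\subseteq G^n$ already, which is also why $G^\bu/F^\bu$ turns out to be bounded below. The inclusions $d(N_k)\subseteq N_{k+1}$ and $d(F^i)\subseteq F^{i+1}$ make $G^\bu$ a subcomplex; it is admissible by the lattice closure of the families $\mathcal H_i$, and it strictly contains $F^\bu$ because $Q\subseteq N_0\subseteq G^n$ but $Q\not\subseteq F^n$. Finally $(G^\bu/F^\bu)^i=0$ for $i<n$, while $(G^\bu/F^\bu)^{n+k}\cong N_k/(N_k\cap F^{n+k})$ for $k\ge0$; this last object is the quotient of two comparable members of $\mathcal H_{n+k}$, hence lies in $\Fil(\sS)$, and being finitely generated it lies in $\sS$ by Lemma~\ref{fin-gen-subobject-hill-lemma}.

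With the enlargement step in hand, the lemma follows by the standard transfinite recursion on admissible subcomplexes: set $F_0^\bu=0$, let $F_{i+1}^\bu$ be an enlargement of $F_i^\bu$ as above whenever $F_i^\bu\ne P^\bu$, and put $F_j^\bu=\bigcup_{i<j}F_i^\bu$ at limit ordinals~$j$. The chain $(F_i^\bu)$ is strictly increasing until it reaches $P^\bu$, which it must do at some ordinal $\alpha$ because $\sC(\sK)$ is Grothendieck and hence well-powered; then $(F_i^\bu)_{0\le i\le\alpha}$ is a filtration of $P^\bu$ in $\sC(\sK)$ whose successive quotients are bounded below complexes with terms in~$\sS$. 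The main obstacle is exactly the point where the full Hill lemma is needed rather than merely Lemma~\ref{fin-gen-subobject-hill-lemma}: one must enlarge $F^\bu$ simultaneously in all degrees while keeping every term inside its Hill family \emph{and} forcing every term of the successive quotient to lie honestly in $\sS$ (not just to be finitely generated), and it is precisely the lattice closure together with the subquotient property of the Hill families that achieves both at once. (A minor point: one should either assume $0\in\sS$ from the outset or read ``terms belong to~$\sS$'' as permitting zero terms, since $G^\bu/F^\bu$ vanishes in degrees below~$n$.)
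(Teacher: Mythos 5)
Your argument is correct and is essentially the argument the paper relies on: the paper's ``proof'' is just a citation of \v St\!'ov\'\i\v cek's Proposition~4.3 in~\cite{Sto0}, whose proof is exactly this termwise Hill-family construction with subcomplexes enlarged by chasing the differential forward from a chosen degree. Your parenthetical about zero terms is harmless, since a filtration with some zero successive quotients (or quotient terms) can always be reindexed to discard them.
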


\begin{proof}
 This is~\cite[(proof of) Proposition~4.3]{Sto0}
for $\kappa=\aleph_0$.
\end{proof}

\Section{Proofs of Main Results}  \label{proofs-main-results-secn}

 The following theorem is the main result of this paper.

\begin{thm} \label{lfp-category-cycles-in-fp-projective-thm}
 Let\/ $\sK$ be a locally finitely presentable abelian category, and
let $P^\bu$ be an acyclic complex in\/ $\sK$ whose terms $P^n$ are
fp\+projective objects for all $n\in\boZ$.
 Denote by $Z^n\in\sK$ the objects of cocycles of the acyclic
complex~$P^\bu$.
 Then all the objects $Z^n$ are weakly fp\+projective, that is
$Z^n\in\sK_\proj^\wfp$ for all $n\in\boZ$.
\end{thm}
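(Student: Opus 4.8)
The plan is to deduce this from a "dg" statement about morphisms of complexes, which in turn reduces to the pure-projective periodicity Theorem \ref{stovicek-pure-projective-objects}(b). Concretely, I would first establish the companion result (stated above as Theorem \ref{lfp-category-hot-hom-fp-proj-fp-inj-thm}): if $P^\bu$ is any complex of fp-projective objects and $J^\bu$ is an acyclic complex of fp-injective objects with fp-injective objects of cocycles, then every morphism $P^\bu \rarrow J^\bu$ in $\sC(\sK)$ is null-homotopic. Granting that, here is how the theorem at hand follows. The cotorsion pair $(\sK_\proj^\wfp, \sK_\inj^\sfp)$ is hereditary and complete (Proposition \ref{hereditary-generated-prop}), so to show $Z^n \in \sK_\proj^\wfp$ it suffices to show $\Ext^1_\sK(Z^n, J) = 0$ for every strongly fp-injective $J$. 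Splice the complex $P^\bu$ and note that $Z^n$ is the cokernel of $P^{n-2} \rarrow P^{n-1}$ and the kernel of $P^n \rarrow P^{n+1}$; since fp-projectivity passes to... actually the cleaner route: apply $\Hom_\sK(-, J)$ and a dimension-shift, or directly realize an extension of $Z^n$ by $J$ as coming from a chain map. Given a class $\xi \in \Ext^1_\sK(Z^n, J)$ with $J$ strongly fp-injective, take a strongly-fp-injective coresolution $J \rarrow J^\bu$ (which is then an acyclic complex of fp-injectives with fp-injective — in fact strongly fp-injective — cocycles). Then $\xi$ is detected by a morphism from a suitable brutal truncation of $P^\bu$ into $J^\bu[k]$, and Theorem \ref{lfp-category-hot-hom-fp-proj-fp-inj-thm} forces it to vanish.

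For the dg statement itself, the key reduction is to pure acyclicity. Since $J^\bu$ is an acyclic complex of fp-injective objects with fp-injective cocycles, all its short exact sequences of cocycles $0 \rarrow Z^{n-1}(J^\bu) \rarrow J^n \rarrow Z^n(J^\bu) \rarrow 0$ are pure: indeed the monomorphism $Z^{n-1}(J^\bu) \rarrow J^n$ is a monomorphism out of an fp-injective object, hence pure by Lemma \ref{absolutely-pure}. Therefore $J^\bu$ is a \emph{pure acyclic} complex. Now by Lemma \ref{complexes-homotopy-hom-ext-lemma}, since each $J^i$ is fp-injective and each $P^i$ is fp-projective we have $\Ext^1_\sK(P^i, J^i) = 0$... but that only controls $\Hom_{\Hot(\sK)}(P^\bu, J^\bu[1])$, not $\Hom_{\Hot(\sK)}(P^\bu, J^\bu)$ directly. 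So instead I would argue as follows: by Theorem \ref{eklof-trlifaj-theorem}(b), $P^n \in \Fil(\sK_\fp)^\oplus$, so $P^\bu$ is a direct summand of a complex whose terms are $\sK_\fp$-filtered. By Lemma \ref{complexes-deconstructed} applied with $\sS = \sK_\fp$, that complex is filtered, \emph{as an object of $\sC(\sK)$}, by bounded-below complexes with terms in $\sK_\fp$. A bounded-below complex of finitely presentable objects is a complex of pure-projective objects, so by Theorem \ref{stovicek-pure-projective-objects}(b) every morphism from it into the pure acyclic complex $J^\bu$ is null-homotopic — equivalently, $\Hom_{\Hot(\sK)}$ from it into $J^\bu$ vanishes. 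The Eklof lemma (Lemma \ref{eklof-lemma}), applied in the abelian category $\sC(\sK)$ to the class of complexes $X^\bu$ with $\Hom_{\Hot(\sK)}(X^\bu, J^\bu) = 0$ — which, via Lemma \ref{complexes-homotopy-hom-ext-lemma}, is $\Ext$-orthogonality-like and closed under transfinite extension once the degreewise-splitness is handled — then propagates vanishing from the filtration pieces to the whole filtered complex, hence to its direct summand $P^\bu$.

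The main obstacle I anticipate is precisely this last propagation step: the homotopy category $\Hot(\sK)$ is not abelian, so the Eklof lemma does not apply to it verbatim, and I must phrase everything inside $\sC(\sK)$. The correct move is to work with $\Ext^1_{\sC(\sK)}(-, J^\bu)$ and exploit Lemma \ref{complexes-homotopy-hom-ext-lemma}: a morphism $P^\bu \rarrow J^\bu$ is null-homotopic iff the corresponding degreewise-split extension of $P^\bu[-1]$ by $J^\bu$ splits; degreewise-splitness is automatic here because $\Ext^1_\sK(P^{i+1}, J^i) = 0$ for all $i$ (fp-projective versus fp-injective). Thus vanishing of all such extension classes is equivalent to $P^\bu[-1] \in {}^{\perp_1}\{J^\bu\}$ in the \emph{abelian} category $\sC(\sK)$, to which Lemma \ref{eklof-lemma} does apply. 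One then checks that bounded-below complexes with finitely presentable terms lie in this left orthogonal (that is the content of Theorem \ref{stovicek-pure-projective-objects}(b), once we observe that degreewise-split extensions by a bounded-below complex of pure-projectives are controlled by the same homotopy group), and concludes by the filtration from Lemma \ref{complexes-deconstructed} together with closure under direct summands. A secondary technical point is verifying that $J^\bu$ in the application to the original theorem can indeed be taken acyclic with fp-injective cocycles: a strongly fp-injective object admits a coresolution by (strongly) fp-injectives whose cocycles are again strongly fp-injective, because the class $\sK_\inj^\sfp$ is coresolving, and strongly fp-injective objects are in particular fp-injective — so the hypotheses of Theorem \ref{lfp-category-hot-hom-fp-proj-fp-inj-thm} are met.
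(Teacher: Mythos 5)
Your proposal is correct and follows essentially the same route as the paper: reduce to the dg statement (Theorem~\ref{lfp-category-hot-hom-fp-proj-fp-inj-thm}) via the augmented injective coresolution $J^\bu=(Y\to I^\bu)$ of a strongly fp\+injective object, and prove the dg statement by observing that $J^\bu$ is pure acyclic (Lemma~\ref{absolutely-pure}), replacing $P^\bu$ by a complex with $\sK_\fp$\+filtered terms, deconstructing it via Lemma~\ref{complexes-deconstructed}, and propagating the vanishing from Theorem~\ref{stovicek-pure-projective-objects}(b) through the Eklof lemma in $\sC(\sK)$ using Lemma~\ref{complexes-homotopy-hom-ext-lemma}. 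The only (cosmetic) difference is in the final deduction, where the paper shows $\Hom_\sK(P^\bu,Y)$ is acyclic by comparing the acyclic complexes $\Hom_\sK(P^\bu,J^\bu)$ and $\Hom_\sK(P^\bu,I^\bu)$, whereas you realize a class in $\Ext^1_\sK(Z^n,Y)$ as a chain map into $J^\bu$ and kill it directly; both amount to the same computation.
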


 The proof of
Theorem~\ref{lfp-category-cycles-in-fp-projective-thm} is based on
the next Theorem~\ref{lfp-category-hot-hom-fp-proj-fp-inj-thm}.

\begin{thm} \label{lfp-category-hot-hom-fp-proj-fp-inj-thm}
 Let\/ $\sK$ be a locally finitely presentable abelian category,
let $P^\bu\in\sC(\sK_\proj^\fp)$ be a complex of fp\+projective
objects in\/ $\sK$, and let $J^\bu$ be an acyclic complex of
fp\+injective objects in\/ $\sK$ with fp\+injective objects of cocycles.
 Then any morphism of complexes $P^\bu\rarrow J^\bu$ is homotopic
to zero.
\end{thm}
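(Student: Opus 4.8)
The plan is to reinterpret the homotopy-vanishing assertion as the vanishing of an $\Ext^1$ group in the Grothendieck category $\sC(\sK)$, and then to prove that vanishing by an Eklof-lemma induction that reduces the problem to Theorem~\ref{stovicek-pure-projective-objects}(b). First I would note that $J^\bu$ is a \emph{pure} acyclic complex: each short exact sequence $0\rarrow Z^n\rarrow J^n\rarrow Z^{n+1}\rarrow0$ formed by its objects of cocycles has fp\+injective terms, hence is pure by Lemma~\ref{absolutely-pure}. Next, since an fp\+projective object is $\Ext^1$-orthogonal to an fp\+injective one, we have $\Ext^1_\sK(P^i,J^{i-1})=0$ for all $i\in\boZ$; hence Lemma~\ref{complexes-homotopy-hom-ext-lemma}, applied with $A^\bu=P^\bu$ and $B^\bu=J^\bu[-1]$, gives a natural isomorphism $\Hom_{\Hot(\sK)}(P^\bu,J^\bu)\simeq\Ext^1_{\sC(\sK)}(P^\bu,J^\bu[-1])$. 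So it suffices to show that $P^\bu$ belongs to ${}^{\perp_1}\{J^\bu[-1]\}$, the left $\Ext^1$-orthogonal class computed in $\sC(\sK)$.

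The crucial observation is that this orthogonality class contains every bounded below complex $B^\bu$ of finitely presentable objects of $\sK$. Indeed, each term $B^i\in\sK_\fp$ is pure-projective, and again $\Ext^1_\sK(B^i,J^{i-1})=0$ since $B^i$ is finitely presentable and $J^{i-1}$ is fp\+injective; so Lemma~\ref{complexes-homotopy-hom-ext-lemma} yields $\Ext^1_{\sC(\sK)}(B^\bu,J^\bu[-1])\simeq\Hom_{\Hot(\sK)}(B^\bu,J^\bu)$, and the latter group vanishes by Theorem~\ref{stovicek-pure-projective-objects}(b), because $B^\bu$ is a complex of pure-projectives and $J^\bu$ is pure acyclic.

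It then remains to deduce $P^\bu\in{}^{\perp_1}\{J^\bu[-1]\}$. The class ${}^{\perp_1}\{J^\bu[-1]\}\subset\sC(\sK)$ is closed under direct summands and, by the Eklof lemma (Lemma~\ref{eklof-lemma}, valid in the Grothendieck category $\sC(\sK)$), under transfinitely iterated extensions. The complex $P^\bu$ is a direct summand in $\sC(\sK)$ of a complex with $\sK_\fp$-filtered terms: choosing for each $i$ an object $Q^i$ with $P^i\oplus Q^i=F^i\in\Fil(\sK_\fp)$ (possible since $\sK_\proj^\fp=\Fil(\sK_\fp)^\oplus$), the complex $\tilde P^\bu$ with $\tilde P^i=F^i$ and differential $(p,q)\longmapsto(d_{P^\bu}p,0)$ contains $P^\bu$ as a direct summand via the evident inclusion and projection, both of which are chain maps. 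Since $\sK_\fp$ is closed under extensions in $\sK$, Lemma~\ref{complexes-deconstructed} applies with $\sS=\sK_\fp$ and shows that $\tilde P^\bu$, as an object of $\sC(\sK)$, is filtered by bounded below complexes with terms in $\sK_\fp$. Combining this with the previous paragraph and the Eklof lemma gives $\tilde P^\bu\in{}^{\perp_1}\{J^\bu[-1]\}$, and hence so is its direct summand $P^\bu$; by the first paragraph this means any $P^\bu\rarrow J^\bu$ is homotopic to zero.

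The part requiring the most care is the bridge between $\Hot(\sK)$ and $\sC(\sK)$: it works smoothly only because all the extensions involved are automatically degree-wise split, which is exactly where the orthogonality between fp\+projective and fp\+injective objects enters. The other delicate point is that a complex of fp\+projectives need not be homotopy equivalent to a complex of $\sK_\fp$-filtered objects; the remedy is the elementary direct-summand construction above (equip the complementary summands $Q^i$ with the zero differential), after which the real work is carried by the Hill lemma for complexes, Lemma~\ref{complexes-deconstructed}, together with the pure-projective periodicity theorem.
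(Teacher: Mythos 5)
Your proposal is correct and follows essentially the same route as the paper's proof: the same reduction via Lemma~\ref{complexes-homotopy-hom-ext-lemma} to an $\Ext^1$ computation in $\sC(\sK)$, the same use of Lemma~\ref{complexes-deconstructed} plus the Eklof lemma to reduce to bounded below complexes of finitely presentable objects, and the same appeal to Theorem~\ref{stovicek-pure-projective-objects}(b) combined with Lemma~\ref{absolutely-pure}. The only difference is cosmetic ordering: you perform the direct-summand reduction $P^\bu\subset\tilde P^\bu$ at the end using closure of the orthogonal class under summands, whereas the paper performs it at the outset.
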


 The proof of
Theorem~\ref{lfp-category-hot-hom-fp-proj-fp-inj-thm}, in turn, is based
on the following Theorem~\ref{stovicek-pure-neeman}.

\begin{thm}[St\!'ov\'\i\v cek~{\cite[Theorem~5.4]{Sto}}]
\label{stovicek-pure-neeman}
 Let\/ $\sK$ be a locally finitely presentable abelian category,
let $P^\bu\in\sC(\sK_\proj^\pur)$ be a complex of pure-projective
objects in\/ $\sK$, and let $X^\bu$ be a pure acyclic complex
in\/~$\sK$.
 Then any morphism of complexes $P^\bu\rarrow X^\bu$ is homotopic
to zero.
\end{thm}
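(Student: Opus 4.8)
The plan is to reduce the statement to Neeman's Theorem~\ref{neeman-flat-projective}(b) by reinterpreting the category of finitely presentable objects of $\sK$ as a ``ring with many objects''; this is precisely the device outlined in the introduction, and indeed the assertion is a verbatim restatement of Theorem~\ref{stovicek-pure-projective-objects}(b), so one could also simply cite \cite[Theorem~5.4]{Sto}.

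Concretely, I would first fix a small category $\cR$ equivalent to $\sK_\fp$ and apply Lemma~\ref{pure-exact-structure-as-flat-modules}: the functor $K\longmapsto\Hom_\sK({-},K)|_{\sK_\fp}$ is an equivalence $\sK\simeq\Modrfl\cR$ which carries the pure exact structure on $\sK$ to the exact structure on $\Modrfl\cR$ inherited from the abelian category $\Modr\cR$ and carries the pure-projective objects of $\sK$ to the projective right $\cR$\+modules. Transporting the data of the theorem, the complex $P^\bu$ becomes a complex of projective right $\cR$\+modules, while the pure acyclic complex $X^\bu$, being a splice of pure short exact sequences $0\rarrow Z^n\rarrow X^n\rarrow Z^{n+1}\rarrow0$ in $\sK$, becomes an acyclic complex in $\Modr\cR$; since every term $X^n$ and every object of cocycles $Z^n$ lies in $\sK$, all of them become flat right $\cR$\+modules, so $X^\bu$ becomes an acyclic complex of flat right $\cR$\+modules with flat modules of cocycles.

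Next I would apply Theorem~\ref{neeman-flat-projective}(b) with the ``ring with many objects''~$\cR$ in place of an ordinary ring --- the argument of~\cite{Neem} transfers to this generality, as $\Modr\cR$ behaves for these purposes exactly like a module category (projectives are the direct summands of coproducts of representables, flats are their directed colimits) --- to conclude that the transported morphism $P^\bu\rarrow X^\bu$ is null-homotopic in $\sC(\Modr\cR)$. Finally I would note that the components $h^n\:P^n\rarrow X^{n-1}$ of the null-homotopy are morphisms between flat right $\cR$\+modules, hence morphisms in the full subcategory $\Modrfl\cR\simeq\sK$, and therefore form a null-homotopy of the original morphism inside $\sC(\sK)$, as required.

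Since the result is due to \v St\!'ov\'\i\v cek, there is no essential difficulty here; the only points worth a word of care are the (routine) transfer of Neeman's theorem from rings to rings with many objects, and the verification --- immediate from Lemma~\ref{pure-exact-structure-as-flat-modules} once one recalls that every object of $\sK$ corresponds to a flat $\cR$\+module --- that a pure acyclic complex in $\sK$ really does transport to an acyclic complex of flats with flat modules of cocycles, which is exactly the hypothesis needed to feed into Theorem~\ref{neeman-flat-projective}(b).
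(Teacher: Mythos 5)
Your proposal is correct and follows exactly the paper's own route: fix a small category $\cR$ equivalent to $\sK_\fp$, apply Lemma~\ref{pure-exact-structure-as-flat-modules} to identify $\sK$ with $\Modrfl\cR$ (carrying pure-projectives to projectives and pure acyclic complexes to acyclic complexes of flats with flat cocycles), and invoke the many-objects version of Neeman's theorem, Theorem~\ref{neeman-theorem}. The extra details you supply (the transport of the cocycle objects and of the null-homotopy back to $\sK$) are exactly the steps the paper leaves implicit.
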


 Finally, the proof of Theorem~\ref{stovicek-pure-neeman} is based on
the next Theorem~\ref{neeman-theorem}.

\begin{thm}[Neeman~{\cite[Theorem~8.6]{Neem}}] \label{neeman-theorem}
 Let\/ $\cR$ be a small preadditive category, let $P^\bu\in
\sC(\Modrproj\cR)$ be a complex of projective objects in\/ $\Modr\cR$,
and let $X^\bu$ be an acyclic complex of flat right\/ $\cR$\+modules
with flat modules of cocycles.
 Then any morphism of complexes $P^\bu\rarrow X^\bu$ is homotopic
to zero.
\end{thm}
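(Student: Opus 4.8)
This theorem is quoted from Neeman's paper and is used here as a black box; below is a sketch of how one would prove it. Up to the routine passage between the small preadditive category $\cR$ and its associated ring with enough idempotents --- an exact equivalence of $\Modr\cR$ with the category of unitary modules, matching projectives with projectives and flats with flats, and extending termwise to complexes --- the assertion is exactly Theorem~\ref{neeman-flat-projective}(b), the homotopy-categorical strengthening of the Benson--Goodearl periodicity theorem (Theorem~\ref{benson-goodearl-flat-projective}); so this amounts to sketching a proof of that.

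First I would recast the conclusion as an $\Ext^1$-vanishing statement in the abelian category of complexes. Since each $P^n$ is projective, $\Ext^1_{\Modr\cR}(P^n,X^{n-1})=0$ for all $n$, so Lemma~\ref{complexes-homotopy-hom-ext-lemma} applied with $A^\bu=P^\bu$ and $B^\bu=X^\bu[-1]$ gives a natural isomorphism
\[
 \Hom_{\Hot(\Modr\cR)}(P^\bu,X^\bu)\;\simeq\;\Ext^1_{\sC(\Modr\cR)}(P^\bu,X^\bu[-1]).
\]
A shift of an acyclic complex of flat modules with flat modules of cocycles is again such a complex, so it suffices to show that $\Ext^1_{\sC(\Modr\cR)}(P^\bu,Y^\bu)=0$ for every complex $P^\bu$ of projective modules and every acyclic complex $Y^\bu$ of flat modules with flat modules of cocycles. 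The key structural input is that such a $Y^\bu$ is \emph{pure acyclic} --- each cocycle sequence $0\to Z^n\to Y^n\to Z^{n+1}\to 0$ is pure, since $Z^{n+1}$ is flat --- and, by a theorem of Neeman building on Benson--Goodearl, that a pure acyclic complex of flat modules is precisely a directed colimit, taken in $\sC(\Modr\cR)$, of contractible complexes of finitely generated projective modules. Granting this, a morphism $f\:P^\bu\to Y^\bu$ is shown null-homotopic in two stages: one first treats a ``small'' $P^\bu$ (all terms of bounded cardinality) by building a contracting homotopy through a back-and-forth induction against the colimit presentation of $Y^\bu$, combined with a Govorov--Lazard style approximation; the general case then follows by writing $P^\bu$ as a transfinite extension in $\sC(\Modr\cR)$ of small complexes of projectives (a standard Kaplansky-type deconstructibility of complexes of projective modules) and invoking the Eklof lemma (Lemma~\ref{eklof-lemma}, with $\sB=\{Y^\bu\}$) to conclude that $\Ext^1_{\sC(\Modr\cR)}(P^\bu,Y^\bu)=0$.

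I expect the main obstacle to be the interaction of the last two points, which is the heart of the Benson--Goodearl method: one cannot simply factor $f$ through a single contractible stage of the colimit, because a directed colimit of contractible complexes need not be contractible, so the contracting homotopy on a small $P^\bu$ must be assembled by a delicate countable (elementary-submodel) induction synchronised with the colimit presentation of $Y^\bu$. The reduction to the statement about $\Ext^1_{\sC(\Modr\cR)}$, the Eklof-lemma assembly step, and the passage from rings to small preadditive categories are, by comparison, routine.
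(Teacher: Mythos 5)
Your treatment matches the paper's: Theorem~\ref{neeman-theorem} is not proved there but simply cited as the ``rings with many objects'' version of Neeman's Theorem~8.6\,(iii)\,$\Rightarrow$\,(i), with the remark that the passage from unital rings to small preadditive categories is routine and provable by the same method. Your supplementary sketch of that method --- the reduction via the degree-wise split $\Ext^1$ in $\sC(\Modr\cR)$, the identification of acyclic complexes of flats with flat cocycles as pure acyclic and hence as directed colimits of contractible complexes of finitely generated projectives, and the assembly of the general case from small complexes of projectives by deconstruction and the Eklof lemma --- is a fair outline of Neeman's argument and goes beyond what the paper records, correctly flagging the Benson--Goodearl back-and-forth as the genuinely hard step.
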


\begin{proof}[Proof of Theorem~\ref{neeman-theorem}]
 This is a straightforward generalization
of~\cite[Theorem~8.6\,(iii)\,$\Rightarrow$\,(i)]{Neem} from modules
over the conventional unital rings to modules over ``rings with enough
idempotents'' or (which is essentially the same) ``rings with many
objects'' or (which is the same) small preadditive categories.
 As usual for such generalizations, it is provable by the same method.
\hbadness=2650
\end{proof}

\begin{proof}[Proof of Theorem~\ref{stovicek-pure-neeman}]
 This assertion, stated in the introduction as
Theorem~\ref{stovicek-pure-projective-objects}(b), is one specific
aspect of a particular case of \v St\!'ov\'\i\v cek's
\cite[Theorem~5.4]{Sto}, provable by reduction to Neeman's theorem.
 Let $\cR$ be a small category equivalent to\/~$\sK_\fp$.
 Applying Lemma~\ref{pure-exact-structure-as-flat-modules},
one reduces Theorem~\ref{stovicek-pure-neeman} to
Theorem~\ref{neeman-theorem}.
\end{proof}

\begin{proof}[Proof of
Theorem~\ref{lfp-category-hot-hom-fp-proj-fp-inj-thm}]
 As explained in Section~\ref{loc-fin-pres-secn}, any fp\+projective
object is a direct summand of an object filtered by finitely
presentable ones.
 In the context of the theorem, $P^\bu$ is an arbitrary complex of
fp\+projective objects.
 So a complex $Q^\bu$ in the category $\sK$ can be found such that
the direct sum $P^\bu\oplus Q^\bu$ is a complex whose every term
$P^i\oplus Q^i$, $\,i\in\boZ$, is filtered by finitely presentable
objects (e.~g., one can choose $Q^\bu$ to be a suitable complex
with zero differential).
 If we manage to prove that every morphism of complexes $P^\bu\oplus
Q^\bu\rarrow J^\bu$ is homotopic to zero, it will follow that
every morphism of complexes $P^\bu\rarrow J^\bu$ is homotopic
to zero as well.
 Redenoting $P^\bu\oplus Q^\bu$ by $P^\bu$, we have shown that it
can be assumed, without loss of generality, that every object $P^i$,
\,$i\in\boZ$, is filtered by finitely presentable objects.

 Thus we now suppose that $P^i\in\Fil(\sK_\fp)$.
 By the definition of fp\+projective objects, we have
$\Ext^1_\sK(P^i,J^j)=0$ for all $i$, $j\in\boZ$.
 Therefore, Lemma~\ref{complexes-homotopy-hom-ext-lemma} provides
an isomorphism of abelian groups
$$
 \Hom_{\Hot(\sK)}(P^\bu,J^\bu)\simeq\Ext^1_{\sC(\sK)}(P^\bu,J^\bu[-1]).
$$

 By Lemma~\ref{complexes-deconstructed} (for $\sS=\sK_\fp$),
the complex $P^\bu$ is filtered by (bounded below) complexes of
finitely presentable objects.
 In view of Lemma~\ref{eklof-lemma}, it suffices to show that
$\Ext^1_{\sC(\sK)}(S^\bu,J^\bu[-1])=0$ for any complex
$S^\bu\in\sC(\sK_\fp)$.
 The same isomorphism from
Lemma~\ref{complexes-homotopy-hom-ext-lemma} tells that
$$
 \Ext^1_{\sC(\sK)}(S^\bu,J^\bu[-1])\simeq
 \Hom_{\Hot(\sK)}(S^\bu,J^\bu).
$$
 Essentially, we have reduced the assertion of the proposition to
the case of a complex of finitely presentable objects in place of
a complex of fp\+projective objects.

 Now we observe that, by the definition, all finitely presentable
objects are pure-projective.
 On the other hand, by Lemma~\ref{absolutely-pure}, any acyclic complex
in $\sK$ with fp\+injective objects of cocycles is pure acyclic.
 It remains to apply Theorem~\ref{stovicek-pure-neeman} to $P^\bu=S^\bu$
and $X^\bu=J^\bu$.
\end{proof}

\begin{proof}[Proof of
Theorem~\ref{lfp-category-cycles-in-fp-projective-thm}]
 Let $Y\in\sK_\inj^\sfp$ be a strongly fp\+injective object.
 Given an integer $n\in\boZ$, we have to show that
$\Ext^1_\sK(Z^n,Y)=0$.
 In the short exact sequence
$$
 0\lrarrow Z^{n-1}\lrarrow P^{n-1}\lrarrow Z^n\lrarrow0
$$
we have $\Ext^1_\sK(P^{n-1},Y)=0$; so it suffices to prove that the map
$\Hom_\sK(P^{n-1},Y)\rarrow\Hom_\sK(Z^{n-1},Y)$ is surjective.
 For this purpose, we will show that the complex of abelian groups
$\Hom_\sK(P^\bu,Y)$ is acyclic.

 Let $0\rarrow Y\rarrow I^0\rarrow I^1\rarrow I^2\rarrow\dotsb$ be
an injective resolution of the object $Y$ in the category~$\sK$.
 Denote by $J^\bu$ the acyclic complex $Y\rarrow I^\bu$.
 Since the object $Y$ is strongly fp\+injective and the cotorsion pair
$(\sK_\proj^\wfp,\sK_\inj^\sfp)$ is hereditary in $\sK$, all
the objects of cocycles of the complex $J^\bu$ are (strongly)
fp\+injective.
 All the terms of the complex $J^\bu$ are also obviously (strongly)
fp\+injective.

 Recall that $P^\bu$ is a complex of fp\+projective objects.
 By Theorem~\ref{lfp-category-hot-hom-fp-proj-fp-inj-thm}, it follows
that all morphisms of complexes $P^\bu\rarrow J^\bu[n]$, \,$n\in\boZ$,
are homotopic to zero.
 In other words, this means that the complex of abelian groups
$\Hom_\sK(P^\bu,J^\bu)$ is acyclic.

 On the other hand, for any acyclic complex $X^\bu$ and any bounded
below complex of injective objects $I^\bu$ in an abelian category $\sK$,
the complex of abelian groups $\Hom_\sK(X^\bu,I^\bu)$ is well-known
to be acyclic.
 In the situation at hand, we observe that the complex
$\Hom_\sK(P^\bu,I^\bu)$ is acyclic.

 Since both the complexes of abelian groups $\Hom_\sK(P^\bu,I^\bu)$ and
$\Hom_\sK(P^\bu,J^\bu)$ are acyclic, and the complex $J^\bu$ is simply
the augmented coresolution $J^\bu=(Y\to I^\bu)$, we can finally conclude
that the complex $\Hom_\sK(P^\bu,Y)$ is acyclic, as desired.
\end{proof}

\begin{cor} \label{lfp-category-weakly-fp-periodicity-cor}
 Let\/ $\sK$ be a locally finitely presentable abelian category
and\/ $\FpProj=\sK_\proj^\fp$ be the class of all fp\+projective
objects in\/~$\sK$.
 Then any\/ $\FpProj$\+periodic object in\/ $\sK$ is weakly
fp\+projective.
\end{cor}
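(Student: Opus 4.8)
The plan is to deduce this corollary directly from Theorem~\ref{lfp-category-cycles-in-fp-projective-thm}, using the standard passage between periodicity statements and statements about objects of cocycles in acyclic complexes recalled in Section~\ref{introd-periodic-defined-subsecn}.

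First I would take an $\FpProj$\+periodic object $M\in\sK$, witnessed by a short exact sequence $0\rarrow M\overset{\iota}\rarrow A\overset{\pi}\rarrow M\rarrow0$ in $\sK$ with $A\in\sK_\proj^\fp$. Splicing countably many copies of this sequence together, I form the $\boZ$\+indexed complex $P^\bu$ with $P^n=A$ for every $n\in\boZ$ and differential $d^n=\iota\pi\:A\rarrow A$ (the composition $A\overset{\pi}\rarrow M\overset{\iota}\rarrow A$). Every term of $P^\bu$ is fp\+projective by construction. The complex is acyclic: since $\pi$ is an epimorphism and $\iota$ a monomorphism, the image of $d^{n-1}$ equals $\iota(M)=\ker\pi=\ker d^n$, so the object of cocycles $Z^n$ of $P^\bu$ is isomorphic to $M$ for every $n$. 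Then Theorem~\ref{lfp-category-cycles-in-fp-projective-thm} applies to the acyclic complex of fp\+projective objects $P^\bu$ and yields $Z^n\in\sK_\proj^\wfp$ for all $n$; as $Z^n\cong M$, this gives $M\in\sK_\proj^\wfp$, which is the assertion.

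I do not expect any genuine obstacle here: all the substance has already been absorbed into Theorem~\ref{lfp-category-cycles-in-fp-projective-thm} (which in turn rests on Theorem~\ref{lfp-category-hot-hom-fp-proj-fp-inj-thm} and, ultimately, on \v St\!'ov\'\i\v cek's pure\+projective periodicity, Theorem~\ref{stovicek-pure-neeman}). The only point requiring a moment's care is the verification that the spliced complex is acyclic with all cocycle objects equal to $M$, which is immediate from the exactness of the chosen short exact sequence. It is worth remarking that, unlike in the general equivalence between periodicity and cocycle statements quoted in the introduction, no closure of $\FpProj$ under countable coproducts is needed here, since a single short exact sequence already produces the required acyclic complex.
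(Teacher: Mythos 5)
Your proposal is correct and coincides with the paper's own argument: splice the given short exact sequence into a doubly unbounded acyclic complex with terms $A$ and differential $\iota\pi$, observe that all objects of cocycles are isomorphic to $M$, and apply Theorem~\ref{lfp-category-cycles-in-fp-projective-thm}. Your closing remark that no closure under countable coproducts is needed for this direction is also consistent with the paper's treatment.
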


\begin{proof}
 We refer to Section~\ref{introd-periodic-defined-subsecn} of
the introduction for the definition of an $\sA$\+periodic object.
 Given a short exact sequence $0\rarrow M\overset k\rarrow A
\overset q\rarrow M\rarrow0$ in $\sK$ with $A\in\sK_\proj^\fp$,
all one needs to do is to splice up a doubly unbounded sequence
of copies of the given short exact sequence, obtaining
an acyclic complex
$$
 \dotsb\lrarrow A\overset{kq}\lrarrow A
 \overset{kq}\lrarrow A\lrarrow\dotsb
$$
and apply Theorem~\ref{lfp-category-cycles-in-fp-projective-thm}
to the resulting complex.
\end{proof}

\begin{cor} \label{lfp-category-cocycles-fp-proj-iff-loc-coherent-cor}
 Let\/ $\sK$ be a locally finitely presentable abelian category.
 Then the following conditions are equivalent:
\begin{enumerate}
\item in any acyclic complex in\/ $\sK$ with fp\+projective terms,
the objects of cocycles are fp\+projective;
\item any\/ $\FpProj$\+periodic object in $\sK$ is fp\+projective;
\item the cotorsion pair $(\sK_\proj^\fp$, $\sK_\inj^\fp)$ is
hereditary in\/~$\sK$;
\item the category\/ $\sK$ is locally coherent.
\end{enumerate}
\end{cor}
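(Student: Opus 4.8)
The plan is to prove the four conditions equivalent via the cycle $(4)\Rightarrow(1)\Rightarrow(3)\Rightarrow(4)$, supplemented by the separate equivalence $(1)\Leftrightarrow(2)$. This makes $(1)$, $(3)$, $(4)$ all equivalent and attaches $(2)$ through $(1)$; almost all of the content is then carried by Theorem~\ref{lfp-category-cycles-in-fp-projective-thm} and Corollary~\ref{two-fp-cotors-pairs-agree-equiv-loc-coherent}, and only one implication requires a genuine (but short) argument.

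First I would dispose of the three ``formal'' steps. For $(4)\Rightarrow(1)$: in a locally coherent category Corollary~\ref{two-fp-cotors-pairs-agree-equiv-loc-coherent} gives $\sK_\proj^\wfp=\sK_\proj^\fp$, so the weakly fp\+projective objects of cocycles produced by Theorem~\ref{lfp-category-cycles-in-fp-projective-thm} for an acyclic complex of fp\+projective objects are in fact fp\+projective. The equivalence $(3)\Leftrightarrow(4)$ is nothing but the equivalence of conditions~(1) and~(4) of Corollary~\ref{two-fp-cotors-pairs-agree-equiv-loc-coherent}, so nothing new is needed there. For $(1)\Leftrightarrow(2)$ I would invoke the elementary observation recalled in the introduction (from~\cite{CH,EFI}): since $\sK$ is Grothendieck it has exact countable coproducts, and the class $\sK_\proj^\fp$ is closed under coproducts (being the left-hand class of a cotorsion pair) and under direct summands, so ``every $\FpProj$\+periodic object lies in $\sK_\proj^\fp$'' is equivalent to ``the objects of cocycles of any acyclic complex with terms in $\sK_\proj^\fp$ lie in $\sK_\proj^\fp$''.

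The one step with real content is $(1)\Rightarrow(3)$. Here the plan is to use the standard characterization (Section~\ref{cotorsion-pairs-secn}) that a cotorsion pair with generating left class and cogenerating right class is hereditary exactly when its left class is closed under kernels of epimorphisms; both side conditions are automatic here, since $\sK$ is Grothendieck (so $\sK_\inj^\fp$ is cogenerating) and $\sK_\proj^\fp$ is generating. So, starting from an arbitrary short exact sequence $0\to A\to P^0\to P^1\to 0$ with $P^0,P^1\in\sK_\proj^\fp$, I would use that $\sK_\proj^\fp$ is generating to choose a left resolution $\dotsb\to P^{-2}\to P^{-1}\to A\to 0$ with each $P^{-n}\in\sK_\proj^\fp$, then splice it with the given sequence (taking the composite $P^{-1}\twoheadrightarrow A\hookrightarrow P^0$ as the differential) and pad with zeros to the right. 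This yields a bounded-above acyclic complex of fp\+projective objects whose degree-zero object of cocycles is precisely $A$; condition~(1) then forces $A\in\sK_\proj^\fp$, as needed.

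The main ``obstacle'' is really a matter of choosing the right route: the implication $(2)\Rightarrow(4)$ looks as though it should require exhibiting, over an arbitrary non-coherent $\sK$, an explicit non-fp\+projective $\FpProj$\+periodic object, which would be painful. Going around the cycle above avoids this entirely --- $(2)\Rightarrow(1)$ is the nontrivial half of the cited periodicity/cocycles equivalence, $(1)\Rightarrow(3)$ is the short resolution argument, and $(3)\Rightarrow(4)$ is already in hand --- so no such construction is needed, and everything reduces to bookkeeping with the machinery developed in the previous sections.
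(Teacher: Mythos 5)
Your proposal is correct and follows essentially the same route as the paper: the same citation of the periodicity/cocycles equivalence for $(1)\Leftrightarrow(2)$, the same appeal to Corollary~\ref{two-fp-cotors-pairs-agree-equiv-loc-coherent} for $(3)\Leftrightarrow(4)$ and for deducing $(1)$ from Theorem~\ref{lfp-category-cycles-in-fp-projective-thm}, and the same splice-a-resolution argument for $(1)\Rightarrow(3)$. The only cosmetic difference is that you prove $(4)\Rightarrow(1)$ where the paper states $(3)\Rightarrow(1)$, which is immaterial once $(3)\Leftrightarrow(4)$ is in hand.
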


\begin{proof}
 (1)\,$\Longleftrightarrow$\,(2) is
essentially~\cite[Proposition~1]{EFI} (cf.\ the discussion in
Section~\ref{introd-module-periodicity-theorems-subsecn} of
the introduction).
 The implication (1)\,$\Longrightarrow$\,(2) was already explained
in the proof of Corollary~\ref{lfp-category-weakly-fp-periodicity-cor}.
 To prove (2)\,$\Longrightarrow$\,(1), all one needs to do it so
chop up a given acyclic complex of fp\+projectives into short
exact sequence pieces and apply~(2) to the coproduct of the resulting
short exact sequences.

 (3)\,$\Longleftrightarrow$\,(4) is
Corollary~\ref{two-fp-cotors-pairs-agree-equiv-loc-coherent}\,%
(1)\,$\Leftrightarrow$\,(4).

 The implication (3)\,$\Longrightarrow$\,(1) holds
by Theorem~\ref{lfp-category-cycles-in-fp-projective-thm} and
Corollary~\ref{two-fp-cotors-pairs-agree-equiv-loc-coherent}\,%
(1)\,$\Rightarrow$\,(2).

 To prove (1)\,$\Longrightarrow$\,(3), consider a short exact sequence
$0\rarrow Q\rarrow S\rarrow T\rarrow0$ in $\sK$ with fp\+projective
objects $S$ and $T$.
 Put $P_1=S$ and $P_0=T$.
 
 Any object of $\sK$ is an epimorphic image of an fp\+projective object
(in fact, any object of $\sK$ is even a pure epimorphic image of
a pure-projective object).
 So the object $Q$ has an fp\+projective resolution
$\dotsb\rarrow P_4\rarrow P_3\rarrow P_2\rarrow Q\rarrow0$.
 Then $\dotsb\rarrow P_3\rarrow P_2\rarrow P_1\rarrow P_0\rarrow0$ is
an acyclic complex $P_\bu$ of fp\+projective objects in~$\sK$.
 Among the objects of cocycles of the acyclic complex $P_\bu$,
there is the object~$Q$.
 Thus (1)~implies that $Q$ is fp\+projective.
\end{proof}

 For reader's convenience, let us explicitly formulate our results in
the case of module categories.

\begin{cor} \label{modules-cycles-in-fp-projective-cor}
 Let $R$ be an associative ring, and let $P^\bu$ be an acyclic complex
of $R$\+modules whose terms $P^n$ are fp\+projective $R$\+modules.
 Denote by $Z^n\in\Modr R$ the modules of cocycles of the acyclic
complex~$P^\bu$.
 Then all the $R$\+modules $Z^n$ are weakly fp\+projective.
\end{cor}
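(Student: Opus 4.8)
The plan is to obtain this corollary as the special case of Theorem~\ref{lfp-category-cycles-in-fp-projective-thm} in which the locally finitely presentable abelian category $\sK$ is taken to be the category of right $R$\+modules $\Modr R$. As recalled in Section~\ref{loc-fin-pres-secn}, the category $\Modr R$ is locally finitely presentable, and its finitely presentable objects are precisely the finitely presented $R$\+modules.

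Granting this, the only thing I would need to check is that the category\+theoretic definitions specialize to the module\+theoretic ones. An object $J\in\Modr R$ satisfies $\Ext^1_\sK(T,J)=0$ for all finitely presentable $T\in\sK$ if and only if $J$ is an fp\+injective module; hence the class $\sK_\inj^\fp$ coincides with the class of fp\+injective $R$\+modules, and, dually, $\sK_\proj^\fp$ with the class of fp\+projective $R$\+modules. The same observation applied with the higher $\Ext$ groups identifies $\sK_\inj^\sfp$ and $\sK_\proj^\wfp$ with the classes of strongly fp\+injective and weakly fp\+projective $R$\+modules, respectively. All of this is immediate from the definitions.

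It then remains to invoke Theorem~\ref{lfp-category-cycles-in-fp-projective-thm} for $\sK=\Modr R$: given an acyclic complex $P^\bu$ of fp\+projective $R$\+modules, the objects of cocycles $Z^n$ are weakly fp\+projective objects of $\sK$, i.~e., weakly fp\+projective $R$\+modules, which is exactly the assertion of the corollary. Since the statement is a pure matter of specialization, there is no genuine obstacle; the one point demanding (entirely routine) attention is the dictionary between the categorical and the module\+theoretic terminology set up in the previous paragraph.
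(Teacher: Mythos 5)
Your proposal is correct and coincides with the paper's own proof, which likewise obtains this corollary by applying Theorem~\ref{lfp-category-cycles-in-fp-projective-thm} to the module category $\sK=\Modr R$. The dictionary between the categorical and module-theoretic notions of fp\+injectivity and (weak) fp\+projectivity that you spell out is exactly the routine specialization the paper takes for granted.
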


\begin{cor} \label{modules-hot-hom-fp-proj-fp-inj-cor}
 Let $R$ be an associative ring, let $P^\bu$ be a complex of
fp\+projective right $R$\+modules, and let $J^\bu$ be an acyclic complex
of fp\+injective right $R$\+modules with fp\+injective modules of
cocycles.
 Then any morphism of complexes $P^\bu\rarrow J^\bu$ is homotopic
to zero. 
\end{cor}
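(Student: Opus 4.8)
The plan is to read off this statement as the special case $\sK=\Modr R$ of Theorem~\ref{lfp-category-hot-hom-fp-proj-fp-inj-thm}. For any associative ring $R$ the category $\Modr R$ of right $R$\+modules is locally finitely presentable, its finitely presentable objects are exactly the finitely presented $R$\+modules, and the module-theoretic notions of fp\+projective and fp\+injective module coincide with the categorical ones introduced earlier in the paper. So the hypotheses of Theorem~\ref{lfp-category-hot-hom-fp-proj-fp-inj-thm} hold verbatim for $P^\bu$ and $J^\bu$, and its conclusion is precisely the assertion to be proved. There is nothing else to do.

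For a reader who wants the argument spelled out in module-theoretic terms, I would recall the chain of reductions behind Theorem~\ref{lfp-category-hot-hom-fp-proj-fp-inj-thm}. First, since every fp\+projective module is a direct summand of a module filtered by finitely presented modules, and since adjoining to $P^\bu$ a complex with zero differential does not change whether a given morphism $P^\bu\rarrow J^\bu$ is null-homotopic, one may assume each $P^i$ is itself filtered by finitely presented modules. As $P^i$ is fp\+projective and every $J^j$ is fp\+injective, one has $\Ext^1_R(P^i,J^j)=0$ for all $i$,~$j$, so Lemma~\ref{complexes-homotopy-hom-ext-lemma} identifies $\Hom$ in the homotopy category of complexes with $\Ext^1$ in the abelian category of complexes: $\Hom_{\Hot(\Modr R)}(P^\bu,J^\bu)\simeq\Ext^1_{\sC(\Modr R)}(P^\bu,J^\bu[-1])$, and likewise with $P^\bu$ replaced by any complex of finitely presented modules.

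Next I would apply the Hill lemma in the form of Lemma~\ref{complexes-deconstructed}: viewed inside $\sC(\Modr R)$, the complex $P^\bu$ is filtered by bounded-below complexes of finitely presented modules. By the Eklof lemma (Lemma~\ref{eklof-lemma}) it then suffices to show $\Ext^1_{\sC(\Modr R)}(S^\bu,J^\bu[-1])=0$, i.e.\ $\Hom_{\Hot(\Modr R)}(S^\bu,J^\bu)=0$, for every complex $S^\bu$ of finitely presented modules. Finally, finitely presented modules are pure\+projective, and by Lemma~\ref{absolutely-pure} an acyclic complex of fp\+injective modules with fp\+injective modules of cocycles is pure acyclic; hence \v St\!'ov\'\i\v cek's Theorem~\ref{stovicek-pure-neeman} --- itself obtained from Neeman's Theorem~\ref{neeman-theorem} applied to the ``ring with many objects'' equivalent to the category of finitely presented $R$\+modules --- yields the desired vanishing. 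The one genuinely substantial ingredient, and the step I would flag as the crux, is the Hill-lemma reduction of Lemma~\ref{complexes-deconstructed}, which is what makes the passage from an arbitrary complex of (suitably filtered) fp\+projective modules to complexes of finitely presented modules possible, and there pure-projective periodicity applies.
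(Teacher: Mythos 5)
Your proposal is correct and coincides with the paper's argument: the corollary is obtained exactly by specializing Theorem~\ref{lfp-category-hot-hom-fp-proj-fp-inj-thm} to $\sK=\Modr R$, and your spelled-out chain of reductions (pass to $\sK_\fp$-filtered terms, Lemma~\ref{complexes-homotopy-hom-ext-lemma}, Lemma~\ref{complexes-deconstructed} plus the Eklof lemma, then Lemma~\ref{absolutely-pure} and Theorem~\ref{stovicek-pure-neeman}) is precisely the paper's proof of that theorem. Nothing is missing.
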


\begin{cor} \label{modules-weakly-fp-periodicity-cor}
 Let $R$ be an associative ring, and let\/ $\FpProj$ denote the class
of all fp\+projective $R$\+modules.
 Then any\/ $\FpProj$\+periodic $R$\+module is weakly fp\+projective.
\end{cor}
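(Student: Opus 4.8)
The plan is to obtain this as the module-theoretic specialization of the category-theoretic Corollary~\ref{lfp-category-weakly-fp-periodicity-cor}, taking $\sK=\Modr R$. First I would recall that for any associative ring $R$ the category of right $R$\+modules $\Modr R$ is a locally finitely presentable abelian category (indeed Grothendieck), with the finitely presentable objects being precisely the finitely presented modules in the usual sense. Consequently the four classes appearing implicitly in the statement --- fp\+injective, fp\+projective, strongly fp\+injective, and weakly fp\+projective $R$\+modules --- coincide with the classes obtained by applying the definitions of Section~\ref{loc-fin-pres-secn} to the category $\sK=\Modr R$. Alternatively, and even more directly, one may view $\Modr R$ as the instance $\Modr\cR$ of the setup of Section~\ref{loc-fin-pres-secn} in which $\cR$ is the one\+object preadditive category with endomorphism ring $R$; then the present statement, like Corollaries~\ref{modules-cycles-in-fp-projective-cor}\+-\ref{modules-hot-hom-fp-proj-fp-inj-cor}, is a literal special case of its categorical counterpart.

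Granting this identification, let $M$ be an $\FpProj$\+periodic $R$\+module. By the definition recalled in Section~\ref{introd-periodic-defined-subsecn}, there is a short exact sequence $0\rarrow M\overset k\rarrow A\overset q\rarrow M\rarrow0$ in $\Modr R$ with $A$ fp\+projective. Splicing together a doubly unbounded string of copies of this sequence produces an acyclic complex
$$
 \dotsb\lrarrow A\overset{kq}\lrarrow A\overset{kq}\lrarrow A\lrarrow\dotsb
$$
every term of which is the fp\+projective module $A$, and among whose modules of cocycles the module $M$ occurs (as the image of each copy of $k$, equivalently the kernel of each copy of $q$, since $k$ is a monomorphism). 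Applying Corollary~\ref{modules-cycles-in-fp-projective-cor} --- equivalently, Theorem~\ref{lfp-category-cycles-in-fp-projective-thm} in the case $\sK=\Modr R$ --- to this complex shows that all of its cocycle modules, and in particular $M$, are weakly fp\+projective, which is what we wanted.

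I do not expect any genuine obstacle here: the whole weight of the argument has already been discharged in Theorem~\ref{lfp-category-cycles-in-fp-projective-thm} (which rests in turn on Theorem~\ref{lfp-category-hot-hom-fp-proj-fp-inj-thm}, on \v St\!'ov\'\i\v cek's pure version of Neeman's theorem stated as Theorem~\ref{stovicek-pure-neeman}, and ultimately on Neeman's Theorem~\ref{neeman-theorem}), and that theorem is proved for arbitrary locally finitely presentable abelian categories. The only points worth a word are the verification that module\+theoretic fp\+(in)jectivity and the two fp\+projectivity notions match the categorical ones --- routine, as indicated above --- and the observation that the conclusion is \emph{weakly} fp\+projective rather than fp\+projective: by Corollary~\ref{lfp-category-cocycles-fp-proj-iff-loc-coherent-cor} the stronger conclusion would force $R$ to be right coherent, so over a general ring weak fp\+projectivity is the best one can assert (and, as announced in the abstract, genuine counterexamples exist over non-coherent rings).
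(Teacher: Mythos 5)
Your proposal is correct and follows the paper's own route exactly: the paper proves Corollary~\ref{modules-weakly-fp-periodicity-cor} by applying Corollary~\ref{lfp-category-weakly-fp-periodicity-cor} to $\sK=\Modr R$, and that categorical corollary is itself proved by the same splicing of the periodicity sequence into a doubly unbounded acyclic complex of copies of $A$ followed by an appeal to Theorem~\ref{lfp-category-cycles-in-fp-projective-thm}. No gaps; the identification of the module-theoretic and categorical notions is indeed routine, as you say.
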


\begin{proof}[Proof of
Corollaries~\ref{modules-cycles-in-fp-projective-cor}\+-%
\ref{modules-weakly-fp-periodicity-cor}]
 Apply Theorems~\ref{lfp-category-cycles-in-fp-projective-thm}\+-%
\ref{lfp-category-hot-hom-fp-proj-fp-inj-thm} and
Corollary~\ref{lfp-category-weakly-fp-periodicity-cor} to the module 
category $\sK=\Modr R$.
\end{proof}

\begin{cor} \label{modules-cocycles-fp-proj-iff-coherent-cor}
 For any associative ring $R$, the following conditions are equivalent:
\begin{enumerate}
\item in any acyclic complex of right $R$\+modules with fp\+projective
terms, the modules of cocycles are fp\+projective;
\item any\/ $\FpProj$\+periodic right $R$\+module is fp\+projective;
\item the ring $R$ is right coherent.
\end{enumerate}
\end{cor}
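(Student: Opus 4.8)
The plan is to obtain this statement as the module-category specialization of Corollary~\ref{lfp-category-cocycles-fp-proj-iff-loc-coherent-cor}. Applying that corollary to the locally finitely presentable abelian category $\sK=\Modr R$, one immediately gets the equivalence of conditions~(1) and~(2) of the present statement with the assertion that the category $\Modr R$ is locally coherent (condition~(4) there), as well as with the hereditariness of the cotorsion pair $(\sK_\proj^\fp,\sK_\inj^\fp)$. Hence the only thing left to check is that ``$\Modr R$ is locally coherent'' matches up with ``$R$ is right coherent''.

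For this I would first recall that the finitely presentable objects of $\Modr R$ are exactly the finitely presented right $R$\+modules, so that, by the definition recalled in Section~\ref{loc-fin-pres-secn}, the category $\Modr R$ is locally coherent if and only if the class of finitely presented right $R$\+modules is closed under the kernels of epimorphisms --- equivalently, under the kernels of arbitrary morphisms between finitely presented modules; equivalently, every finitely generated submodule of a finitely presented module is finitely presented. It then remains to observe that these are precisely the standard equivalent forms of the statement that $R$ is a right coherent ring, e.g., that every finitely generated right ideal of $R$ is finitely presented; I would cite a textbook reference such as \cite{Gla} for this classical equivalence, and note that no further argument is needed.

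Since everything reduces to results already proved, there is essentially no obstacle to overcome here: the substance of the corollary lies in Theorem~\ref{lfp-category-cycles-in-fp-projective-thm} and Corollary~\ref{two-fp-cotors-pairs-agree-equiv-loc-coherent}. If one preferred a self-contained argument avoiding the general categorical statement, one could proceed directly. The equivalence (1)\,$\Longleftrightarrow$\,(2) follows from the splicing trick used in the proof of Corollary~\ref{modules-weakly-fp-periodicity-cor} --- chop an acyclic complex of fp\+projectives into short exact sequences and take their coproduct --- cf.\ \cite{EFI}. The implication (3)\,$\Longrightarrow$\,(1) follows from Corollary~\ref{modules-cycles-in-fp-projective-cor} together with the fact (Corollary~\ref{two-fp-cotors-pairs-agree-equiv-loc-coherent}) that over a right coherent ring every weakly fp\+projective module is fp\+projective. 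And (1)\,$\Longrightarrow$\,(3) follows by taking an fp\+projective resolution of the kernel $Q$ of an epimorphism $S\rarrow T$ between finitely presented right $R$\+modules, splicing it into an acyclic complex of fp\+projectives having $Q$ among its objects of cocycles, concluding that $Q$ is fp\+projective, and finally invoking Corollary~\ref{fp-projective-fin-generated-cor}(a) to see that the finitely generated fp\+projective module $Q$ is finitely presented --- so that $R$ is right coherent.
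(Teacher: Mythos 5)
Your proposal is correct and takes essentially the same route as the paper: the paper's own proof simply applies Corollary~\ref{lfp-category-cocycles-fp-proj-iff-loc-coherent-cor} to $\sK=\Modr R$ (and notes that the nontrivial implications were also obtained by \v Saroch and \v St\!'ov\'\i\v cek). Your additional remarks on matching ``locally coherent'' with ``right coherent'' and the optional self-contained argument faithfully reproduce the substance of the general corollary's proof, so there is nothing to object to.
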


\begin{proof}
 The nontrivial implications (3)\,$\Longrightarrow$\,(2) and
(3)\,$\Longrightarrow$\,(1) are due to \v Saroch and
\v St\!'ov\'\i\v cek~\cite[Example~4.3]{SarSt}.
 The whole corollary can be also obtained by applying our
Corollary~\ref{lfp-category-cocycles-fp-proj-iff-loc-coherent-cor}
to $\sK=\Modr R$.
\end{proof}

 For a simple counterexample of a non-fp-projective $\Proj$\+periodic
module over a noncoherent ring, see
Example~\ref{noncoherent-counterex} below.

 Notice that the assertion of
Corollary~\ref{modules-hot-hom-fp-proj-fp-inj-cor} for right coherent
rings $R$ is also covered by the discussion of \v Saroch and
\v St\!'ov\'\i\v cek; see~\cite[second paragraph of Example~4.3]{SarSt}.
 Our approach provides a generalization to arbitrary rings~$R$.

 Over an arbitrary ring $R$, the particular case of
Corollary~\ref{modules-cycles-in-fp-projective-cor} for acyclic
complexes of \emph{pure-projective} modules $P^\bu$ was obtained
by Emmanouil and Kaperonis in~\cite[Lemma~4.5(iii)
or Corollary~4.9(i)]{EK}.
 The particular case of
Corollary~\ref{modules-hot-hom-fp-proj-fp-inj-cor} for complexes of
\emph{pure-projective} modules $P^\bu$ and acyclic complexes of
\emph{strongly} fp\+injective modules $J^\bu$ with \emph{strongly}
fp\+injective modules of cocycles can be found
in~\cite[Lemma~4.5(ii)]{EK}.

\begin{rem}
 The results of this section admit a rather straightforward extension
to higher cardinalities.
 Given a regular cardinal~$\kappa$ and a locally $\kappa$\+presentable
Grothendieck category $\sK$, one defines \emph{$\kappa$\+p\+injective},
\emph{$\kappa$\+p\+projective}, \emph{strongly
$\kappa$\+p\+in\-jec\-tive}, and \emph{weakly $\kappa$\+p\+projective}
objects similarly to the definitions in Section~\ref{loc-fin-pres-secn},
using $\kappa$\+presentable objects $T$ instead of finitely
presentable ones.

 Suitable analogues of Lemmas~\ref{absolutely-pure}
and~\ref{pure-exact-structure-as-flat-modules} hold in this context,
with purity replaced by $\kappa$\+purity and $\cR$ denoting a small
additive category equivalent to the full subcategory of
$\kappa$\+presentable objects in~$\sK$.
 There is only a set of isomorphism classes of $\kappa$\+presentable
objects in $\sK$ by~\cite[Remark~1.19]{AR}.
 The category of flat $\cR$\+modules should be replaced with its full
subcategory of $\kappa$\+flat $\cR$\+modules (in the sense
of~\cite[Section~6]{Pper} in the $\kappa$\+version of
Lemma~\ref{pure-exact-structure-as-flat-modules}.
 The point is that every object of $\sK$ is a $\kappa$\+filtered
direct limit of $\kappa$\+presentable objects
by~\cite[Proposition~7.15]{JL} or~\cite[Definition~1.17
and Theorem~1.20]{AR}.
 Furthermore, the $\kappa$\+analogues of the lemmas and corollaries
of Section~\ref{hill-lemma-secn} also hold.
 So the $\kappa$\+versions of
Theorems~\ref{lfp-category-cycles-in-fp-projective-thm}\+-%
\ref{stovicek-pure-neeman} and
Corollaries~\ref{lfp-category-weakly-fp-periodicity-cor}\+-%
\ref{lfp-category-cocycles-fp-proj-iff-loc-coherent-cor} can be deduced
similarly to the proofs above.

 Generalizing the results of this section to locally
$\kappa$\+presentable (\emph{not} necessarily Grothendieck) abelian
categories would be a harder task, as the Hill lemma and its corollaries
have been only proved for Grothendieck categories
in the paper~\cite{Sto0}.
\end{rem}

\Section{Application to Derived Categories}
\label{derived-categories-secn}

 The aim of this section is to show that, for a locally coherent
abelian category $\sK$, the (unbounded) derived category $\sD(\sK)$ is
equivalent to the derived category of the exact category of
fp\+projective objects in~$\sK$,
$$
 \sD(\sK_\proj^\fp)\simeq\sD(\sK).
$$
 Here the exact category structure on $\sK_\proj^\fp$ is inherited
from the abelian exact structure of the ambient abelian category $\sK$
(notice that the full subcategory of fp\+projective objects
$\sK_\proj^\fp$ is closed under extensions in~$\sK$).

 We start with a general lemma applicable to exact categories~$\sK$.
 Denote by $\sD(\sK)$ the derived category of an exact category~$\sK$.
 So $\sD(\sK)$ is the triangulated Verdier quotient category
$\sD(\sK)=\Hot(\sK)/\Ac(\sK)$, where $\Ac(\sK)\subset\Hot(\sK)$ is
the triangulated subcategory of acyclic complexes.

\begin{lem} \label{subcategory-localization-lemma}
 Let\/ $\sK$ be an idempotent-complete exact category and\/
$\sA\subset\sK$ be a full additive subcategory.
 Assume that for any complex $K^\bu$ in\/ $\sK$ there exists
a complex $A^\bu$ in\/ $\sA$ together with a morphism of complexes
$A^\bu\rarrow K^\bu$ which is a quasi-isomorphism of complexes
in\/~$\sK$.
 Then the inclusion of additive categories\/ $\sA\rarrow\sK$ induces
a triangulated equivalence of Verdier quotient categories
$$
 \frac{\Hot(\sA)}{\Hot(\sA)\cap\Ac(\sK)}\overset\simeq\lrarrow
 \frac{\Hot(\sK)}{\Ac(\sK)}=\sD(\sK).
$$
\end{lem}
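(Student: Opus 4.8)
The plan is to treat this as a standard instance of the principle that a suitable ``resolution'' functor induces an equivalence of Verdier quotients, and to check the inputs needed for the calculus of fractions. Write $F\colon\Hot(\sA)\to\Hot(\sK)$ for the functor induced by the inclusion $\sA\hookrightarrow\sK$. First I would record the formal preliminaries. Since $\sA$ is a \emph{full} additive subcategory of $\sK$, the functor $F$ is fully faithful, because the morphism groups in a homotopy category of complexes are computed term-wise from those of the underlying additive category. Because $\sK$ is idempotent-complete (hence weakly idempotent-complete), the class $\Ac(\sK)$ of acyclic complexes is a triangulated subcategory of $\Hot(\sK)$, so $\sD(\sK)=\Hot(\sK)/\Ac(\sK)$ is a legitimate Verdier quotient. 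Likewise $\Hot(\sA)\cap\Ac(\sK)$ is a triangulated subcategory of $\Hot(\sA)$: it is closed under shifts, and the cone of a morphism between two complexes with terms in $\sA$ again has terms in $\sA$ (as $\sA$ is closed under finite direct sums) and is acyclic in $\sK$ whenever both complexes are. So the left-hand quotient is defined, $F$ descends to the functor $\bar F$ in the statement, and it remains to prove that $\bar F$ is essentially surjective and fully faithful.

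Essential surjectivity is immediate from the hypothesis: for a complex $K^\bu$ in $\sK$ there is a complex $A^\bu$ in $\sA$ and a quasi-isomorphism $A^\bu\to K^\bu$, that is, a morphism with cone in $\Ac(\sK)$, which thus becomes an isomorphism in $\sD(\sK)$; hence $K^\bu\cong\bar F(A^\bu)$.

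For full faithfulness I would use the calculus of fractions, representing a morphism in a Verdier quotient by a roof whose backward leg has cone in the relevant subcategory. Given $A^\bu,B^\bu$ in $\sA$ and a morphism $A^\bu\to B^\bu$ of $\sD(\sK)$, write it as a roof $A^\bu\xleftarrow{s}L^\bu\xrightarrow{f}B^\bu$ with $L^\bu$ a complex in $\sK$ and $\cone(s)\in\Ac(\sK)$; applying the hypothesis to $L^\bu$ gives a quasi-isomorphism $q\colon G^\bu\to L^\bu$ with $G^\bu$ in $\sA$. Then $sq$ and $fq$ are morphisms in $\Hot(\sA)$ (fullness of $F$), $\cone(sq)$ is an acyclic complex with terms in $\sA$ (a composite of quasi-isomorphisms is a quasi-isomorphism), and precomposing the roof with $q$ produces the equivalent roof $A^\bu\xleftarrow{sq}G^\bu\xrightarrow{fq}B^\bu$, which lies entirely in $\Hot(\sA)$ and hence defines a preimage of the given morphism in the left-hand quotient; this gives fullness. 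For faithfulness, suppose a roof $A^\bu\xleftarrow{s}G^\bu\xrightarrow{f}B^\bu$, with $G^\bu$ and $\cone(s)$ having terms in $\sA$, represents a morphism of the left-hand quotient that maps to $0$ in $\sD(\sK)$. By the calculus of fractions there is a quasi-isomorphism $u\colon L^\bu\to G^\bu$ in $\sK$ with $fu=0$ in $\Hot(\sK)$; applying the hypothesis to $L^\bu$ and composing yields a quasi-isomorphism $w\colon H^\bu\to G^\bu$ with $H^\bu$ in $\sA$, a morphism of $\Hot(\sA)$ whose cone has terms in $\sA$ and for which $fw=0$ (faithfulness of $F$). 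Precomposing the original roof with $w$ exhibits it as $0$ already in the left-hand quotient, so $\bar F$ is faithful.

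There is no essential difficulty here; the whole argument is formal, and the care is entirely in the bookkeeping. One must confirm that the quasi-isomorphisms in $\sK$, and likewise those morphisms of $\Hot(\sA)$ whose cone lies in $\Hot(\sA)\cap\Ac(\sK)$, form saturated multiplicative systems, so that the roof calculus is available and precomposition by such a morphism returns an equivalent roof; and one must repeatedly invoke that the cone of a morphism between $\sA$-valued complexes is again $\sA$-valued, which is exactly what lets every resolution step be carried out inside $\Hot(\sA)$ rather than only in $\Hot(\sK)$. The idempotent-completeness hypothesis is used solely to ensure that $\Ac(\sK)$, and therefore $\Hot(\sA)\cap\Ac(\sK)$, is a triangulated subcategory, so that the Verdier quotients in the statement make sense.
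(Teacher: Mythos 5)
Your argument is correct: the paper does not prove this lemma itself but cites it as a special case of standard localization theory (\cite[Corollary~7.2.2 or Proposition~10.2.7(ii)]{KS} or \cite[Lemma~1.6(a)]{Pkoszul}), and your roof-calculus proof is essentially the standard argument underlying those references. You also correctly isolate the two nontrivial inputs — that idempotent-completeness of $\sK$ makes $\Ac(\sK)$ a triangulated subcategory of $\Hot(\sK)$ closed under homotopy equivalence (so that composites of quasi-isomorphisms are quasi-isomorphisms and the Verdier quotients are well defined), and that fullness of $\sA$ makes $\Hot(\sA)\rarrow\Hot(\sK)$ fully faithful with cones of $\sA$-valued morphisms again $\sA$-valued.
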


\begin{proof}
 This is a particular case of \cite[Corollary~7.2.2 or
Proposition~10.2.7(ii)]{KS} or~\cite[Lemma~1.6(a)]{Pkoszul}.
\end{proof}

 We start with the special case of a module category $\sK=\Modr R$
(for a right coherent ring~$R$).
 The following proposition is more general.

\begin{prop} \label{loc-pres-enough-proj-abelian-category-localization}
 Let\/ $\sK$ be a locally presentable abelian category with enough
projective objects, and let\/ $\sA\subset\sK$ be a full additive
subcategory containing all the projective objects of\/~$\sK$.
 Then the inclusion of additive categories\/ $\sA\rarrow\sK$ induces
a triangulated equivalence of Verdier quotient categories
$$
 \frac{\Hot(\sA)}{\Hot(\sA)\cap\Ac(\sK)}
 \overset\simeq\lrarrow\sD(\sK).
$$
\end{prop}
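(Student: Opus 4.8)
The plan is to deduce the statement from Lemma~\ref{subcategory-localization-lemma}, applied to $\sK$ equipped with its canonical (abelian) exact structure --- which makes $\sK$ an idempotent-complete exact category --- and to the full subcategory $\sA$. The only hypothesis to check is that every complex $K^\bu$ in $\sK$ receives a quasi-isomorphism $P^\bu\rarrow K^\bu$ from a complex $P^\bu$ whose terms lie in $\sA$; since $\sA$ contains all projective objects of $\sK$, it is enough to produce such a $P^\bu$ with all terms projective in $\sK$. In other words, I need to show that, in a locally presentable abelian category with enough projectives, every complex admits a ``DG-projective resolution''. I would establish this by running the Eklof--Trlifaj theorem in the abelian category of complexes $\sC(\sK)$, which is again locally presentable, and which has enough projective objects since $\sK$ does (coproducts of the contractible two-term complexes $P\overset{\id}{\rarrow}P$ with $P$ projective in $\sK$ surject, degreewise, onto an arbitrary complex).

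Concretely, fix a set $\mathcal P$ of projective objects of $\sK$ that generates $\sK$; such a set exists because $\sK$ has a set of generators and enough projectives. For $P\in\mathcal P$ and $n\in\boZ$, write $S^n(P)\in\sC(\sK)$ for the complex with $P$ in degree $n$ and zero elsewhere, and $D^n(P)$ for the contractible complex with $P$ in two consecutive degrees and identity differential; put $\sS=\{S^n(P)\mid P\in\mathcal P,\ n\in\boZ\}$, a set of objects of $\sC(\sK)$. The first computation is that $\sS^{\perp_1}=\Ac(\sK)$ inside $\sC(\sK)$, where $\Ac(\sK)$ denotes the class of acyclic complexes. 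This follows from the short exact sequences $0\rarrow S^{n+1}(P)\rarrow D^n(P)\rarrow S^n(P)\rarrow0$ in $\sC(\sK)$ together with the fact that $D^n(P)$ is a projective object of $\sC(\sK)$ whenever $P$ is projective in $\sK$ (because $\Hom_{\sC(\sK)}(D^n(P),X^\bu)\simeq\Hom_\sK(P,X^n)$): a short exercise with these sequences yields a natural isomorphism $\Ext^1_{\sC(\sK)}(S^n(P),X^\bu)\simeq\Hom_\sK(P,H^{n+1}(X^\bu))$, which vanishes for all $P\in\mathcal P$ and all $n\in\boZ$ exactly when $X^\bu$ is acyclic (here I use that $\mathcal P$ generates). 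By Theorem~\ref{eklof-trlifaj-theorem}, in the form valid for locally presentable abelian categories with enough projective objects, the cotorsion pair $\bigl({}^{\perp_1}\Ac(\sK),\,\Ac(\sK)\bigr)$ generated by $\sS$ in $\sC(\sK)$ is complete.

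The second, and more delicate, point is that every object of ${}^{\perp_1}\Ac(\sK)$ is a complex of projective objects of $\sK$. Indeed, if $X^\bu\in{}^{\perp_1}\Ac(\sK)$, then $\Ext^1_{\sC(\sK)}(X^\bu,D^n(Q))=0$ for every object $Q\in\sK$ and every $n\in\boZ$, since each $D^n(Q)$ is contractible, hence acyclic. Applying the exact functor $D^n(-)$ to an arbitrary short exact sequence $0\rarrow Q'\rarrow Q\rarrow Q''\rarrow0$ in $\sK$ and then $\Hom_{\sC(\sK)}(X^\bu,-)$, and using the natural isomorphism $\Hom_{\sC(\sK)}(X^\bu,D^n(Q))\simeq\Hom_\sK(X^{n+1},Q)$, one reads off that $\Hom_\sK(X^{n+1},-)$ carries epimorphisms to epimorphisms, i.e.\ that $X^{n+1}$ is projective; letting $n$ range over $\boZ$ gives the claim. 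Consequently, for any complex $K^\bu$ the special precover sequence $0\rarrow E^\bu\rarrow P^\bu\rarrow K^\bu\rarrow0$ attached to the complete cotorsion pair above has $P^\bu$ a complex of projective objects (hence a complex in $\sA$) and $E^\bu$ acyclic, so the epimorphism $P^\bu\rarrow K^\bu$ is a quasi-isomorphism. This verifies the hypothesis of Lemma~\ref{subcategory-localization-lemma}, which then yields the asserted triangulated equivalence. I expect this last point --- that the left-hand class of the cotorsion pair really consists of degreewise-projective complexes, so that the cotorsion-theoretic resolution lands in $\sC(\sA)$ rather than merely in $\sC(\sK)$ --- to be the main obstacle; everything else is formal bookkeeping with the $\Ext^1$-computations in $\sC(\sK)$ and the standard facts that $\sC(\sK)$ is locally presentable with enough projectives.
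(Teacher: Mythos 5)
Your proposal is correct, and its skeleton coincides with the paper's: both reduce to Lemma~\ref{subcategory-localization-lemma} by producing, for every complex $K^\bu$, a quasi-isomorphism onto $K^\bu$ from a complex of projective objects. The difference lies entirely in how that existence statement is established. The paper simply cites \cite[Corollary~6.7 or Corollary~7.4]{PS4} for homotopy projective (or contraderived-type) resolutions by complexes of projectives in a locally presentable abelian category with enough projectives. You instead unpack this into a self-contained small-object-style argument: you run the Eklof--Trlifaj theorem in $\sC(\sK)$ for the set $\sS=\{S^n(P)\}$ of stalk complexes on projective generators, identify $\sS^{\perp_1}$ with $\Ac(\sK)$ via the (correct) computation $\Ext^1_{\sC(\sK)}(S^n(P),X^\bu)\simeq\Hom_\sK(P,H^{n+1}(X^\bu))$, and then show that the left class consists of degreewise projective complexes by testing against the acyclic complexes $D^n(Q)$. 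This last verification is exactly the right move and is sound; note also that your resolutions are only special precovers with acyclic kernel rather than homotopy projective complexes, which is a weaker property but entirely sufficient for Lemma~\ref{subcategory-localization-lemma}, since only a quasi-isomorphism is needed. The one small point worth making explicit: in the locally presentable (non-Grothendieck) form of Theorem~\ref{eklof-trlifaj-theorem}, one must also check that the right-hand class $\Ac(\sK)$ is cogenerating in $\sC(\sK)$; this is immediate because every complex admits a degreewise split monomorphism into a contractible complex (a product of complexes $D^{n-1}(K^n)$), but it should be said. With that remark added, your argument is a complete and somewhat more elementary substitute for the paper's citation.
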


\begin{proof}
 Here the argument is that the assumption of
Lemma~\ref{subcategory-localization-lemma} can be satisfied by choosing
$A^\bu$ to be a suitable complex of projective objects in~$\sK$.
 There are even many ways to do so: e.~g., one can choose $A^\bu$ to be
a homotopy projective complex of projective objects, as there are
enough such complexes in any locally presentable abelian category with
enough projective objects~\cite[Corollary~6.7]{PS4}.
 Alternatively, choosing $A^\bu$ as an arbitrary complex of projectives,
one can make the cone of the morphism $A^\bu\rarrow K^\bu$ not just
an acyclic, but a \emph{contraacyclic complex in the sense of Becker},
which is a stronger property~\cite[Corollary~7.4]{PS4}.
\end{proof}

\begin{cor}
 Let $R$ be a right coherent ring, and let\/ $\Modrprojfp R$ denote
the full subcategory of fp\+projective modules in\/ $\Modr R$, endowed
with the inherited exact category structure.
 Then the inclusion of exact/abelian categories\/ $\Modrprojfp R
\rarrow\Modr R$ induces an equivalence of their unbounded derived
categories,
$$
 \sD(\Modrprojfp R)\overset\simeq\lrarrow\sD(\Modr R).
$$
\end{cor}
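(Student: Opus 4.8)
The plan is to deduce this corollary from Proposition~\ref{loc-pres-enough-proj-abelian-category-localization} together with the right coherent case of our fp\+projective periodicity theorem.

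First I would put $\sK=\Modr R$ and $\sA=\Modrprojfp R$. The category $\Modr R$ is locally presentable (in fact locally finitely presentable) and has enough projective objects, and every projective $R$\+module is fp\+projective, so $\sA$ contains all the projectives of~$\sK$. Hence Proposition~\ref{loc-pres-enough-proj-abelian-category-localization} applies and furnishes a triangulated equivalence
$$
 \frac{\Hot(\Modrprojfp R)}{\Hot(\Modrprojfp R)\cap\Ac(\Modr R)}
 \overset\simeq\lrarrow\sD(\Modr R),
$$
induced by the inclusion $\Modrprojfp R\rarrow\Modr R$. The remaining work is to recognize the Verdier quotient on the left as the derived category $\sD(\Modrprojfp R)=\Hot(\Modrprojfp R)/\Ac(\Modrprojfp R)$ of the exact category $\Modrprojfp R$; concretely, I must show that $\Hot(\Modrprojfp R)\cap\Ac(\Modr R)$ coincides with the subcategory $\Ac(\Modrprojfp R)$ of complexes acyclic with respect to the inherited exact structure. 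Unwinding that exact structure, a conflation in $\Modrprojfp R$ is a short exact sequence in $\Modr R$ all three of whose terms are fp\+projective, so a complex of fp\+projective modules is acyclic in the exact category $\Modrprojfp R$ precisely when it is acyclic in $\Modr R$ and all of its modules of cocycles are fp\+projective. This immediately gives the inclusion $\Ac(\Modrprojfp R)\subseteq\Hot(\Modrprojfp R)\cap\Ac(\Modr R)$.

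The main obstacle is the reverse inclusion, and this is exactly the step where right coherence of $R$ enters: I need to know that every acyclic complex of fp\+projective right $R$\+modules automatically has fp\+projective modules of cocycles. Over an arbitrary ring, Corollary~\ref{modules-cycles-in-fp-projective-cor} only yields weakly fp\+projective cocycles, but over a right coherent ring the classes of fp\+projective and weakly fp\+projective modules coincide (Corollary~\ref{two-fp-cotors-pairs-agree-equiv-loc-coherent}); alternatively, this is precisely the implication (3)\,$\Rightarrow$\,(1) of Corollary~\ref{modules-cocycles-fp-proj-iff-coherent-cor}. With the reverse inclusion established, the two subcategories agree. Finally, noting that $\Modrprojfp R$ is idempotent-complete (fp\+projective modules being closed under direct summands, as the left class of a cotorsion pair), the displayed equivalence is exactly the asserted $\sD(\Modrprojfp R)\overset\simeq\lrarrow\sD(\Modr R)$.
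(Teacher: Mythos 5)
Your proposal is correct and follows exactly the paper's intended argument: the paper's proof is literally ``compare Proposition~\ref{loc-pres-enough-proj-abelian-category-localization} with Corollary~\ref{modules-cycles-in-fp-projective-cor} or~\ref{modules-cocycles-fp-proj-iff-coherent-cor},'' and you have spelled out precisely that comparison, including the key use of right coherence to identify $\Hot(\Modrprojfp R)\cap\Ac(\Modr R)$ with the acyclic complexes of the inherited exact structure.
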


\begin{proof}
 Compare
Proposition~\ref{loc-pres-enough-proj-abelian-category-localization}
with Corollary~\ref{modules-cycles-in-fp-projective-cor}
or~\ref{modules-cocycles-fp-proj-iff-coherent-cor}.
\end{proof}

 Now we pass to the general case of a locally coherent category~$\sK$
(which need \emph{not} have enough projectives).
 Once again, the following proposition is even more general.

\begin{prop} \label{locally-fin-pres-abelian-fp-proj-localization}
 Let\/ $\sK$ be a locally finitely presentable abelian category and\/
$\sK_\proj^\fp\subset\sK$ be its full subcategory of fp\+projective
objects.
 Then the inclusion of additive categories\/ $\sK_\proj^\fp
\rarrow\sK$ induces a triangulated equivalence of Verdier quotient
categories
$$
 \frac{\Hot(\sK_\proj^\fp)}{\Hot(\sK_\proj^\fp)\cap\Ac(\sK)}
 \overset\simeq\lrarrow\sD(\sK).
$$
\end{prop}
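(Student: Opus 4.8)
The plan is to reduce everything to Lemma~\ref{subcategory-localization-lemma}, applied with the idempotent\+complete exact category taken to be $\sK$ itself (equipped with its abelian exact structure; idempotents split in any abelian category) and with the full additive subcategory $\sA=\sK_\proj^\fp\subset\sK$. The conclusion of that lemma is then verbatim the assertion of the proposition, so the entire task is to verify its hypothesis: \emph{for every complex $K^\bu$ in $\sK$ there exist a complex $A^\bu$ of fp\+projective objects and a quasi-isomorphism $A^\bu\rarrow K^\bu$ in $\sC(\sK)$}. In other words, one must exhibit enough complexes of fp\+projective objects to resolve an arbitrary complex.

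The first step is to observe that the class $\sC(\sK_\proj^\fp)$ of complexes of fp\+projective objects is deconstructible in the Grothendieck category $\sC(\sK)$. Indeed, by Lemma~\ref{complexes-deconstructed}, applied with the extension\+closed class $\sS=\sK_\fp$, every complex whose terms lie in $\Fil(\sK_\fp)$ is filtered by bounded below complexes of finitely presentable objects, of which there is only a set up to isomorphism; since $\sK_\proj^\fp=\Fil(\sK_\fp)^\oplus$, it follows that $\sC(\sK_\proj^\fp)$ is, up to direct summands, filtered by a set of objects of $\sC(\sK)$. By the Eklof--Trlifaj theorem (Theorem~\ref{eklof-trlifaj-theorem}) applied in $\sC(\sK)$, the pair $(\sC(\sK_\proj^\fp),\sW)$ with $\sW=\sC(\sK_\proj^\fp)^{\perp_1}$ is then a complete cotorsion pair in $\sC(\sK)$. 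Its special precover sequence for $K^\bu$ is a short exact sequence $0\rarrow W^\bu\rarrow A^\bu\rarrow K^\bu\rarrow0$ in $\sC(\sK)$ with $A^\bu\in\sC(\sK_\proj^\fp)$ and $W^\bu\in\sW$; and $A^\bu\rarrow K^\bu$ is a quasi-isomorphism if and only if $W^\bu$ is acyclic.

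The heart of the matter --- and the step I expect to be the main obstacle --- is therefore to show that every complex in $\sW$ is acyclic. Testing against disk complexes on finitely presentable objects already forces any $X^\bu\in\sW$ to have fp\+injective terms. Acyclicity comes from Theorem~\ref{lfp-category-hot-hom-fp-proj-fp-inj-thm}: combined with Lemma~\ref{complexes-homotopy-hom-ext-lemma} it shows that a complex of fp\+projective objects is $\Ext^1_{\sC(\sK)}$\+orthogonal to every acyclic complex of fp\+injective objects with fp\+injective objects of cocycles (such a complex, and its shift, being degreewise $\Ext^1$\+orthogonal to fp\+projectives), so that $\sW$ contains all such complexes; together with the fact --- again a reformulation of Theorem~\ref{lfp-category-hot-hom-fp-proj-fp-inj-thm} --- that $\sC(\sK_\proj^\fp)$ coincides with the class of dg\+fp\+projective complexes (in the sense of~\cite[Definition~3.3]{Gil}), this identifies $\sW$ as consisting of acyclic complexes (with fp\+injective objects of cocycles, in fact). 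In particular every object of $\sW$ is acyclic, which is exactly what is needed; note that this requires no local coherence assumption on $\sK$, since even though $(\sK_\proj^\fp,\sK_\inj^\fp)$ need not be hereditary, the right\+hand class of the induced cotorsion pair on $\sC(\sK)$ still consists of acyclic complexes.

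With the hypothesis of Lemma~\ref{subcategory-localization-lemma} verified, that lemma delivers the triangulated equivalence of the statement, and the proof is complete. (An alternative, more elementary route to the resolutions $A^\bu\rarrow K^\bu$ bypasses $\sC(\sK)$ altogether: write $K^\bu$ as the directed colimit of its brutal truncations $\sigma_{\le n}K^\bu$, resolve each bounded above complex $\sigma_{\le n}K^\bu$ by a functorially chosen complex of pure\+projective objects, pass to the colimit, and use the Eklof lemma (Lemma~\ref{eklof-lemma}) to see that the terms of the colimit are again fp\+projective and the exactness of directed colimits to see that one still has a quasi-isomorphism. This avoids the cotorsion\+pair bookkeeping at the cost of some care with the colimit and with the functoriality of the bounded above resolutions.)
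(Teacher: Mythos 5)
Your reduction to Lemma~\ref{subcategory-localization-lemma} is exactly the paper's first move, but from there you take a genuinely different route. The paper verifies the hypothesis of that lemma by producing a \emph{pure} quasi-isomorphism onto $K^\bu$ from a complex of pure-projective objects, transporting the problem along Lemma~\ref{pure-exact-structure-as-flat-modules} to the category of flat $\cR$\+modules and invoking Neeman's result that every complex of flats is quasi-isomorphic, within the exact category of flat modules, to a complex of projectives (with a Spaltenstein-style construction offered as an alternative). You instead run the Eklof--Trlifaj machinery inside $\sC(\sK)$: by Lemma~\ref{complexes-deconstructed} the class $\sC(\sK_\proj^\fp)$ is the left class of a complete cotorsion pair $(\sC(\sK_\proj^\fp),\mathsf{W})$ generated by the set of bounded below complexes of finitely presentable objects, and a special precover sequence $0\rarrow W^\bu\rarrow A^\bu\rarrow K^\bu\rarrow 0$ does the job once one knows that every complex in $\mathsf{W}$ is acyclic. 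This is a viable and arguably more self-contained argument: it uses only tools the paper has already set up and avoids Neeman's Proposition~8.1.

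However, the step you yourself call the heart of the matter, namely $\mathsf{W}\subset\Ac(\sK)$, is not actually established by what you write. From ``$\mathsf{W}$ contains the class $\sE$ of acyclic complexes of fp\+injectives with fp\+injective cocycles'' and ``$\sC(\sK_\proj^\fp)={}^{\perp_1}\sE$'' one only gets $\mathsf{W}=({}^{\perp_1}\sE)^{\perp_1}\supset\sE$; concluding $\mathsf{W}=\sE$, or even $\mathsf{W}\subset\Ac(\sK)$, would additionally require knowing that $\sE$ is itself the right class of a cotorsion pair in $\sC(\sK)$, which is a theorem of Gillespie and not a reformulation of Theorem~\ref{lfp-category-hot-hom-fp-proj-fp-inj-thm}. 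Fortunately the inclusion you need is elementary and does not use Theorem~\ref{lfp-category-hot-hom-fp-proj-fp-inj-thm} at all: besides the disk complexes (which, as you say, force the terms $W^n$ to be fp\+injective), the left class contains the stalk complexes $T[-n]$ for $T\in\sK_\fp$; by Lemma~\ref{complexes-homotopy-hom-ext-lemma} the vanishing of $\Ext^1_{\sC(\sK)}(T[-n],W^\bu)\simeq\Hom_{\Hot(\sK)}(T[-n],W^\bu[1])$ says that every morphism from $T$ to the object of cocycles $\ker(W^{n+1}\to W^{n+2})$ lifts to $W^n$, and since the finitely presentable objects generate $\sK$ this forces $W^n\rarrow\ker(W^{n+1}\to W^{n+2})$ to be an epimorphism for every~$n$, i.e., $W^\bu$ is acyclic. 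With that substitution your main argument is complete. The parenthetical ``more elementary route,'' on the other hand, does not work as described: fp\+projectivity is not preserved by directed colimits (every object of $\sK$ is a directed colimit of finitely presentable ones), and the Eklof lemma governs transfinitely iterated extensions, not arbitrary colimits of chains; this failure is precisely why the periodicity theorem of the paper has content.
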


\begin{proof}
 Once again, the argument is that the assumption of
Lemma~\ref{subcategory-localization-lemma} is satisfied.
 Similarly to the proof of
Proposition~\ref{loc-pres-enough-proj-abelian-category-localization},
there are several possible constructions.

 One approach is to choose $A^\bu$ as a complex of pure-projective
objects in $\sK$ and the morphism $A^\bu\rarrow K^\bu$ as a pure
quasi-isomorphism (i.~e., a morphism of complexes with pure acyclic
cone).
 To see that this can be done, interpret $\sK$ as the full subcategory
of flat modules in $\Modr\cR$, as per
Lemma~\ref{pure-exact-structure-as-flat-modules}.
 Then the pure-projective objects of $\sK$ correspond to the projective
objects of $\Modr\cR$.
 The point is that there are enough complexes of projective
modules/objects in the exact category of flat modules $\Modrfl\cR$:
any complex in $\Modrfl\cR$ is quasi-isomorphic, \emph{as a complex
in\/ $\Modrfl\cR$}, to a complex of projectives.
 This is another result of Neeman's paper~\cite{Neem};
see~\cite[Proposition~8.1 and Theorem~8.6]{Neem}.

 Alternatively, one can use a suitable version of the construction
of homotopy projective resolutions in~\cite[Section~3.A]{Spal}.
 This allows to produce a (nonpure) quasi-isomorphism $A^\bu
\rarrow K^\bu$ in $\sK$, where $A^\bu$ belongs to the minimal full
triangulated subcategory of $\Hot(\sK)$ containing the one-term
complexes formed from fp\+projective objects (or even from
pure-projective objects) and closed under countable coproducts.
 It is enough to know that the complexes in $\sK$ admit canonical
truncations, every object is a quotient object of an fp\+projective
(or even of a pure-projective) object, and countable coproducts are
exact in\/~$\sK$.
\end{proof}

\begin{cor}
 Let\/ $\sK$ be a locally coherent abelian category, and let\/
$\sK_\proj^\fp$ denote the full subcategory of fp\+projective objects
in\/ $\sK$, endowed with the inherited exact category structure.
 Then the inclusion of exact/abelian categories\/ $\sK_\proj^\fp
\rarrow\sK$ induces an equivalence of their unbounded derived
categories,
$$
 \sD(\sK_\proj^\fp)\overset\simeq\lrarrow\sD(\sK).
$$
\end{cor}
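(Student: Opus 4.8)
The plan is to deduce this from Proposition~\ref{locally-fin-pres-abelian-fp-proj-localization} together with the fp\+projective periodicity theorem. Recall that $\sD(\sK_\proj^\fp)$, the derived category of the exact category $\sK_\proj^\fp$ (with the exact structure inherited from~$\sK$), is by definition the Verdier quotient $\Hot(\sK_\proj^\fp)/\Ac(\sK_\proj^\fp)$, where $\Ac(\sK_\proj^\fp)\subset\Hot(\sK_\proj^\fp)$ denotes the triangulated subcategory of complexes that are acyclic \emph{with respect to the exact structure on} $\sK_\proj^\fp$. On the other hand, Proposition~\ref{locally-fin-pres-abelian-fp-proj-localization} provides a triangulated equivalence
$$
 \frac{\Hot(\sK_\proj^\fp)}{\Hot(\sK_\proj^\fp)\cap\Ac(\sK)}\overset\simeq\lrarrow\sD(\sK).
$$
So the whole task reduces to establishing the equality of triangulated subcategories of $\Hot(\sK_\proj^\fp)$
$$
 \Ac(\sK_\proj^\fp)=\Hot(\sK_\proj^\fp)\cap\Ac(\sK).
$$
Once this is done, the two Verdier quotients literally coincide, and the resulting equivalence $\sD(\sK_\proj^\fp)\overset\simeq\lrarrow\sD(\sK)$ is visibly the one induced by the inclusion $\sK_\proj^\fp\rarrow\sK$.

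The inclusion $\Ac(\sK_\proj^\fp)\subseteq\Hot(\sK_\proj^\fp)\cap\Ac(\sK)$ holds in any locally finitely presentable abelian category and does not use local coherence: a complex acyclic in the exact category $\sK_\proj^\fp$ is a splice of conflations in $\sK_\proj^\fp$, and since $\sK_\proj^\fp$ is closed under extensions in~$\sK$, these conflations are honest short exact sequences in the abelian category $\sK$, so the complex is acyclic in~$\sK$. (This is, of course, exactly the observation that makes the comparison functor $\sD(\sK_\proj^\fp)\rarrow\sD(\sK)$ well-defined in the first place.)

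For the reverse inclusion, I would take a complex $P^\bu$ of fp\+projective objects which is acyclic as a complex in the abelian category~$\sK$, and let $Z^n\in\sK$ denote its objects of cocycles. Since $\sK$ is locally coherent, Corollary~\ref{lfp-category-cocycles-fp-proj-iff-loc-coherent-cor} (via the implication (4)\,$\Longrightarrow$\,(1), which rests on Theorem~\ref{lfp-category-cycles-in-fp-projective-thm} and Corollary~\ref{two-fp-cotors-pairs-agree-equiv-loc-coherent}) guarantees that every $Z^n$ is fp\+projective. Hence each short exact sequence $0\rarrow Z^n\rarrow P^n\rarrow Z^{n+1}\rarrow0$ in $\sK$ has all three terms in $\sK_\proj^\fp$, i.e.\ is a conflation in the exact category $\sK_\proj^\fp$; splicing these conflations exhibits $P^\bu$ as an object of $\Ac(\sK_\proj^\fp)$. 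This completes the required equality, and the corollary follows.

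The only nonformal ingredient in this argument is the fp\+projective periodicity statement, which enters precisely through the hypothesis of local coherence and is already available; everything else is a bookkeeping of Verdier quotients. Accordingly I do not anticipate a genuine obstacle here — the content of the corollary is entirely carried by Theorem~\ref{lfp-category-cycles-in-fp-projective-thm} and Proposition~\ref{locally-fin-pres-abelian-fp-proj-localization}. One may also note in passing that $\sK_\proj^\fp$ is idempotent\+complete, being the left-hand class of a cotorsion pair and hence closed under direct summands, so that $\sD(\sK_\proj^\fp)$ is well behaved; but the reduction above does not actually use this.
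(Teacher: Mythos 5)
Your proposal is correct and follows essentially the same route as the paper: the paper's proof is the one-line instruction to compare Proposition~\ref{locally-fin-pres-abelian-fp-proj-localization} with Theorem~\ref{lfp-category-cycles-in-fp-projective-thm} or Corollary~\ref{lfp-category-cocycles-fp-proj-iff-loc-coherent-cor}, and your argument simply spells out that comparison, namely the identification $\Ac(\sK_\proj^\fp)=\Hot(\sK_\proj^\fp)\cap\Ac(\sK)$ via the fp\+projectivity of the cocycle objects in the locally coherent case.
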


\begin{proof}
 Compare Proposition~\ref{locally-fin-pres-abelian-fp-proj-localization}
with Theorem~\ref{lfp-category-cycles-in-fp-projective-thm} or
Corollary~\ref{lfp-category-cocycles-fp-proj-iff-loc-coherent-cor}.
\end{proof}

\Section{Failure of Non-Pure Pure-Projective Periodicity}
\label{counterexamples-secn}

 Simson's theorem~\cite[Theorem~1.3 or~4.4]{Sim}
(Theorem~\ref{simson-pure-proj-periodic} in the introduction) tells
that any pure $\PProj$\+periodic module is pure-projective.
 Our Corollary~\ref{modules-weakly-fp-periodicity-cor} tells that any
$\FpProj$\+periodic module is weakly fp\+projective.
 The aim of this section is to present a variety of counterexamples
showing that a (non-pure) $\PProj$\+periodic module need \emph{not} be
pure-projective.

 Acyclic complexes of pure-projective modules were considered
in the papers~\cite{Gil2,EK}.
 Our main example is a four-term exact sequence of
pure-projective modules (over the Kronecker algebra, or over
a finite-dimensional commutative algebra, or over the algebra of
polynomials in two variables over a field) with a non-pure-projective
middle module of cocycles.
 In fact, even a $\Proj$\+periodic module over a finite-dimensional
commutative algebra over a field need not be pure-projective, as
we will see.

 We start with a simple noncoherent example.

\begin{ex} \label{noncoherent-counterex}
 The following example shows that a $\Proj$\+periodic module over
a noncoherent ring need not be fp\+projective (though it is of course
weakly fp\+projective by
Corollary~\ref{modules-weakly-fp-periodicity-cor}).
 We recall that $\Proj$ denotes the class of all projective
$R$\+modules.

 Let $V$ be a vector space of infinite countable dimension over
a field~$k$ and $R=k\oplus V$ be the trivial extension algebra,
where the basis vector of $k$ is a unit in $R$, while
the multiplication on~$V$ is zero.
 Let us say that an $R$\+module is \emph{trivial} if $V$ acts by zero
in it.
 Then we have a short exact sequence of $R$\+modules $0\rarrow V
\rarrow R\rarrow k\rarrow0$, where $R$ is the free $R$\+module with
one generator, while $V$ and~$k$ are endowed with trivial $R$\+module
structures.
 Taking the direct sum of a countable number of copies of this short
exact sequence, we obtain a short exact sequence of $R$\+modules
$$
 0\lrarrow V^{(\aleph_0)}\lrarrow R^{(\aleph_0)}\lrarrow
 k^{(\aleph_0)}\lrarrow0.
$$
 As the trivial $R$\+modules $V^{(\aleph_0)}$ and $k^{(\aleph_0)}$ are
isomorphic, we see that they are $\Proj$\+periodic over~$R$.
 Still the trivial $R$\+module~$k$ is \emph{not} fp\+projective,
since it is finitely generated but not finitely presented
(see Corollary~\ref{fp-projective-fin-generated-cor}(a)).
 Consequently, the trivial $R$\+module $V^{(\aleph_0)}\simeq
k^{(\aleph_0)}$ is \emph{not} fp\+projective, either (as the class
of all fp\+projective modules is closed under direct summands).
\end{ex}

 In the rest of this section, most of our counterexamples are based on
the following example of a non-pure-projective module.
 Denote by $K$ the Kronecker algebra over a field~$k$, described
explicitly as follows.
 A basis in $K$ as a $k$\+vector space consists of four vectors~$e_0$,
$e_1$, $x$, and~$y$.
 The multiplication is given by the rules $e_0^2=e_0$, \,$e_1^2=e_1$,
$e_0x=x$, \,$e_0y=y$, \,$xe_1=x$, \,$ye_1=y$, all the other products
of basis vectors are zero.
 The unit element is $1=e_0+e_1\in K$.

 In the discussion below, the action of the idempotent elements $e_0$
and~$e_1$ is an extra.
 It is a piece of additional information, which allows us to give
an example of a non-pure-projective $\PProj$\+periodic module over
a hereditary finite-dimensional algebra $K$ in
Example~\ref{example-Kronecker-algebra}.
 A reader more interested in commutative algebra counterexamples
(such as Examples~\ref{example-3dim-algebra},
\ref{example-polynomial-algebra}, and~\ref{example-4dim-frob})
will lose little by skipping all mentions of the action of these
idempotent elements in our modules.

 Let $M$ denote the following right $K$\+module.
 A basis in $M$ as a $k$\+vector space consists of vectors $v_{i,j}$,
where $i$, $j\in\boZ$ and $i+j=0$ or $i+j=1$.
 The action of $K$ is described by the rules
\begin{itemize}
\item $v_{i,j}e_0=v_{i,j}$ if $i+j=0$, and $v_{i,j}e_0=0$ if $i+j=1$;
\item $v_{i,j}e_1=0$ if $i+j=0$, and $v_{i,j}e_1=v_{i,j}$ if $i+j=1$;
\item $v_{i,j}x=v_{i+1,j}$ if $i+j=0$, and $v_{i,j}x=0$ if $i+j=1$;
\item $v_{i,j}y=v_{i,j+1}$ if $i+j=0$, and $v_{i,j}y=0$ if $i+j=1$.
\end{itemize}

\begin{lem} \label{not-pure-projective}
 Let $R$ be an associative ring and $R\rarrow K$ be ring homomorphism
whose image contains the elements $x$ and $y\in K$.
 Let us view $M$ as a right $R$\+module via the restriction of scalars.
 Then the $R$\+module $M$ is \emph{not} pure-projective.
\end{lem}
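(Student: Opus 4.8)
The plan is to assume that $M$ is pure-projective over $R$ and to derive a contradiction from the explicit form of $M$, using only one external input: every pure-projective module is a direct sum of finitely presented modules (Warfield). Thus we may write $M=\bigoplus_{\mu}H_\mu$ with each $H_\mu$ finitely generated over~$R$. Write $a_i:=v_{i,-i}$ and $b_i:=v_{i,1-i}$; then $a_ix=b_{i+1}$, $a_iy=b_i$, $b_ix=b_iy=0$, and right multiplication by $x$ and by $y$ are $k$-linear endomorphisms of $M$ (since $k$ is central in $K$).

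First I would analyse the submodule $N:=Mx\subseteq M$. A direct computation gives $Mx=My=N=\bigoplus_i kb_i$, shows that $M/N$ has $k$-basis $\{\bar a_i\}_{i\in\boZ}$ (so $M/N\ne0$), that $x$ and $y$ act by zero on $N$ and on $M/N$, and that right multiplication by $x$, resp.\ by $y$, induces a $k$-linear isomorphism $\bar x\colon M/N\overset{\sim}{\rarrow}N$, $\bar a_i\mapsto b_{i+1}$, resp.\ $\bar y\colon M/N\overset{\sim}{\rarrow}N$, $\bar a_i\mapsto b_i$. Hence $\sigma:=\bar y^{-1}\bar x$ is the shift automorphism $\bar a_i\mapsto\bar a_{i+1}$ of $M/N$. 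Finally, since $x$, $y$ and the idempotent $e_1$ act by zero on $M/N$ while $e_0$ acts as the identity, $R$ acts on $M/N$ through a subring $\bar R\subseteq k$ (namely the image of $R\rarrow K\rarrow K/\mathrm{rad}(K)\to k$), by scalar multiplication, and $\sigma$ is $\bar R$-linear.

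Next I would push the decomposition through to $M/N$. Because $x$ and $y$ lie in the image of $R\rarrow K$, each $H_\mu$ is stable under right multiplication by $x$ and by $y$; combined with $M=\bigoplus_\mu H_\mu$ and $N=Mx=My$ this yields $H_\mu x=H_\mu y=H_\mu\cap N=:N_\mu$, so that $\bar H_\mu:=H_\mu/N_\mu$ embeds into $M/N$, $M/N=\bigoplus_\mu\bar H_\mu$, the maps $\bar x$ and $\bar y$ restrict to isomorphisms $\bar H_\mu\overset{\sim}{\rarrow}N_\mu$, and consequently $\sigma(\bar H_\mu)=\bar H_\mu$. Now $\bar H_\mu$ is finitely generated over $\bar R\subseteq k$, hence, picking finitely many generators (each a finite $k$-combination of the $\bar a_i$), $\bar H_\mu\subseteq\bigoplus_{i\in I_\mu}k\bar a_i$ for a finite set $I_\mu\subseteq\boZ$. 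Since $\sigma(\bar H_\mu)=\bar H_\mu$, this gives
$$
\bar H_\mu\subseteq\bigcap_{m\in\boZ}\sigma^m\Bigl(\bigoplus_{i\in I_\mu}k\bar a_i\Bigr)=\bigcap_{m\in\boZ}\Bigl(\bigoplus_{i\in I_\mu+m}k\bar a_i\Bigr)=0,
$$
because the translates $I_\mu+m$ of the finite set $I_\mu$ have empty intersection. Hence every $\bar H_\mu=0$, so $M/N=0$, contradicting $M/N\ne0$.

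I expect that the main subtlety here is not any single hard step but the placement of the hypothesis: the decomposition $M=\bigoplus_\mu H_\mu$ descends to a $\sigma$-stable decomposition of $M/N$ precisely because each $H_\mu$ is stable under right multiplication by $x$ and $y$, which is exactly what the assumption ``the image of $R\rarrow K$ contains $x$ and $y$'' provides (the action of the remaining generators $e_0$, $e_1$ of $K$ is never used). The only result invoked from outside is the structure theorem for pure-projective modules; if one prefers to avoid it, the same conclusion follows from the weaker statement that a pure-projective module is a direct summand of a direct sum of finitely presented modules, after extracting a genuine direct-sum decomposition into finitely generated pieces by a Kaplansky-type argument.
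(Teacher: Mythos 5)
Your reduction to the shift-invariance computation on $M/Mx$ is attractive and the combinatorial core (the identities $Mx=My=N$, the shift $\sigma=\bar y^{-1}\bar x$, and the vanishing of finite-dimensional $\sigma$-invariant subspaces of $\bigoplus_i k\bar a_i$) is correct, but the opening step is a genuine gap. Over an arbitrary ring, Warfield's theorem says only that a pure-projective module is a \emph{direct summand of} a direct sum of finitely presented modules; the stronger claim that every pure-projective module is itself a direct sum of finitely presented (or even finitely generated) modules is not Warfield's theorem and is not known in this generality --- it is a well-known problem, settled only for special classes of rings (e.g.\ finite-dimensional algebras, via the Krull--Remak--Schmidt--Azumaya exchange property, or valuation domains, where the paper itself has to cite a specific decomposition theorem in its last example). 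Since the lemma is stated for an arbitrary associative ring $R$ mapping to $K$ --- including $R=k[x,y]$ --- you cannot assume the decomposition $M=\bigoplus_\mu H_\mu$ with $H_\mu$ finitely generated. The fallback you propose does not repair this: a Kaplansky-type argument only extracts \emph{countably generated} direct summands, and since $M$ has a countable $k$-basis it is itself countably generated, so that refinement is vacuous here; indeed $M/N$ is a countably generated $\sigma$-invariant subspace, and your intersection argument has no force for countably generated pieces.

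By contrast, the paper's proof needs no structure theory for pure-projective modules at all: it writes $M=\varinjlim_n M_n$ as the union of an increasing chain of finite-dimensional submodules, forms the telescope pure exact sequence $0\to\bigoplus_n M_n\to\bigoplus_n M_n\to M\to 0$, and shows by an explicit induction (using the injectivity of multiplication by $x$ and by $y$ on $M_n$ modulo $M_nx+M_ny$) that this particular pure epimorphism onto $M$ does not split; a pure-projective module would split it. If you want to keep your approach, you must either restrict the lemma to rings over which the decomposition theorem for pure-projectives is actually available (which would still cover the finite-dimensional-algebra examples but not the stated generality), or rework the shift argument so that it applies to a splitting $M\to\bigoplus_\mu G_\mu$ into finitely presented modules having $M$ only as a direct summand --- which is essentially what the paper's telescope computation accomplishes.
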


\begin{proof}
 For every integer $n\ge0$, denote by $M_n\subset M$ the $k$\+vector
subspace spanned by the basis vectors $v_{i,j}$, where $-n\le i\le n+1$
and $-n\le j\le n+1$ (while $i+j=0$ or~$1$, of course).
 So $M_n$ is a finite-dimensional vector subspace of dimension $4n+3$
in~$M$.
 Clearly, $M_n$ is a $K$\+submodule in~$M$.
 One has $M=\varinjlim_{n\ge0}M_n$, so there is a pure exact sequence
of right $K$\+modules (and also of right $R$\+modules)
\begin{equation} \label{module-telescope}
 0\lrarrow\bigoplus\nolimits_{n=0}^\infty M_n\lrarrow
 \bigoplus\nolimits_{n=0}^\infty M_n\lrarrow M\lrarrow0.
\end{equation}
 In order to show that $M$ is not a pure-projective $R$\+module, it
suffices to check that the short exact sequence of
$R$\+modules~\eqref{module-telescope} is not split.

 Put $S=\bigoplus_{n=0}^\infty M_n$ and $S_m=\bigoplus_{n=0}^m M_n
\subset S$ for $m\ge0$.
 For the sake of contradition, assume that $s\:M\rarrow S$ is
an $R$\+linear splitting of~\eqref{module-telescope}.
 Then there exists an integer $m\ge0$ such that $s(v_{0,0})\in S_m$.

 Arguing by induction, we will show that $s(v_{-n,n})\in
Sx+Sy+S_m$ and similarly $s(v_{n,-n})\in Sx+Sy+S_m\subset S$
for all $n\ge0$.
 For $n>m$, this will clearly contradict the assumption that
$s$~is a section of~\eqref{module-telescope} (notice that
$Mx=My=Me_1$ is a submodule in $M$ \emph{not} containing
the basis vectors $v_{-n,n}$ and~$v_{n,-n}$).

 The key observation is that both the maps
$$
 M_n/(M_nx+M_ny)\overset x\lrarrow M_n
 \quad\text{and}\quad
 M_n/(M_nx+M_ny)\overset y\lrarrow M_n
$$
are injective for all $n\ge0$.
 By the induction assumption, we have $s(v_{-n+1,n-1})\in Sx+Sy+S_m$
for some $n\ge1$.
 Hence $s(v_{-n+1,n})=s(v_{-n+1,n-1})y\in S_m$ and therefore
$s(v_{-n,n})x=s(v_{-n+1,n})\in S_m$.
 Since the map
$$
 S/(Sx+Sy+S_m)\overset x\lrarrow S/S_m
$$
is injective, it follows that $s(v_{-n,n})\in Sx+Sy+S_m$.
\end{proof}

 Now let us construct the promised four-term exact sequence.
 For every $i\in\boZ$, let $L_i\subset M$ be the $k$\+vector subspace
spanned by the three basis vectors $v_{-i,i}$,
$v_{-i+1,i}$,~$v_{-i,i+1}$.
 Clearly, $L_i$ is a $3$\+dimensional $K$\+submodule in $M$ and
$M=\sum_{i\in\boZ}L_i$.
 (In fact, one has $M_n=\sum_{i=-n}^nL_i$.)
 All the $K$\+modules $L_i$ are isomorphic to each other, so we can
put $L=L_i$.
 The kernel of the morphism $\bigoplus_{i\in\boZ} L_i\rarrow M$ is
the direct sum of a countable number of copies of the one-dimensional
$K$\+module $E=k$ with $Ex=Ey=Ee_0=0$ and $e_1$~acting in $E$ by
the identity map.

 For every $i\in\boZ$, let $Q_i\subset M$ be the $k$\+vector subspace
spanned by \emph{all} the basis vectors \emph{except} $v_{-i,i}$,
$v_{-i,i+1}$, and $v_{-i-1,i+1}$.
 Then $Q_i$ is a $K$\+submodule in $M$ with a $3$\+dimensional
quotient module $N_i=M/Q_i$.
 All the $K$\+modules $N_i$ are isomorphic to each other, so we can
put $N=N_i$.
 Furthermore, for any element $w\in M$ one has $w\in Q_i$ for all
but a finite set of integers~$i$.
 So there is a natural injective $K$\+module morphism $M\rarrow
\bigoplus_{i\in\boZ} N_i$.
 The cokernel of this morphism is isomorphic to the direct sum of
a countable number of copies of the one-dimensional $K$\+module $F=k$
with $Fx=Fy=Fe_1=0$ and $e_0$~acting in $F$ by the identity map.
 
 Thus we obtain a four-term exact sequence of $K$\+modules
\begin{equation} \label{four-term-sequence}
 0\lrarrow E^{(\aleph_0)}\lrarrow L^{(\aleph_0)}
 \lrarrow N^{(\aleph_0)}\lrarrow F^{(\aleph_0)}\lrarrow0
\end{equation}
with the middle module of cocycles equal to~$M$.
 The $K$\+modules $E$ and $F$ are one-dimensional, while
the $K$\+modules $L$ and $N$ are three-dimensional over~$k$.

\begin{ex} \label{example-Kronecker-algebra}
 The four-term exact sequence~\eqref{four-term-sequence} is
a (finite) acyclic complex of pure-projective $K$\+modules whose
middle module of cocycles $M$ is \emph{not} pure-projective
by Lemma~\ref{not-pure-projective}.
 Thus a $\PProj$\+periodic module over the hereditary finite-dimensional
algebra $K$ need \emph{not} be pure-projective.
\end{ex}

\begin{ex} \label{example-3dim-algebra}
 Let $R$ be the unital $k$\+subalgebra in $K$ spanned by the elements
$x$ and~$y$.
 So $R$ is a $3$\+dimensional commutative $k$\+algebra isomorphic to
$k[x,y]/(x^2,xy,y^2)$.
 Viewed as $R$\+modules, $L$ is a free $R$\+module with one generator,
$L\simeq R$, while $N$ is a cofree $R$\+module with one cogenerator,
$N\simeq R^*=\Hom_k(R,k)$ (so $N$ is an injective $R$\+module).
 The $R$\+modules $E$ and $F$ are isomorphic, of course (in fact,
there is a unique simple $R$\+module~$k$).

 So we have a four-term exact sequence of $R$\+modules
\begin{equation} \label{commutative-four-term-sequence}
 0\lrarrow k^{(\aleph_0)}\lrarrow R^{(\aleph_0)}
 \lrarrow R^*{}^{(\aleph_0)}\lrarrow k^{(\aleph_0)}\lrarrow0
\end{equation}
whose middle module of cocycles $M$ is \emph{not} pure-projective
by Lemma~\ref{not-pure-projective}.
 Thus a $\PProj$\+periodic module over the finite-dimensional
commutative algebra $R$ need \emph{not} be pure-projective.
\end{ex}

\begin{ex} \label{example-polynomial-algebra}
 Let $R'=k[x,y]$ be the commutative algebra of polynomials in two
variables over a field~$k$.
 Taking the restriction of scalars with respect to the obvious
surjective $k$\+algebra morphism $R'\rarrow R$, with $R$ as in
Example~\ref{example-3dim-algebra}, one can
view~\eqref{commutative-four-term-sequence} as a four-term exact
sequence of pure-projective $R'$\+modules.
 (Notice that the algebra $R'$ is Noetherian, so any finitely
generated $R'$\+module is finitely presented.)

 The $R'$\+module $M$ is \emph{not} pure-projective
by Lemma~\ref{not-pure-projective}.
 Thus a $\PProj$\+periodic module over the regular finitely generated
commutative $k$\+algebra $R'=k[x,y]$ need \emph{not} be pure-projective.
\end{ex}

\begin{ex}\label{example-4dim-frob}
 Put $R''=k[x,y]/(x^2,y^2)$; so $R''$ is a $4$\+dimensional Frobenius
commutative $k$\+algebra.
 As such, any $R''$\+module has a double-sided projective-injective
$R''$\+module resolution.
 In other words, any $R''$\+module can be obtained as the module of
cocycles (in some particular cohomological degree) of an unbounded
acyclic complex of projective-injective $R''$\+modules.

 Taking the restriction of scalars with respect to the obvious
surjective $k$\+algebra morphism $R''\rarrow R$, one can view $M$
as an $R''$\+module.
 The $R''$\+module $M$ is \emph{not} pure-projective by
Lemma~\ref{not-pure-projective}.
 Still, it can be obtained as a module of cocycles in an acyclic
complex of projective $R''$\+modules.
 Thus a $\Proj$\+periodic module over the finite-dimensional
commutative $k$\+algebra $R''$ need \emph{not} be pure-projective.
\end{ex}

\begin{ex}\label{example-frob}
More generally, let $S$ be any quasi-Frobenius ring which is not right pure semisimple. The same argument as in Example~\ref{example-4dim-frob} applies and shows that every $S$-module is a cocycle in an unbounded acyclic complex of projective-injective $S$\+modules. The fact that $S$ is not right pure semisimple amounts to the existence of a right $S$-module which is not pure-projective. It follows that $S$ admits a right $\Proj$\+periodic module which is not pure-projective. 

A commutative ring is pure semisimple if and only if it is an artinian principal ideal ring, see \cite[\S 1.2]{W}. Also, a commutative artinian local ring is a principal ideal ring if and only if it is a hypersurface, this follows directly from \cite[Corollary 11]{H}. Therefore, any commutative artinian local Gorenstein ring which is not a hypersurface can play the role of $S$, this includes the ring of Example~\ref{example-4dim-frob}.
\end{ex}

\begin{ex}
Even more generally, let $S$ be a ring which admits a right $S$-module $M$ which is Gorenstein projective but not pure-projective (see e.g. \cite[\S 3]{SarSt} for the definition of a Gorenstein projective module). By definition, $M$ is a cocycle in an acyclic complex of projective right $S$-modules, and therefore there is a $\Proj$\+periodic right $S$-module which is not pure-projective.

A source of such rings $S$ can be obtained as follows. Let $S$ be a commutative noetherian complete local Gorenstein ring which is not regular. A result of Beligiannis \cite[Theorem 4.20]{B} asserts that all Gorenstein projective $S$-modules are pure-projective if and only if $S$ is CM-finite, the latter means that the category of Gorenstein projective $S$-modules is of finite representation type. Note that the proof of \cite[Theorem 4.20]{B} explicitly uses the fact that an $S$-module is pure-projective precisely if it is isomorphic to a direct sum of finitely presented $S$-modules. By \cite[Corollary 4.21]{B}, the CM-finiteness implies that $S$ is a simple hypersurface in the sense of \cite{BGS}. In particular, any commutative noetherian complete local Gorenstein ring which is not a hypersurface can play the role of $S$.
\end{ex}

\begin{ex}
 This is an example of non-pure-projective $\PProj$\+periodic module
over a valuation domain.
 Over a Pr\"ufer domain, the class of fp\+injective modules coincides
with the class $\sD$ of divisible
modules~\cite[Proposition~IX.3.4]{FS}.
 If $\sP_1$ is the class of modules of projective dimension at most one,
then for every commutative domain, $(\sP_1, \sD)$ is a complete
hereditary  cotorsion pair~\cite[Theorem~7.2]{BH}.
 Thus, for Pr\"ufer domains the class $\sP_1$ coincides with the class
of fp\+projective modules.

 Let $R$ be a valuation domain with value group the abelian group
$\boZ\oplus\boZ$ with the anti-lexicographic order.
 The maximal ideal of $R$ is principal generated by an element~$r_0$
with value $(1,0)$ and $\bigcap_{n\geq 0} r_0^nR$ is a prime ideal
$\mathfrak p$ generated by elements $s_n$, \,$n\geq 0$ with
value $(-n,1)$.
 One can choose the elements~$s_n$ so that $s_{n+1}r_0=s_n$ for every
$n\geq 0$.

 Let $P$ be the pure-projective module $\bigoplus_{n\geq 0} R/s_nR$
and let $e_n=1+s_nR$ be the basis elements in~$P$.
 Consider the submodule $M$ of $P$ generated by the elements 
$x_n=(e_0+e_1+\dotsb+e_{n-1})s_n$ for every $n\geq 1$.

 For every $n\geq 1$, one has $x_{n+1}r_0=x_n$, thus
$M=\bigcup_{n\geq 1}x_nR$ and $M$ is isomorphic to
$\bigcup_{n\geq 1}\frac{r_0^{-n}R}R\subset\frac QR$, where $Q$ is
the quotient field of $R$. 
 This shows that $M$ has projective dimension one, hence it is
fp\+projective, but it is not pure-projective. Indeed, $M$ is uniserial
(see~\cite[p.~24 in Section~I.4]{FS} or~\cite[Chapter~X]{FS}), and
the only modules over $R$ which are both uniserial and pure-projective
are the cyclically presented ones.
 The latter assertion holds because any pure-projective $R$\+module is
a direct summand of a direct sum of cyclically presented ones
(see~\cite[Theorem~1]{W0} or~\cite[Theorem~V.3.3]{FS}), and any such
direct summand is itself a direct sum of cyclically presented
modules (by~\cite[Theorem I.9.8]{FS}).

 The claim is that $P/M$ is again a pure-projective module, so that  
considering a pure-projective resolution $P_\bu$ of $M$, the complex
$P_\bu\rarrow P\rarrow P/M\rarrow 0$ with pure-projective terms has $M$
as a cocycle.
 Let us explain why this is the case.
 
 For every $n\geq 1$, write $P=C_n\oplus P_n$, where
$C_n=\bigoplus_{0\leq i\leq n-1} R/s_iR$ and
$P_n=\bigoplus_{i\geq n}R/s_iR$.
 Now the quotient module $P/M$ is isomorphic to
$\varinjlim_{n\geq 1} P/x_nR$, and since $x_n\in C_n$, we have that
$P/M$ is isomorphic to $\varinjlim_{n\geq 1} C_n/x_nR$.
 Here the transition morphisms $\pi_n\: C_n/x_nR\rarrow
C_{n+1}/x_{n+1}R$ are given by the obvious rule $\pi_n(e_i+x_n R)=
e_i+x_{n+1}R$ for all $0\le i\le n-1$.

 Put $f_1=e_0$, $f_2=e_0+e_1$, \dots, $f_j=\sum_{i=0}^{j-1}e_i$
for all $j\in\boZ$.
 The $R$\+module $C_n$ is generated by the elements $e_0$,~\dots,
$e_{n-1}$ with the relations $e_is_i=0$ for all $0\le i\le n-1$.
 Changing the basis $\{e_0, e_1,\dotsc,e_{n-1}\}$ of $C_n$ 
to the basis $\{f_1,f_2,\dotsc,f_n\}$, we see that $C_n$ is
generated by the elements $f_1$,~\dots, $f_n$ subject to the relations
$f_1s_0=0$, \,$(f_2-f_1)s_1=0$, \dots, $(f_{n-1}-f_{n-2})s_{n-2}=0$,
\,$(f_n-f_{n-1})s_{n-1}=0$.

 Let us compute the $R$\+module $D_n=C_n/x_nR$.
 Put $\bar f_i=f_i+x_nR\in D_n$ for all $1\le i\le n$.
 We have $x_n=f_ns_n$; so the additional relation $\bar f_ns_n=0$ needs
to be imposed on top of the relations in $C_n$ in order to
construct~$D_n$.
 Now $\bar f_ns_n=0$ implies $\bar f_ns_{n-1}=0$, so the relation
$(\bar f_n-\bar f_{n-1})s_{n-1}=0$ can be rewritten simply as
$\bar f_{n-1}s_{n-1}=0$.
 This, in turn, implies $\bar f_{n-1}s_{n-2}=0$, so
$(\bar f_{n-1}-\bar f_{n-2})s_{n-2}=0$ means simply
$\bar f_{n-2}s_{n-2}=0$, etc.
 Proceeding in this way, we see that the $R$\+module $D_n$ is
generated by the elements~$\bar f_1$,~\dots, $\bar f_n$ subject to
the relations $\bar f_is_i=0$ for all $1\le i\le n$. 
 (At the last step, we conclude that $(\bar f_2-\bar f_1)s_1=0$ is
equivalent to $\bar f_1s_1=0$ modulo the previous relations, and
$\bar f_1s_0=0$ is redundant.)

 We have shown that $D_n=R/s_1R\oplus R/s_2R\oplus\dotsb\oplus R/s_nR$.
 Computing the morphism $\pi_n\:D_n\rarrow D_{n+1}$ in the new basis,
one finds that it is still given by the obvious rule
$\pi_n(\bar f_i)=\bar f_i$ for all $1\le i\le n$.
 So $\pi_n$~is a split monomorphism with the cokernel $R/s_{n+1}R$.
 Hence $P/M$ is isomorphic to $R/s_1R\oplus R/s_2R\oplus\dotsb
\oplus R/s_nR\oplus\dotsb$ and thus it is pure-projective.
\end{ex}

\bigskip

\end{document}